\documentclass[a4paper,11pt]{article}
\usepackage{amsfonts,amsthm,amsmath,amssymb,stmaryrd}
\usepackage{enumitem}
\usepackage{mathtools}
\usepackage{tikz}
\usepackage{tikz-cd}
\usepackage{graphicx}
\usepackage{color}

\definecolor{mycitecolor}{rgb}{0,0,0.8}
\definecolor{myrefcolor}{rgb}{0,0,0.8}

\usepackage[bookmarks=true]{hyperref}
\hypersetup{colorlinks, linkcolor=myrefcolor, citecolor=mycitecolor} 

\usepackage{url}

\newcommand{\btk}{\begin{tikzcd}}
\newcommand{\etk}{\end{tikzcd}}
\newcommand{\bc}{\begin{center}}
\newcommand{\ec}{\end{center}}

\newcommand{\Set}{\operatorname{\mathsf{Set}}}
\newcommand{\Cat}{\operatorname{\mathsf{Cat}}}
\newcommand{\FinSet}{\operatorname{\mathsf{FinSet}}}

\newcommand{\FactSys}{\operatorname{\mathsf{FactSys}}}
\newcommand{\Data}{\operatorname{\mathsf{DecData}}}

\newcommand{\Rel}{\operatorname{\mathsf{Rel}}}
\newcommand{\LinRel}{\operatorname{\mathsf{LinRel}}}
\newcommand{\CospanAlg}{\operatorname{\mathsf{CospanAlg}_\mathsf{LFS}}}
\newcommand{\calg}{\operatorname{\mathsf{CospanAlg}}}
\newcommand{\HypOF}{\operatorname{\mathsf{Hyp}_{OF}}}
\newcommand{\Hyp}{\operatorname{\mathsf{Hyp}}}
\newcommand{\SemiAlgRel}{\operatorname{\mathsf{SARel}}}
\newcommand{\sarel}{\operatorname{\mathsf{SARel}}_\mathrm{H}}
\newcommand{\Dynam}{\operatorname{\mathsf{Dynam}}}
\newcommand{\comp}{\mathsf{comp}}
\newcommand{\algtohyp}{\Phi}
\newcommand{\hyptoalg}{\mathsf{Alg}}
\newcommand{\mon}{\operatorname{\mathsf{Mon}}}
\newcommand{\frob}{\operatorname{\mathsf{Frob}}}
\newcommand{\Frob}{\operatorname{\mathsf{Frob}}}
\newcommand{\graph}{\operatorname{\mathsf{Gr}}}
\newcommand{\op}{\mathrm{op}}
\newcommand{\cp}{\mathord{\fatsemi}}
\newcommand{\comma}{_}

\DeclareMathOperator{\colim}{colim}

\DeclareMathOperator{\id}{id}

\newcommand{\Corel}{\mathsf{Corel}}
\newcommand{\Cospan}{\mathsf{Cospan}}
\newcommand{\FCospan}{F\Cospan}
\newcommand{\DCospan}{D\Cospan}
\newcommand{\FCorel}{F\Corel}

\DeclareMathOperator{\Lax}{\mathsf{Lax}}
\newcommand{\Ob}{\mathsf{Ob}}
\newcommand{\Lan}{\mathsf{Lan}}
\newcommand{\Kan}{\mathsf{Kan}}

\newcommand{\C}{\mathcal{C}}
\newcommand{\E}{\mathcal{E}}
\newcommand{\M}{\mathcal{M}}

\newcommand{\I}{\mathcal{I}}

\newcommand{\K}{\mathcal{K}}
\renewcommand{\H}{\mathcal{H}}

\newcommand{\R}{\mathbb{R}}

\newcommand{\define}[1]{\emph{#1}}

\theoremstyle{theorem}
\newtheorem{thm}{Theorem}[section]
\newtheorem*{thm*}{Theorem}
\newtheorem{prop}[thm]{Proposition}
\newtheorem{lemma}[thm]{Lemma}
\newtheorem{cor}[thm]{Corollary}

\theoremstyle{definition}
\newtheorem{defn}[thm]{Definition}

\theoremstyle{remark}
\newtheorem{ex}[thm]{Example}
\newtheorem{rmk}[thm]{Remark}

  \usetikzlibrary{
  	cd,
  	math,
  	decorations.markings,
		decorations.pathreplacing,
  	positioning,
  	arrows.meta,
  	shapes,
		shadows,
		shadings,
  	calc,
  	fit,
  	quotes,
  	intersections,
    circuits,
    circuits.ee.IEC
  }

\tikzset{
	spider diagram/.style={
		every to/.style={out=0, in=180, draw, thick},
		thick
	},
	dot size/.store in=\dotsize,
	dot size = 5pt,
	dot fill/.store in=\dotfill,
	dot fill = black,
	leg length/.store in=\leglen,
	leg length = 15pt,
	baby/.style={dot size = 2pt, leg length = 6pt},
	young/.style={dot size = 3pt, leg length = 10pt},
	special spider/.code n args={4}{
		\pgfkeysalso{circle, draw, thick, inner sep=0, fill=\dotfill, minimum width=\dotsize,
  		prefix after command={\pgfextra{\let\fixname\tikzlastnode}},
  		append after command={\pgfextra{
  			\ifnum #1=0{} \else {\foreach \i in {1,...,#1} {
					\tikzmath{\anglei={-90*(#1+1-2*\i)/#1};}
  				\draw [thick]
						(\fixname) .. controls
						($(\fixname.center)-(\anglei:#3/3)$) and ($(\fixname.center)-(\anglei:#3*2/3)$) ..
						({$(\fixname.center)-(\anglei:#3*2/3)$}-|{$(\fixname.center)-(#3,0)$}) coordinate (\fixname_in\i);
  			}}\fi
  			\ifnum #2=0{} \else {\foreach \i in {1,...,#2} {
					\tikzmath{\anglei={90*(#2+1-2*\i)/#2};}
  				\draw [thick]
						(\fixname.center) .. controls
						($(\fixname.center)+(\anglei:#4/3)$) and ($(\fixname.center)+(\anglei:#4*2/3)$) ..
						({$(\fixname.center)+(\anglei:#4*2/3)$}-|{$(\fixname.center)+(#4,0)$}) coordinate (\fixname_out\i);
  			}}\fi
  		}}
		}
	},
	spider/.code 2 args={
		\pgfkeysalso{special spider={#1}{#2}{\leglen}{\leglen}}
	}
}


\usetikzlibrary{arrows,backgrounds,circuits,circuits.ee.IEC,shapes,fit,matrix,decorations.markings, positioning}
\tikzstyle{none}=[inner sep = 0]

\definecolor{mygreen}{rgb}{0.7,1,0.7}
\definecolor{myyellow}{rgb}{1,1,0.6}
\tikzstyle{species}=[circle,fill=myyellow,draw=black,scale=2.15, inner sep = 2pt]
\tikzstyle{reaction}=[rectangle,fill=mygreen,draw=black,scale=2]
\tikzstyle{inarrow}=[->, >=stealth, shorten >=.03cm,line width=1.5]
\tikzstyle{empty}=[circle,fill=none, draw=none]
\tikzstyle{inputdot}=[circle,fill=purple,draw=purple, scale=.25]
\tikzstyle{inputarrow}=[->,draw=purple, shorten >=.05cm]
\tikzstyle{simple'}=[-,draw=purple,line width=1.000]
\tikzstyle{arrow}=[-,draw=black,postaction={decorate},decoration={markings,mark=at position .5 with {\arrow{>}}},line width=1.200]

\pgfdeclarelayer{edgelayer}
\pgfdeclarelayer{nodelayer}
\pgfsetlayers{edgelayer,nodelayer,main}

\tikzstyle{main node} =[circle,fill=white!20,draw,font=\sffamily\Large\bfseries]
\tikzstyle{terminal}=[circle,fill=white!20,draw,font=\sffamily\Large\bfseries,color=purple,fill=none]
\tikzstyle{none}=[inner sep=0pt]
\tikzstyle{dot}=[circle,fill=black,draw=black]
\tikzstyle{simple}=[-,draw=black,line width=1.500]

\usetikzlibrary{arrows}
\usetikzlibrary{decorations.markings}


\begin{document}

\title{A recipe for black box functors}

\author{{\scriptsize BRENDAN FONG AND MARU SARAZOLA}}

\date{}

\maketitle
\begin{abstract}
  The task of constructing compositional semantics for network-style diagrammatic languages, such as electrical circuits or chemical reaction networks, has been dubbed the black boxing problem, as it gives semantics that describes the properties of each network that can be observed externally, by composition, while discarding the internal structure. One way to solve these problems is to formalise the diagrams and their semantics using hypergraph categories, with semantic interpretation a hypergraph functor, called the black box functor, between them.
  Reviewing a principled method for constructing hypergraph categories and functors, known as decorated corelations, in this paper we construct a category of \emph{decorating data}, and show that the decorated corelations method is itself functorial, with a universal property characterised by a left Kan extension. We then argue that the category of decorating data is a good setting in which to construct any hypergraph functor, giving a new construction of Baez and Pollard's black box functor for reaction networks as an example.
\end{abstract}


\section{Introduction}

From chemical reaction networks to tensor networks to finite state automata, network diagrams are often used to represent and reason about interconnected systems. What makes such a language convenient, however, is not just that these diagrams are intuitive to read and work with: it's that the notion of networking itself has meaning in the relevant semantics of the diagrams---the chemical or computational systems themselves.

More formally, recent work has used monoidal categories, and in particular \emph{hypergraph} categories, to describe the algebraic structure of such systems, including electrical circuits, signal flow graphs, Markov processes, and automata, among many others \cite{BF,BSZ17,ASW11,BFP16,GKS17}. In this approach, diagrams are formalised as morphisms in a hypergraph category which represents the syntax of the language, and they are interpreted in another hypergraph category which models the semantics of the language. What matters then, is that this process of interpretation preserves the network operations: that this map forms what is known as a hypergraph functor.

In these hypergraph categories, the objects model interface or boundary types, and semantic interpretation often has the effect of hiding internal structure, and reducing the combinatorial, network-style diagram description of a system to the data that can be obtained via interaction, or composition, with other systems. In other words, semantic interpretation has the effect of wrapping the network, say an electrical circuit, in a `black box'. We hence, informally, call a hypergraph functor that describes the semantics of a system a black box functor. This paper describes a general method for constructing such functors.

Let's consider an example. In their paper ``A compositional framework for reaction networks'' \cite{BP17}, Baez and Pollard define a black box functor for reaction networks. Reaction networks, also known as stochastic Petri nets, were developed to describe systems of chemical reactions and their dynamics. Here is an example of a reaction network:
\[
\scalebox{0.6}{
\begin{tikzpicture}
	\begin{pgfonlayer}{nodelayer}
		\node [style=reaction] (2) at (-3, 0) {\small{$\alpha$}};
		\node [style=species] (3) at (0, 0) {\tiny{C}};
		\node [style=reaction] (4) at (3, 0) {\small{$\beta$}};
		\node [style=species] (0) at (-6, 1.5) {\tiny{A}};
		\node [style=species] (1) at (-6, -1.5) {\tiny{B}};
		\node [style=species] (5) at (6, 0) {\tiny{D}};
	\end{pgfonlayer}
	\begin{pgfonlayer}{edgelayer}
		\draw [style=arrow] (3) to (4);
		\draw [style=arrow] (4) to (5);
		\draw [style=arrow, bend left=15, looseness=1.00] (2) to (3);
		\draw [style=arrow, bend right=15, looseness=1.00] (2) to (3);
		\draw [style=arrow] (0) to (2);
		\draw [style=arrow] (1) to (2);
	\end{pgfonlayer}
\end{tikzpicture}
}
\]
Here A, B, C, and D represent chemical species, such as carbon dioxide or water, and $\alpha$ and $\beta$ represent chemical reactions; for example $\alpha$ represents the reaction $A+B \longrightarrow 2C$.

To consider the depicted network as an \emph{open} reaction network, and as a morphism in a category, we annotate this data with left and right boundaries, which one might consider as inputs and outputs. For example, in the following
\[
\scalebox{0.6}{
\begin{tikzpicture}
	\begin{pgfonlayer}{nodelayer}
		\node [style=reaction] (2) at (-3, 0) {\small{$\alpha$}};
		\node [style=species] (3) at (0, 0) {\tiny{C}};
		\node [style=reaction] (4) at (3, 0) {\small{$\beta$}};
		\node [style=species] (0) at (-6, 1.5) {\tiny{A}};
		\node [style=species] (1) at (-6, -1.5) {\tiny{B}};
		\node [style=species] (5) at (6, 0) {\tiny{D}};
        \node [style=inputdot] (a1) at (-7.5,0) {};
        \node [style=inputdot] (a2) at (-7.5,1.5) {};
        \node [style=inputdot] (b) at (-7.5,-1.5) {};
        \node [style=inputdot] (d) at (7.5,0) {};
	\end{pgfonlayer}
	\begin{pgfonlayer}{edgelayer}
		\draw [style=arrow] (3) to (4);
		\draw [style=arrow] (4) to (5);
		\draw [style=arrow, bend left=15, looseness=1.00] (2) to (3);
		\draw [style=arrow, bend right=15, looseness=1.00] (2) to (3);
		\draw [style=arrow] (0) to (2);
		\draw [style=arrow] (1) to (2);
        \draw [style=inputarrow] (a1) to (0);
        \draw [style=inputarrow] (a2) to (0);
        \draw [style=inputarrow] (b) to (1);
        \draw [style=inputarrow] (d) to (5);
        \draw [style=simple'] (7.15,2) to (7.15,-2);
        \draw [style=simple'] (7.85,2) to (7.85,-2);
        \draw [style=simple'] (7.15,2) to (7.85,2);
        \draw [style=simple'] (7.15,-2) to (7.85,-2);
        \draw [style=simple'] (-7.15,2) to (-7.15,-2);
        \draw [style=simple'] (-7.85,2) to (-7.85,-2);
        \draw [style=simple'] (-7.15,2) to (-7.85,2);
        \draw [style=simple'] (-7.15,-2) to (-7.85,-2);
	\end{pgfonlayer}
\end{tikzpicture}
}
\]

If the right boundary of one open network coincides with the left boundary of another, we may compose them by identifying all chemical species that share the same boundary annotation. Although we could represent these reaction networks directly as some form of labelled graph, and indeed Baez and Pollard do so, the core of the problem of describing black box semantics is better illustrated if we jump straight to representing the system as a vector field on the space of concentrations of the chemical species in the network. As we shall see, this idea turns reaction networks into the morphisms of a hypergraph category known as $\Dynam$. In particular, an object in $\Dynam$ is a finite set $X$, and a morphism $X \to Y$ in $\Dynam$ is a cospan of finite sets $X \to N \leftarrow Y$ together with a suitably well-behaved vector field on $N$. Here we think of each finite set as a set of chemical species, the cospan as marking the species in $N$ that can be externally controlled as inputs and outputs, and the vector field as describing the dynamics of the chemical system.

Reaction networks give rise to a system of coupled differential equations, whose solutions in turn represent the ways in which the concentrations of the chemical species in the system can vary over time. Baez and Pollard describe, in particular, the \emph{steady state} semantics: the concentrations of species that are stable with respect to supplying an inflow or outflow of chemicals at the boundary at a certain rate. Each steady state is described by a tuple $(c_i,f_i,c_o,f_o)$, whose entries represent, respectively, the concentrations at the left boundary, or input, the inflow, the concentrations at the right boundary, and the outflow. Thus the set of all steady states forms a binary relation between an input and an output space of concentrations and flows. Writing $\Rel$ for the category of sets of binary relations, Baez and Pollard show that this defines a strong symmetric monoidal functor $\blacksquare\colon \Dynam \to \Rel$.

In this paper we will provide a general recipe for constructing such functors. In doing so, we will provide a streamlined proof of the functoriality of this functor $\blacksquare$, and in fact show, moreover, that we can understand it as a hypergraph functor.

Baez and Pollard use what is known as a decorated cospans construction to define the domain of their functor, $\Dynam$. The main theme of this paper is a careful study of a generalisation of this construction, introduced in \cite{Fong18}, known as \emph{decorated corelations}. Decorated cospans constructs a hypergraph category from a finitely cocomplete category $\C$ and a lax symmetric monoidal functor $F\colon (\C,+) \to (\Set,\times)$. In the above example $\Dynam$, the finitely cocomplete category is the category $\FinSet$ of finite sets, and $F$ is the functor that maps a finite set $N$ to the set of suitably well-behaved vector fields on $\R^N$. Decorated corelations generalises this by also requiring a factorisation system $(\E,\M)$ on $\C$, and extending $F$ to a functor on a certain subcategory $\C\cp\M^\op$ of $\Cospan(\C)$. As we shall see, this method suffices to construct, up to equivalence, all hypergraph categories and hypergraph functors. Moreover, it can be used to represent the data of a hypergraph category in an efficient way that makes the data easy to work with.

Our first main result is the functoriality of this construction. Indeed, we define a category $\Data$ whose objects are \emph{decorating data}: the tuples $(\C,(\E,\M),F)$ required for the decorated corelations construction. Write $\Hyp$ for the category whose objects are hypergraph categories and morphisms are hypergraph functors.
\begin{thm*}
  The decorated corelations construction defines a functor
  \[
    (-)\Corel \colon \Data \to \Hyp.
  \]
\end{thm*}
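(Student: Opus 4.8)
The plan is to define the functor $(-)\Corel$ on morphisms and then verify the two functor axioms, taking the construction on objects---a hypergraph category for each decorating datum---as given. Recall that a morphism of decorating data $(\C,(\E,\M),F) \to (\C',(\E',\M'),F')$ consists of a finite-colimit-preserving functor $G\colon \C \to \C'$ that preserves the factorisation system, in the sense that $G(\E) \subseteq \E'$ and $G(\M) \subseteq \M'$, together with a monoidal natural transformation $\tau\colon F \Rightarrow F' \circ \tilde G$, where $\tilde G\colon \C\cp\M^\op \to \C'\cp\M'^\op$ is the functor induced by $G$ on cospans (well defined because $G$ preserves $\M$ and finite colimits). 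Given such a morphism, I would define the candidate hypergraph functor $(G,\tau)\Corel$ to act as $G$ on objects, and to send a decorated corelation represented by a cospan $X \to N \leftarrow Y$ with decoration $s \in F(N)$ to the cospan $GX \to GN \leftarrow GY$ with decoration $\tau_N(s) \in F'(GN)$.

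The first task is well-definedness. Since $G$ preserves $\E$, the image of a corelation is again a corelation: if the copairing $X + Y \to N$ lies in $\E$, then applying $G$ and using $G(X+Y) \cong GX + GY$ exhibits $GX + GY \to GN$ in $\E'$. Independence of the chosen representative follows because $G$ carries an isomorphism of apices to an isomorphism of apices, and naturality of $\tau$ guarantees the decorations are identified accordingly. Preservation of identities is immediate.

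The main obstacle, and the heart of the proof, is preservation of composition, because composition of decorated corelations is a four-step process: form the pushout of the two apices, combine the two decorations using the lax structure map of $F$ and push the result forward along the pushout coprojection, factor the new copairing as an $\E$-map followed by an $\M$-map, and finally transport the decoration back along that $\M$-map using $F$ on $\C\cp\M^\op$. I would check that $G$ intertwines each step with its primed counterpart: $G$ preserves the pushout; the monoidality and naturality of $\tau$ make the combine-and-push-forward square commute; $G$ applied to the $(\E,\M)$-factorisation is again an $(\E',\M')$-factorisation, so by essential uniqueness of factorisations it agrees up to canonical isomorphism with the factorisation computed after applying $G$; and naturality of $\tau$ at the resulting $\M$-morphism handles the final transport. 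Assembling these commuting squares shows the two decorated corelations one obtains---compose-then-apply versus apply-then-compose---coincide. The analogous but easier bookkeeping shows the symmetric monoidal structure (inherited from $+$ in $\C$ and the lax structure of $F$) and the special commutative Frobenius maps on each object (built from the canonical (co)diagonal comparison morphisms) are preserved, so $(G,\tau)\Corel$ is a genuine hypergraph functor.

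Finally I would verify functoriality of $(-)\Corel$ itself. The identity morphism of decorating data is $(\id_\C, \id_F)$, whose image fixes objects and decorated corelations, hence is the identity hypergraph functor. For a composite $(G',\tau') \circ (G,\tau) = (G'G,\, (\tau' \tilde G)\cdot \tau)$ of morphisms of decorating data, the induced object assignment is $G'G$ and the induced decoration map at an apex $N$ is $\tau'_{GN} \circ \tau_N$; comparing these with the composite of the two induced hypergraph functors shows the two agree on objects and on decorated corelations. This establishes $(-)\Corel \colon \Data \to \Hyp$ as a functor.
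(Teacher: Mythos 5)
Your proposal has a genuine gap: it misstates what a morphism of $\Data$ is, and the extra hypothesis you build in is exactly what makes the theorem nontrivial. In the paper, a morphism $(A,\alpha)\colon(\C,(\E,\M),F)\to(\C',(\E',\M'),F')$ requires only that $A$ be finitely cocontinuous with $A(\M)\subseteq\M'$; it is \emph{not} required that $A(\E)\subseteq\E'$. Consequently your formula for the induced hypergraph functor---send $(X\xrightarrow{i}N\xleftarrow{o}Y,\,s)$ to $(AX\to AN\leftarrow AY,\,\alpha_N(s))$---is not even well defined in this generality: since $A[i,o]$ need not lie in $\E'$, the image cospan need not be an $(\E',\M')$-corelation. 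The correct action on morphisms (Prop.~\ref{prop.deccorelfunctor}, taken from \cite{Fong18}) must re-factor $A(X+Y)\xrightarrow{A[i,o]}AN$ as $e'$ followed by $m'$ in $(\E',\M')$, take the corelation determined by $e'$, and transport the decoration along the $\M'$-part, $t=\big(F'({m'}^\op)\circ\alpha_N\big)(s)$. This re-factoring step---the very reason $F'$ is defined on $\C'\cp{\M'}^\op$ rather than on $\C'$---is absent from your construction, and it is where the universal property of the factorisation system genuinely carries the burden in the composition-preservation argument.

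Nor can you simply add $A(\E)\subseteq\E'$ to the definition of $\Data$, because that destroys the paper's main application: the black box functor of Section~\ref{section.application} is $(-)\Corel$ applied to the morphism $(\id,\alpha)\colon(\FinSet,(\FinSet,\I),D)\to(\FinSet,(\I,\FinSet),S)$, for which the condition $A(\M)\subseteq\M'$ reads $\I\subseteq\FinSet$ and holds trivially, while $A(\E)\subseteq\E'$ would read $\FinSet\subseteq\I$ and fails completely; its action on morphisms is precisely the re-factoring and transport $v\mapsto S([i,o]^\op)\big(\alpha_N(v)\big)$ of \eqref{eq.scorel}. So as written your argument establishes functoriality only on a subcategory of $\Data$ with the same objects but strictly fewer morphisms, not the stated theorem. (For comparison, the paper does not take the elementary route at all: it proves the theorem by factoring $(-)\Corel$ as $\algtohyp\circ\Kan$ through the category $\calg$ of cospan algebras, where the trivial factorisation system makes the direct check easy, and then identifies the composite with $(-)\Corel$ via Lemmas~\ref{lem.corelkanobjects} and \ref{lem.corelkanmorphisms}; a corrected elementary proof along your lines would have to confront the re-factoring step head on.)
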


We will prove this theorem by a characterisation, interesting in its own right, of the decorated corelations functor $(-)\Corel$ in terms of left Kan extension. To do this, we make use of a full subcategory of $\Data$ that we call $\calg$, whose objects are finitely cocomplete categories $\C$ together with a lax symmetric monoidal functor $\Cospan(\C) \to \Set$.

\begin{thm*}
  The functor $(-)\Corel$ factors as the composite $\algtohyp \circ \Kan$, where these functors are part of adjunctions
\[
\btk[column sep=small]
\Hyp \ar[rr, shift left =5pt, "\hyptoalg"] &
  \scriptstyle\bot &
  \calg \ar[ll, shift left =5pt, "\algtohyp"] \ar[rr, shift left =5pt, "\iota"] &
  \scriptstyle\top &
  \Data.  \ar[ll,shift left =5pt, "\Kan"]
  \etk
\]
\end{thm*}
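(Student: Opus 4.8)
The plan is to route the entire statement through the universal property of the Kan extension, proving the two adjunctions separately and then deducing the factorization. Write $j\colon \C\cp\M^\op \hookrightarrow \Cospan(\C)$ for the evident inclusion, so that a decorating datum carries a lax symmetric monoidal $F\colon \C\cp\M^\op \to \Set$, and an object of $\calg$ is exactly a datum whose decoration functor is already defined on all of $\Cospan(\C)$; the inclusion $\iota$ realizes such a $(\C,G)$ as the datum $(\C,(\mathrm{Iso},\mathrm{All}),G)$, for which $\C\cp\M^\op = \Cospan(\C)$. I take as given the first Theorem, that $(-)\Corel\colon \Data \to \Hyp$ is a functor, together with the earlier development showing that decorated corelations present, up to equivalence, every hypergraph category.

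First I would define $\Kan$ objectwise by the left Kan extension, $\Kan(\C,(\E,\M),F) = (\C,\Lan_j F)$, and prove $\Kan \dashv \iota$. Unwinding the two hom-sets for $D=(\C,(\E,\M),F)$ and $A=(\C',G')$, a morphism $D \to \iota A$ in $\Data$ consists of a suitable functor $P\colon \C\to\C'$ between base categories together with a monoidal natural transformation $F \Rightarrow (G'\circ\Cospan P)\circ j$, the factorization-system compatibility built into a morphism of decorating data being automatically satisfied since the $\M$-class of $\iota A$ is all of $\C'$; a morphism $\Kan D \to A$ in $\calg$ is the same $P$ together with $\Lan_j F \Rightarrow G'\circ\Cospan P$, and the universal property of the left Kan extension supplies a bijection between these. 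Two points need care: the extension must be carried out in the lax symmetric monoidal setting, so I would invoke the monoidal Kan extension lemma (using that $j$ is strong monoidal and $F$ lax) to produce the laxator on $\Lan_j F$ and check that the bijection restricts to \emph{monoidal} transformations; and because morphisms of $\Data$ also vary the base category, I must promote the fixed-base bijection to one natural in $P$, checking compatibility of $\Lan$ with the change-of-base functors $\Cospan P$. This is bookkeeping rather than a single hard idea. Since $\iota$ is visibly fully faithful, the counit $\Kan\,\iota \Rightarrow \id_{\calg}$ is then invertible.

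Next I would record the left-hand adjunction $\algtohyp \dashv \hyptoalg$ as a nerve--realization pair. Here $\algtohyp(\C,G)$ is the hypergraph category with objects those of $\C$ and $\Hom(X,Y) = G(X+Y)$, its units, composition and special commutative Frobenius maps induced from the laxator of $G$ and the Frobenius structure every object of $\Cospan(\C)$ carries; dually $\hyptoalg(\H) = (\C_\H, \Hom_\H(I,-))$ sends a hypergraph category to its algebra of states, the Frobenius structure of $\H$ making $\Hom_\H(I,-)$ lax monoidal on cospans. The required bijection $\Hom_{\Hyp}(\algtohyp(\C,G),\H) \cong \Hom_{\calg}((\C,G),\hyptoalg \H)$ says precisely that a hypergraph functor out of $\algtohyp(\C,G)$ is determined by where it sends objects, together with a compatible assignment of states to the generating morphisms $G(X+Y)$; the triangle identities, and the fact that the comparison maps are genuine hypergraph functors, are exactly the coherence statements underlying the earlier essential-surjectivity results, which I would cite and lightly adapt rather than reprove. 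Combined with $\Kan\,\iota \cong \id$, this already yields $\algtohyp \cong (-)\Corel \circ \iota$.

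The heart of the proof is the factorization $(-)\Corel \cong \algtohyp \circ \Kan$, whose only real content is that applying $(-)\Corel$ to a datum with nontrivial $(\E,\M)$ agrees with first Kan-extending the decorations and then forming the states realization. Both hypergraph categories have the objects of $\C$, so on morphisms the claim reduces to a natural isomorphism \[ \Lan_j F(X+Y) \;\cong\; \coprod_{[\,X+Y \twoheadrightarrow N\,]} F(N), \] the coproduct indexed by isomorphism classes of $\E$-corelations on $X+Y$. The key lemma is a structural analysis of the comma category $j\downarrow(X+Y)$ computing the Kan extension: using stability of $\E$ under pushout and uniqueness of $(\E,\M)$-factorizations, its connected components are indexed by the $\E$-quotients $X+Y\twoheadrightarrow N$, and each component has a terminal object at which $F$ takes the value $F(N)$, so the defining colimit collapses to the displayed coproduct. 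I expect this cofinality argument, together with the task of upgrading the resulting objectwise bijection to an \emph{isomorphism of hypergraph categories}, to be the main obstacle: I must check that corelation composition---pushout, then $(\E,\M)$-factorization, then pushforward of the combined decoration along the resulting $\E$-map---matches composition in $\algtohyp(\C,\Lan_j F)$ as governed by the laxator of $\Lan_j F$, and similarly for the monoidal product and the Frobenius maps. Granting the monoidal Kan extension lemma, each comparison is forced by the universal property, but assembling them coherently is where the work lies.
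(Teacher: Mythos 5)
The core error is the handedness of the $\Hyp$--$\calg$ adjunction. You claim $\algtohyp \dashv \hyptoalg$, with hom-bijection $\Hom_{\Hyp}(\algtohyp(\C,G),\H) \cong \Hom_{\calg}((\C,G),\hyptoalg\H)$. The theorem asserts the opposite handedness (read with the same orientation convention that makes $\Kan$ the \emph{left} adjoint on the other side), and the paper proves it that way: $\hyptoalg$ is \emph{left} adjoint to $\algtohyp$, i.e.\ $\Hom_{\calg}(\hyptoalg\H,(\C,G)) \cong \Hom_{\Hyp}(\H,\algtohyp(\C,G))$. This is not a cosmetic relabelling: the correct direction works precisely because $\hyptoalg\H$ has base category $\FinSet_{\Ob\H}$, the \emph{free} finitely cocomplete category on $\Ob\H$, so a finitely cocontinuous functor \emph{out} of it is just a function on objects --- exactly the object part of a hypergraph functor $\H \to \algtohyp(\C,G)$. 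Your bijection instead requires extracting, from a hypergraph functor $\Psi\colon \algtohyp(\C,G) \to \H$, a finitely cocontinuous functor $\C \to \FinSet_{\Ob\H}$; this is impossible in general, since the natural candidate $X \mapsto (\Psi X)$ would have to send each morphism $f\colon X \to Y$ of $\C$ to a label-preserving function between singleton lists, which exists only when $\Psi X = \Psi Y$. Likewise, the unit of your claimed adjunction would demand a canonical cocontinuous functor $\C \to \FinSet_{\Ob\C}$ for every finitely cocomplete $\C$; no such functor exists when $\C$ is not free (for $\C$ the category of finite-dimensional vector spaces, the only cocontinuous functor into any $\FinSet_\Lambda$ is constant at the initial object). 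The paper's comparison map goes the other way, via the cocontinuous functor $\mon\colon \FinSet_{\Ob\C} \to \C$ sending a list to the coproduct of its entries, and this $(\mon,\id)$ is the \emph{counit} of $\hyptoalg \dashv \algtohyp$, while the unit $\eta_\H\colon \H \to \algtohyp\hyptoalg\H$ is the (equivalence) functor sending $X$ to the one-entry list. Your ``nerve--realization'' framing has the free/realization-like functor on the wrong side; this part of the argument must be redone with the adjunction in the correct direction.

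The other two pieces are essentially right and match the paper. Your $\Kan \dashv \iota$ argument is the paper's (the Kan-extension/precomposition adjunction, with invertible counit since $\iota$ is fully faithful), and your comma-category analysis of $\Lan_j F(X+Y)$ reproduces the paper's explicit formula $\Lan F(X+Y) = \{(X+Y \xrightarrow{e} N,\, s \in FN)\}$, from which the identification of hom-sets of $\FCorel_{(\E,\M)}$ and $(\Lan F)\Corel_{(\I,\C)}$ follows as in the paper; the paper verifies the universal property directly rather than via cofinality, and moreover gets an equality of hypergraph categories (identity on objects and morphisms), not merely an isomorphism, because the comparison functor $(\id_\C,\kappa)\Corel$ is available. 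One structural caveat: you ``take as given'' that $(-)\Corel$ is a functor, but in the paper that first theorem is itself \emph{deduced} from this factorisation --- the lemmas establish the object- and morphism-level agreement, and functoriality of $(-)\Corel$ is then transported from $\algtohyp \circ \Kan$. As written, your proof would be circular inside the paper unless you also supply the (straightforward but lengthy) direct proof of functoriality that the paper alludes to but omits.
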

As a corollary of these investigations, we shall see that decorated corelations allow us to build, up to equivalence, any hypergraph category and any hypergraph functor.

Moreover, we shall see that this understanding allows a new proof of the functoriality of Baez and Pollard's black box functor. The key idea is that Baez and Pollard start with an object $D$ in $\Data$, and use $(-)\Corel$ to turn it into a hypergraph category $\Dynam$ in $\Hyp$. Then, after defining the hypergraph category of semialgebraic relations $\SemiAlgRel$, they directly construct the hypergraph functor $\blacksquare$. On the other hand, we show that using the functor $\hyptoalg$ and inclusion of categories $\iota$, we can reduce $\SemiAlgRel$ to an object $S$ in $\Data$ and construct a morphism in $\Data$ from $D$ to $S$. In short, we work on the right of the above diagram, instead of on the left. The properties of $\Data$ ensure that this category is easier to work with, and the functor $(-)\Corel$ lifts this morphism in $\Data$ to the desired black box functor. We hope that the simplicity of our proof provides a recipe for further work on constructing black box functors.

\paragraph{Outline}
The layout of this paper is as follows. Section \ref{section.background} contains the necessary background, recalling the definitions of the category of cospans $\Cospan(\C)$, of Frobenius monoids, and of hypergraph categories, along with some key examples that will be of use later in the paper.

Section \ref{section.decorating.data} deals with decorating data. After presenting the types of decorating data available in the literature, decorated cospans and corelations, we introduce a new category $\Data$ of decorating data which consists of all the relevant information one needs for decorating purposes. At the end of the section we state our first main theorem (Theorem~\ref{thm.deccorelfunctorial}): decorating corelations extends to a functor $(-)\Corel\colon \Data \to \Hyp$. The proof is deferred to Section \ref{section.corel.functor}.

In Section \ref{section.cospanalg}, we consider the full subcategory $\calg\subseteq\Data$. We show that this embedding admits a left adjoint, which we call $\Kan$ since it entails of taking left Kan extensions of the functors involved. The latter part of this section is mainly technical, and provides an explicit description of the functor $\Kan$ that proves useful for comparing it to previous constructions.

Section \ref{section.hyp.and.cospanalg} considers the restriction of $(-)\Corel$ to the category $\calg$, which we show to be  functorial. Section \ref{section.corel.functor} contains our main result, Thm.\ \ref{thm.kan}, which states that the functor $(-)\Corel\colon\Data\to\Hyp$ factors as $$\Data\xrightarrow{\Kan} \calg\xrightarrow{\algtohyp}\Hyp.$$ In particular, this implies Thm.\ \ref{thm.deccorelfunctorial}.

Section \ref{recover_hyp} shows that the functor $\algtohyp$ admits a left adjoint $\hyptoalg\colon\Hyp\to\calg$. Furthermore, the unit of this adjunction is a component-wise equivalence of hypergraph categories, which proves that every hypergraph category can be built from $\Data$ via the decorated corelations construction. Thus, our category $\Data$ contains all the necessary information for dealing with hypergraph categories. In the case that our hypergraph categories are objectwise free, we show our construction in fact recovers the hypergraph categories up to isomorphism. 

Finally, Section \ref{section.application} gives an application of our results. We briefly recall the main aspects of open dynamical systems, and of the black-boxing functor $\blacksquare\colon\Dynam\to\SemiAlgRel$ constructed in \cite{BP17}. We then show how this functor can be obtained in a simple and natural way through our results.


\paragraph{Acknowledgements}
We thank David Spivak and John Baez for useful conversations. BF was supported by USA AFOSR grants FA9550-14-1-0031 and FA9550-17-1-0058. MS was supported by Cornell University's Torng Prize Fellowship.

\section{Background}\label{section.background}

We quickly review the notions of cospan category, Frobenius monoid, and hypergraph category. A fuller introduction can be found in \cite{Fong16,FS}. Throughout this paper, we will assume all categories are essentially small.

\subsection{Cospan categories}

As observed by Benabou \cite{Ben67}, for any finitely cocomplete category $\C$, we can define a symmetric monoidal category $\Cospan(\C)$ whose objects are the same objects as in $\C$, and where a map $X\to Y$ is given by an isomorphism class of cospans $X\xrightarrow{i} N \xleftarrow{o} Y$. Recall that two cospans $X\xrightarrow{i} N \xleftarrow{o} Y$ and $X\xrightarrow{i'} N' \xleftarrow{o'} Y$ sharing the same feet are isomorphic if there exists an isomorphism between the apexes $N\xrightarrow{f} N'$ such that $f \circ i = i'$ and $f \circ o = o'$.

Cospans $X\xrightarrow{i} N \xleftarrow{o} Y$ and $Y\xrightarrow{i'} M \xleftarrow{o'} Z$ compose by using the pushout along the common foot $X\xrightarrow{j_Ni} P \xleftarrow{j_Mo'} Z$:
\[\btk
& & P
& & \\
& N\ar[ru,"j_N"] & & M\ar[lu,"j_M"'] & \\
X\ar[ru,"i"] & & Y\ar[lu,"o"']\ar[ru,"i'"] & & Z.\ar[lu,"o'"']
\etk\]

Since $\C$ has finite colimits, it can be given a symmetric monoidal structure with the coproduct and initial object playing the role of the tensor product and unit. Thus $\Cospan(\C)$ inherits this monoidal structure via the embedding $\C\hookrightarrow\Cospan(\C)$, an identity-on-objects functor taking a map $f\colon X\to Y$ in $\C$ to the cospan $\btk[cramped] X \ar[r,"f"] & Y & Y\ar[l,equal]\etk$, where we use the long equals sign to denote the identity map. We will sometimes abuse notation and refer to this cospan simply as $f$, and to its ``opposite'' cospan $\btk[cramped] Y\ar[r,equal] & Y & X\ar[l,"f"'] \etk$ as $f^\op$.

\subsection{Frobenius monoids}

\begin{defn}
A \define{special commutative Frobenius monoid} in a symmetric monoidal category $\C$ is an object $X$ together with maps
\[
\begin{tikzpicture}[spider diagram, dot fill=black]
	\node[spider={2}{1}, label={[below=.5]:{$\mu\colon X\otimes X\to X$}}] (a) {};
	\node[spider={0}{1}, label={[below=.5]:{$\eta\colon I\to X$}}, right=3 of a] (b) {};
	\node[spider={1}{2}, label={[below=.5]:{$\delta\colon X\to X\otimes X$}}, right=3 of b] (c) {};
	\node[spider={1}{0}, label={[below=.5]:{$\epsilon\colon X\to I$}}, right=3 of c] (d) {};
\end{tikzpicture}
\]
such that $(X,\mu,\eta)$ is a commutative monoid, i.e.
\[
  \begin{tikzpicture}[spider diagram,xscale=.8,yscale=.8]
	\node[spider={2}{1}] (a) {};
	\node[special spider={2}{1}{\leglen}{0}, left=.1 of a_in1] (b) {};
	\draw (a_in1) to (b.east);
	\draw (a_in2) to (b_in1|-a_in2);
	\node[spider={2}{1}, right=2.5 of b] (aa) {};
	\node[special spider={2}{1}{\leglen}{0}, left=.1 of aa_in2] (bb) {};
	\draw (aa_in2) to (bb.east);
	\draw (aa_in1) to (bb_in1|-aa_in1);
	\node at ($(b.west)!.5!(aa.east)$) {$=$};
	\node[spider={2}{1}, right=2 of aa_out1] (a) {};
	\node[special spider={0}{1}{\leglen}{0}, left=.1 of a_in1] (b) {};
	\draw (a_in1) to (b.east);
	\draw (a_in2) to ($(a_in2)-(\leglen,0)$);
	\coordinate[right=.5 of a_out1] (aa) {};
	\coordinate (bb) at ($(aa)+(3*\leglen,0)$) {};
	\draw (aa) to (bb);
	\node at ($(a_out1)!.5!(aa)$) {$=$};
	\node[spider={2}{1}, right=2 of bb] (a) {};
	\coordinate (b1) at ($(a_in1)-(.2,0)$);
	\coordinate (b2) at ($(a_in2)-(.2,0)$);
	\draw (a_in1) to (b1);
	\draw (a_in2) to (b2);
	\node[spider={2}{1}, right=1.5 of a_out1] (aa) {};
	\coordinate (bb1) at ($(aa_in1)-(.6,0)$);
	\coordinate (bb2) at ($(aa_in2)-(.6,0)$);
	\coordinate (bb) at ($(bb1)!.5!(bb2)$);
	\draw (bb1) to (aa_in2);
	\draw (bb2) to (aa_in1);
	\node at ($(a_out1)!.5!(bb)$) {$=$};
\end{tikzpicture}
\]
$(X,\delta,\epsilon)$ is a cocommutative comonoid, i.e.
\[
\begin{tikzpicture}[spider diagram,scale=.8]
	\node[spider={1}{2}] (a) {};
	\node[special spider={1}{2}{0}{\leglen}, right=.1 of a_out1] (b) {};
	\draw (a_out1) to (b_in1);
	\draw (a_out2) to (b_out1|-a_out2);
	\node[spider={1}{2}, right=1.5 of b] (aa) {};
	\node[special spider={1}{2}{0}{\leglen}, right=.1 of aa_out2] (bb) {};
	\draw (aa_out2) to (bb_in1);
	\draw (aa_out1) to (bb_out1|-aa_out1);
	\node at ($(b.west)!.5!(aa.east)$) {$=$};
	\node[spider={1}{2}, right=2.5 of aa] (a) {};
	\node[special spider={1}{0}{0}{\leglen}, right=.1 of a_out1] (b) {};
	\draw (a_out1) to (b_in1);
	\draw (a_out2) to ($(a_out2)+(\leglen,0)$);
	\coordinate[right=1.5 of a] (aa) {};
	\coordinate (bb) at ($(aa)+(3*\leglen,0)$) {};
	\draw (aa) to (bb);
	\node at ($(a_in1)!.5!(bb)$) {$=$};
	\node[spider={1}{2}, right=1.5 of bb] (a) {};
	\coordinate (b1) at ($(a_out1)+(.2,0)$);
	\coordinate (b2) at ($(a_out2)+(.2,0)$);
	\draw (a_out1) to (b1);
	\draw (a_out2) to (b2);
	\node[spider={1}{2}, right=1.5 of a] (aa) {};
	\coordinate (bb1) at ($(aa_out1)+(.6,0)$);
	\coordinate (bb2) at ($(aa_out2)+(.6,0)$);
	\coordinate (bb) at ($(bb1)!.5!(bb2)$);
	\draw (aa_out2) to (bb1);
	\draw (aa_out1) to (bb2);
	\node at ($(a_in1)!.45!(bb)$) {$=$};
\end{tikzpicture}
\]
and the Frobenius and special axioms are satisfied:
\[
\begin{tikzpicture}[spider diagram,scale=.8]
	\node[spider={1}{2}] (a) {};
	\node[spider={2}{1}, below right=\leglen*2/3 and 1 of a] (b) {};
	\coordinate (c1) at (a_out1-|b_out1);
	\coordinate (c2) at (a_in1|-b_in2);
	\draw (a_out2) -- (b_in1);
	\draw (a_out1) -- (c1);
	\draw (c2) -- (b_in2);
	\node[special spider={2}{1}{\leglen}{0}, right=2 of a_out2] (aa) {};
	\node[special spider={1}{2}{0}{\leglen}, right=.5 of aa] (bb) {};
	\draw (aa_out1) -- (bb_in1);
	\node[spider={1}{2}, right=1 of bb_out2] (aaa) {};
	\node[spider={2}{1}, above right=\leglen*2/3 and 1 of aaa] (bbb) {};
	\coordinate (ccc1) at (aaa_out2-|bbb_out1);
	\coordinate (ccc2) at (aaa_in1|-bbb_in1);
	\draw (aaa_out1) -- (bbb_in2);
	\draw (aaa_out2) -- (ccc1);
	\draw (ccc2) -- (bbb_in1);
	\coordinate (h1) at ($(b_out1)!.5!(aa_in1)$);
	\coordinate (h2) at ($(aaa_in1)!.5!(bb_out1)$);	
	\coordinate (h3) at ($(aa_out1)!.5!(bb_in1)$);
	\node at (h1|-aa) {$=$};
	\node at (h2|-aa) {$=$};
	\node[spider={1}{2}, right=3 of aaa_out1] (aaaa) {};
	\node[spider={2}{1}, right=1 of aaaa] (bbbb) {};
	\draw (aaaa_out1) -- (bbbb_in1);
	\draw (aaaa_out2) -- (bbbb_in2);
	\coordinate[right=2.5 of aaaa] (aaaaa) {};
	\coordinate[right=1 of aaaaa] (bbbbb) {};
	\draw (aaaaa) -- (bbbbb);
	\node at ($(bbbb_out1)!.5!(aaaaa)$) (h4) {$=$};
\end{tikzpicture}
\]
\end{defn}

\begin{ex}\label{cospan_is_hypergraph}
Every object $X$ in $\Cospan(\C)$ can be given a canonical special commutative Frobenius structure. The monoid structure is inherited from its canonical monoid structure in $(\C,+,\emptyset)$---that is, $\eta\coloneqq \ !\colon\emptyset\to X$ and $\mu\coloneqq[1_X,1_X]\colon X+X\to X$---through the embedding $\C\hookrightarrow\Cospan(\C)$. The comonoid structure is given by the opposite cospans; explicitly, the counit and coproduct maps are respectively
\[\btk
X\ar[r,equal] & X & \emptyset\ar[l,"\eta"']
\etk
\hspace{1cm}\text{and}\hspace{1cm}
\btk
X\ar[r,equal] & X & X+X. \ar[l,"\mu"']
\etk\]
\end{ex}

\subsection{Hypergraph categories}

We can consider monoidal categories in which every object has a Frobenius structure; these are due to Carboni and Walters \cite{Car91}. Write $\sigma$ for the braiding in a symmetric monoidal category.

\begin{defn}
  A \define{hypergraph category} is a symmetric monoidal category $(\H,\otimes,I)$ whose every object $X$ is equipped with a Frobenius structure $(\mu_X,\eta_X,\delta_X,\epsilon_X)$ in a manner that is compatible with the monoidal structure in $\H$; that is, such that the Frobenius structure on $X \otimes Y$ is
  \[
  \Big( (\mu_X \otimes \mu_Y) \circ (1 \otimes \sigma \otimes 1),\enspace
  \eta_X \otimes \eta_Y, \enspace
  (1 \otimes \sigma \otimes 1) \circ (\delta_X \otimes \delta_Y),\enspace
 \epsilon_X \otimes \epsilon_Y
  \Big),
\]
and such that the Frobenius structure on $I$ is $(\rho_I^{-1},\id_I,\rho_I,\id_I)$, where $\rho$ denotes the unitor in $(\H,\otimes,I)$.

A hypergraph functor is a strong symmetric monoidal functor between hypergraph categories that preserves the Frobenius structures present---that is, such that if the Frobenius structure on $X$ is $(\mu_X,\eta_X,\delta_X,\epsilon_X),$ then that on $FX$ must be
\[
  \left(F\mu_X \circ \varphi_{X,X},\enspace
  F\eta_X \circ \varphi_I, \enspace
  \varphi^{-1}_{X,X} \circ F\delta_X,\enspace
  \varphi_I^{-1} \circ F\epsilon_X\right).
\]

\end{defn}

\begin{ex}
As we saw in Example \ref{cospan_is_hypergraph}, every object in $\Cospan(\C)$ can be given a Frobenius structure, and one can further prove that these are compatible in the required sense, making $\Cospan(\C)$ into our prototypical example of hypergraph category.
\end{ex}

\begin{ex}\label{linrel}
Consider the symmetric monoidal category $\LinRel$, whose objects are finite dimensional $\mathbb{R}$-vector spaces with a chosen basis (or in other words, $\mathbb{R}^n$,) and where maps $\mathbb{R}^m\to\mathbb{R}^n$ are linear relations; that is,  linear subspaces of $\mathbb{R}^m\oplus\mathbb{R}^n$. The symmetric monoidal structure is given by direct sum.

Note that every each object $\R^n$ of $\LinRel$ is a monoidal product of $n$ copies of $\R$. Thus, to equip $\LinRel$ with a hypergraph structure, it suffices to give the object $\mathbb{R}$ a Frobenius structure. In fact, we can equip $(\LinRel, \oplus, 0)$ with \emph{two} different hypergraph category structures. Let's see how these are constructed.

Since the multiplication and comultiplication are maps $\mu\colon\mathbb{R}\otimes\mathbb{R}\to\mathbb{R}$ and $\delta\colon\mathbb{R}\to\mathbb{R}\oplus\mathbb{R}$, they will be defined by subspaces of $\mathbb{R}\oplus\mathbb{R}\oplus\mathbb{R}$; similarly, the unit and counit will correspond to subspaces of $\mathbb{R}$.

The first structure we consider has both $\mu$ and $\delta$ given by the
subspace $$\{ (v,v,v)\}\subseteq \mathbb{R}\oplus\mathbb{R}\oplus\mathbb{R}$$
and unit and counit given by the subspace $\mathbb{R}\subseteq\mathbb{R}$. The
second structure has multiplication defined as the subspace $$\{ (u,v,w)\mid
u+v=w\}\subseteq \mathbb{R}\oplus\mathbb{R}\oplus\mathbb{R},$$ comultiplication
given by $$\{ (u,v,w)\mid u=v+w\}\subseteq \mathbb{R}\oplus\mathbb{R}\oplus\mathbb{R}$$ and unit and counit $0\subseteq\mathbb{R}$. It's not hard to show that these satisfy the conditions needed to make $\mathbb{R}$ into a special Frobenius object, thus yielding two different hypergraph category structures on $\LinRel$.

More details can be found in \cite[Section 5.5 and 7.1]{BF}.
\end{ex}

\section{Decorating data}\label{section.decorating.data}
A convenient way to construct hypergraph categories and functors is by using decorated corelations. In this section we review the decorated cospans and decorated corelations constructions of \cite{Fong15,Fong18}, and we define a category $\Data$ of ``decorating data''. We will eventually see that $\Data$ contains all the relevant information needed to construct all hypergraph categories and hypergraph functors between them.

\subsection{Decorated cospans}

Cospan categories prove useful for representing {\it open networks} \cite{RSW05,Fong16}. However, sometimes it is necessary to record some extra information, such as labelings on the edges of a graph, or resistance values on an electrical circuit. For that, we can extend this structure to allow for what are called {\it decorations}.

\begin{defn}
Let $\C$ be a finitely cocomplete category, and consider a symmetric lax monoidal functor $(F,\varphi)\colon(\C,+)\to (\Set,\times)$. An \define{$F$-decorated cospan} in $\C$ is a pair
\[\left(
\btk[row sep=2ex]
& N &\\
X\ar[ur,"i"] & & Y\ar[ul,"o"']
\etk, \hspace{0.5cm}
\btk[row sep=2ex]
FN\\
1\ar[u,"s"']
\etk
\right)\]
where $X\xrightarrow{i} N \xleftarrow{o} Y$ is a cospan in $\C$ and $s$ is an element in the set $FN$. We say this is a cospan \define{decorated} by $s$.
\end{defn}

Two decorated cospans $(X\xrightarrow{i} N \xleftarrow{o} Y, s)$ and $(X\xrightarrow{i'} N' \xleftarrow{o'} Y', s')$ are isomorphic if there exists an isomorphism $f\colon N \to N'$ of cospans such that $Ff(s) =s'$.

Just like in the case of regular cospans, decorated cospans are the morphisms in some category. The following construction is the content of \cite[Prop.\ 3.2]{Fong15}.

\begin{defn}
Under the same hypotheses as above, we can define a category $\FCospan$ whose objects are the same as in $\C$, and whose maps $X\to Y$ are isomorphism classes of $F$-decorated cospans. Composition of decorated cospans $(X\xrightarrow{i} N \xleftarrow{o} Y, s)$ and $(Y\xrightarrow{i'} M\xleftarrow{o'} Z, t)$ is given by the usual composition of cospans,
\[
X\xrightarrow{j_Ni} N +_Y M \xleftarrow{j_Mo'} Z,
\]
together with the decoration
\[
1\cong 1\times 1\xrightarrow{s\times t} FN\times FM\xrightarrow{\varphi} F(N + M)\xrightarrow{F[j_N,j_M]} F(N +_Y M).
\]
\end{defn}

\begin{rmk}
We can see in the definition of $\FCospan$ why we need the decorations to be chosen through a lax monoidal functor; this structure is used to define composition of decorated cospans.
\end{rmk}

Trivially, we have an embedding $$\Cospan(\C)\hookrightarrow\FCospan$$ sending a cospan $X\xrightarrow{i} N \xleftarrow{o} Y$ to the same cospan with empty decoration
\[
\btk[column sep=small]
(X\rar["i"] & N & Y\lar["o"'], \ 1\rar["\varphi"] & F\emptyset\rar["F!"] & FN),
\etk
\]
where $!\colon \varnothing \to N$ is the unique such map. It is through this embedding that $\FCospan$ inherits a symmetric monoidal structure, and moreover, a hypergraph structure, from that of $\Cospan(\C)$ (see \cite[Thm. 3.4]{Fong15}).

\subsection{Decorated corelations}\label{corel}

Even though decorated cospans are useful for recording extra information present in open networks, they sometimes fail to be efficient, since they can carry redundant information that is inaccessible from the boundary. To solve this problem, {\it decorated corelations} were introduced in \cite{Fong18}. We now recall the definitions.

\begin{defn}
  A \define{factorisation system} $(\E,\M)$ in a category $\C$ consists of a pair of subcategories $\E,\M$ of $\C$ satisfying the following:
\begin{enumerate}[noitemsep, label=(\roman*)]
\item $\E$ and $\M$ contain all isomorphisms,
\item every morphism $f$ in $\C$ factors as $f=me$ for some $e\in\E$, $m\in\M$,
\item given factorisations $f=me, \ f'=m'e'$, for every $u,v$ such that $vf=f'u$, there exists a unique morphism $s$ making the diagram commute
    \bc\btk
    \bullet\rar["e"]\dar["u"'] & \bullet\rar["m"]\dar[dashed,"s"] & \bullet\dar["v"] \\
    \bullet\rar["e'"'] & \bullet\rar["m'"'] & \bullet
    \etk\ec
\end{enumerate}
In particular, note that the last condition implies that factorisations are unique up to unique isomorphism.
\end{defn}

When factoring a map $f\colon X\to N$ in a factorisation system, the notation we will use is
\[\btk[row sep=1.5ex]
& \overline{N}\ar[rd,"m"] & \\
X\ar[rr,"f"']\ar[ru,"e"] & & N
\etk\]

\begin{ex}\label{trivial_fact_sys}
Given any category $\C$, let $\mathcal{I}_\C$ denote the subcategory containing all objects of $\C$ and isomorphisms between them. Then, there are two factorisation systems one can always consider: $(\C,\mathcal{I}_\C)$ and $(\mathcal{I}_\C,\C)$.

When $\C$ is clear from context, we will denote $\mathcal{I}_\C$ simply by $\mathcal{I}$.
\end{ex}

\begin{ex}
A commonly used factorisation system in $\Set$ is the pair $(\E,\M)$ where $\E$ consists of all surjections and $\M$ of all injections.
\end{ex}

\begin{ex}
Along the same lines as the previous example, any abelian category admits a factorisation system $(\E,\M)$ in whch $\E$ is the subcategory of all epimorphisms and $\M$ consists of all monomorphisms.
\end{ex}

\begin{defn}
Let $\C$ be a finitely cocomplete category and $(\E,\M)$ a factorisation system in $\C$. An $(\E,\M)$-corelation is a cospan $X\xrightarrow{i} N \xleftarrow{o} Y$ such that the universal map from the coproduct of the feet to the apex, displayed below, belongs to the subcategory $\E$
\[\btk[row sep=2ex]
& N & \\
& X+Y\ar[u,dashed, "{[i,o]}"] & \\
X\ar[uur,"i",bend left]\ar[ur,"\iota_X"'] & & Y\ar[uul,"o"',bend right]\ar[ul,"\iota_Y"]
\etk\]
\end{defn}

\begin{ex}\label{trivial_corels}
For a simple yet illustrative example, one can quickly check that $(\C,\mathcal{I}_\C)$-corelations are just cospans in $\C$, and that there exists a unique $(\mathcal{I}_\C,\C)$-corelation from $X$ to $Y$; namely, $X\xrightarrow{\iota_X} X+Y\xleftarrow{\iota_Y} Y$.
\end{ex}

If we require $(\E,\M)$ to satisfy an additional property that will allow composition to work properly, these structures can be assembled into the category described in the following definition, as shown in \cite[Thm.~3.1]{Fong18}.

\begin{defn}\label{corels}
Let $\C$ be a finitely cocomplete category and $(\E,\M)$ a factorisation system in $\C$ such that $\M$ is stable under pushout. We can define a category $\Corel_{(\E,\M)}(\C)$ with the same objects as $\C$ and with isomorphism classes of $(\E,\M)$-corelations as morphisms.

Corelations $X\xrightarrow{i} N \xleftarrow{o} Y$ and $Y\xrightarrow{i'} M \xleftarrow{o'} Z$ compose by taking the pushout as usual, and then factoring the induced map $X+Z\to N+_Y M$ via the factorisation system $(\E,\M)$ as shown in the diagram
\[\btk
& & N+_Y M & & \\[1pt]
& & \overline{N+_Y M}\ar[u,"m"] & & \\[-6pt]
& N\ar[ruu,"j_N"] & X+Z\ar[u,"e"] & M\ar[luu,"j_M"'] & \\[1.5ex]
X\ar[ru,"i"]\ar[rru,"\iota_X"', near start] & & Y\ar[lu,"o", near start, crossing over] & & Z\ar[lu,"o'"']\ar[llu,"\iota_Z", near start] \ar[from=4-3, to=3-4,"i'"',near start, crossing over]
\etk\] The composite is then the corelation $X \xrightarrow{e\iota_X} \overline{N+_Y M} \xleftarrow{e\iota_Z} Z$.

We will often denote the category $\Corel_{(\E,\M)}(\C)$ simply by $\Corel(\C)$, if the factorisation system is clear from context.
\end{defn}

There exists a functor $$\Cospan(\C)\to\Corel(\C)$$ taking a cospan to its $\E$-part; that is, the cospan $X\xrightarrow{i} N \xleftarrow{o} Y$ is associated the corelation $X\xrightarrow{e\iota_X} \overline{N} \xleftarrow{e\iota_Y} Y$, where we have the factorisation $[i,o] = me$.
It is possible to give $\Corel(\C)$ the structure of a hypergraph category so that this functor is a hypergraph functor.

Just like in the case of cospans, we can talk about a notion of decoration when dealing with corelations, that allows us to keep track of extra information.

First, we define a category of ``restricted cospans'', where we specify the subcategories on which the left and right legs of the cospans are allowed to take values.

\begin{defn}
For $\C$ a finitely cocomplete category, and $\M\subseteq\C$ a subcategory stable under pushouts, we can define the category $\C\cp\M^\op$ whose objects are the same as in $\C$ and whose maps $X\to Y$ are isomorphism classes of cospans $X\xrightarrow{f} N \xleftarrow{m} Y$ with $f$ in $\C$ and $m$ in $\M$.

This inherits a symmetric monoidal category structure from the coproduct in $\C$, and as such, is a monoidal subcategory of $\Cospan(\C)$.
\end{defn}

\begin{ex}
For any finitely cocomplete $\C$, if we set $\M=\C$ then $\C\cp\M^\op$ is precisely $\Cospan(\C)$.
\end{ex}

\begin{defn}
Let $\C$ be a finitely cocomplete category, $\M\subseteq\C$ a subcategory stable under pushouts which is part of a factorisation system $(\E,\M)$ in $\C$, and consider a lax symmetric monoidal functor
\[
(F,\varphi)\colon(\C\cp\M^\op ,+)\to(\Set,\times).
\]

An \define{$F$-decorated corelation} is a pair
\[\left(
  \btk[row sep=2ex]
& N &\\
X\ar[ur,"i"] & & Y\ar[ul,"o"']
\etk, \hspace{0.5cm}
\btk[row sep=2ex]
FN\\
1\ar[u,"s"']
\etk
\right)\]
where $X\xrightarrow{i} N \xleftarrow{o} Y$ is an $(\E,\M)$-corelation.
\end{defn}

Decorated corelations are the morphisms in the following category, constructed in \cite[Thm. 5.8]{Fong18}.

\begin{defn}\label{FCorel}
Under the same hypotheses as above, we can define a hypergraph category $\FCorel$ whose objects are the same as in $\C$, and whose maps $X\to Y$ are isomorphism classes of decorated corelations. Composition of decorated corelations $(X\xrightarrow{i} N \xleftarrow{o} Y, 1\xrightarrow{s} FN)$ and $(Y\xrightarrow{i'} M\xleftarrow{o'} Z, 1\xrightarrow{t} FN)$ is given by the composition of corelations defined in Defn.\ \ref{corels}, $$X \xrightarrow{e\iota_X} \overline{N+_Y M} \xleftarrow{e\iota_Z} Z$$ together with the decoration
$$1\cong 1\times 1\xrightarrow{s\times t} FN\times FM\xrightarrow{\varphi} F(N + M)\xrightarrow{F[j_N,j_M]} F(N +_Y M)\xrightarrow{Fm^\op} F(\overline{N+_Y M}).$$
\end{defn}

\begin{rmk}
Once again, the definition of $\FCospan$ makes it clear why we need the decorations to be chosen through a lax monoidal functor, in this case, not from $\C$ but from $\C\cp \M^\op$: it provides the maps we need to compose decorations.

More explicitly, this is what provides us with the map $$F(N +_Y M)\xrightarrow{Fm^\op } F(\overline{N+_Y M})$$ used above; when composing the corelations we get a map $\overline{N+_Y M}\xrightarrow{m} N+_Y M$ (for reference, see the diagram in Defn.\ \ref{corels}), from which we can build the cospan $m^\op \coloneqq (\btk[cramped]N+_Y M\ar[r,equal] & N+_Y M & \overline{N+_Y M}\ar[l,"m"']\etk)$.
This is a morphism in $\C\cp\M^\op$ from $N+_Y M$ to $\overline{N+_Y M}$; we then apply $F$ to get the map $F(N+_Y M)\xrightarrow{Fm^\op } F(\overline{N+_Y M})$ that we need.
\end{rmk}

Let's elaborate on Example \ref{trivial_corels}, and study decorated corelations on the factorisation systems $(\mathcal{I},\C)$ and $(\C,\mathcal{I})$.

\begin{ex}\label{trivial_dec_corels1}
For the pair $(\mathcal{I},\C)$, we have $\C\cp \C^\op =\Cospan(\C)$, and thus decorated $(\mathcal{I},\C)$-corelations will be given through a lax monoidal functor $F\colon\Cospan(\C)\to\Set$.

For each pair of objects $X, Y$ in $\C$, a morphism in $\FCorel_{(\mathcal{I},\C)}$ from $X$ to $Y$ is a pair $$(X\xrightarrow{\iota_X} X+Y\xleftarrow{\iota_Y} Y, \, 1\xrightarrow{s} F(X+Y))$$ consisting of the unique $(\mathcal{I},\C)$-corelation from $X$ to $Y$ together with a decoration on the apex. We can thus ignore this first coordinate, and set $$ \FCorel_{(\mathcal{I},\C)}(X,Y)=F(X+Y).$$

Composition in this category is given as follows. Write $\comp$ for the cospan
\[
  X+Y+Y+Z \xrightarrow{\id_X+[\id_Y,\id_Y]+\id_Z} X+Y+Z \xleftarrow{[\iota_X,\iota_Z]} X+Z.
\]
Then composition is given by
\[
  F\comp\circ \varphi\colon F(X+Y) \times F(Y+Z) \to F(X+Z).
\]
\end{ex}

\begin{ex}\label{trivial_dec_corels2}
If instead we consider the factorisation system $(\C,\mathcal{I})$, we see that $\C\cp\mathcal{I}^\op \cong\C$ and so decorated $(\C,\mathcal{I})$-corelations will be given through a lax monoidal functor $F\colon\C\to\Set$.

Morphisms in $\FCorel_{(\C,\mathcal{I})}$ from $X$ to $Y$ are pairs $$(X\xrightarrow{i} N \xleftarrow{o} Y, \, 1 \xrightarrow{s} F(N))$$ where the induced map $X+Y\xrightarrow{[i,o]} N$ is in $\C$; this imposes no extra conditions, and so for this factorisation system we have $\FCorel_{(\C,\mathcal{I})}=\FCospan$.
\end{ex}

Observe that we have an embedding $\Corel(\C)\hookrightarrow\FCorel$ that sends a corelation $X\xrightarrow{i} N \xleftarrow{o} Y$ to the same corelation with empty decoration
\[\btk[column sep=small]
(X\rar["i"] & N & Y\lar["o"'], \ 1\rar["\varphi"] & F\emptyset\rar["F!"] & FN).
\etk\]
Analogously to decorated cospans, the hypergraph structure on $\FCorel$ is inherited from that on $\Corel(\C)$.

In this paper we show this construction is functorial. But first we have to define a category of decorating data.

\subsection{The category of decorating data}

At this point, we have two ways of constructing decorated categories: decorated
cospans, and decorated corelations. But decorated cospan categories are not
essential to the picture: in Example \ref{trivial_dec_corels2} we saw that
$\FCospan= \FCorel_{(\C,\I)}$. Thus it is enough just to think about decorated
corelations, which stem from lax monoidal functors $$F\colon(\C\cp \M^\op
,+)\to(\Set,\times)$$ for some category $\C$ and some factorisation system
$(\E,\M)$ in $\C$. These will be the components of our decorating data category.

\begin{defn}
Let $\FactSys$ denote the category whose objects are pairs $(\C,(\E,\M))$, where $\C$ is a finitely cocomplete category and $(\E,\M)$ a factorisation system in $\C$, and whose morphisms $(\C,(\E,\M))\to(\C',(\E',\M'))$ are given by finite colimit-preserving functors $A\colon C\to\C'$ such that $A(\M)\subseteq\M'$.

There exists a functor $$D\colon\FactSys^\op \to \Cat$$ taking a pair $(\C,(\E,\M))$ to the category $\Lax((\C\cp \M^\op ,+),(\Set,\times))$ of lax symmetric monoidal functors and monoidal natural transformations, and a map $A\colon(\C,(\E,\M))\to(\C',(\E',\M'))$ to $DA=-\circ A$; that is, precomposition with the functor $A$.
\end{defn}
\begin{defn}
  We introduce the category of decorating data $$\Data\coloneqq\int^{\FactSys^\op } D,$$ that is, the Grothendieck construction on the functor $D$. Recall that the objects of this category are tuples $$(\C,(\E,\M), F), \qquad \mbox{where }(\C,(\E,\M)) \in \FactSys,\quad F \in \Lax(\C\cp \M^\op ,\Set),$$
and the morphisms $(\C,(\E,\M), F)\to (\C',(\E',\M'), F')$ consist of pairs $(A,\alpha)$, where $A\colon\C\to\C'$ is a finitely cocontinuous functor satisfying $A(\M)\subseteq\M'$, and $\alpha\colon F\Rightarrow DA(F')$ is a monoidal natural transformation
\[\btk
\C\cp\M^\op \rar["F"]\dar["A"',"\ \ \ \Downarrow\alpha" near start] & \Set\\
\C'\cp {\M'}^\op \ar[ur,"F'"'] &
\etk\]
\end{defn}

\subsection{Decorating corelations is functorial}

We now proceed to assemble the decorated corelation construction, explained in Definition \ref{FCorel}, into a functor $\Data \to \Hyp$. We begin by recalling a way to obtain hypergraph functors between decorated corelation categories.

\begin{prop} \label{prop.deccorelfunctor}
Given a morphism $$(A,\alpha)\colon(\C, (\E,\M), F)\to(\C', (\E',\M'), F')$$ in $\Data$,
  we may define a hypergraph functor
  \[
    (A,\alpha)\Corel\colon \FCorel\longrightarrow F'\Corel
  \]
  as follows:
\begin{itemize}
  \item on objects, it takes $X\in\C$ to $AX\in\C'$;
  \item on morphisms, given an $F$-decorated corelation
    \[
      (X\xrightarrow{i} N\xleftarrow{o} Y, \, 1 \xrightarrow{s} FN)
    \]
    we factor the map $AX+AY\cong A(X+Y)\xrightarrow{A[i,o]} AN$ in $(\E',\M')$ as $A(X+Y)\xrightarrow{e'} \overline{AN}\xrightarrow{m'} AN$, and then consider the corelation
    \[
      AX\xrightarrow{e'\iota_{AX}} \overline{AN} \xleftarrow{e'\iota_{AY}} AY.
    \]
    For the decoration, we precompose
    $F'({m'}^\op )\colon F'AN\to F'\overline{AN}$ with $\alpha_N \colon FN\to F'AN$ to obtain an element $$t=\big(F'({m'}^\op )\circ \alpha_N \big)(s)\in F'\overline{AN}.$$
\end{itemize}
\end{prop}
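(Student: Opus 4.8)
The plan is to verify, in turn, that the assignment is well defined on isomorphism classes, that it is functorial, and that it carries across the hypergraph structure; the functoriality on composites is where the real work lies.

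\emph{Well-definedness.} First, the output really is a corelation: the universal map of $AX \xrightarrow{e'\iota_{AX}} \overline{AN} \xleftarrow{e'\iota_{AY}} AY$ is $[e'\iota_{AX}, e'\iota_{AY}] = e'$, which lies in $\E'$ by construction. Independence of the chosen representative follows because an isomorphism $f\colon N \to N'$ of decorated corelations is sent by $A$ to an isomorphism $Af$ intertwining $A[i,o]$ and $A[i',o']$; uniqueness of factorisations then yields a unique isomorphism $\overline{AN} \xrightarrow{\sim} \overline{AN'}$ commuting with the legs, and naturality of $\alpha$ together with functoriality of $F'$ shows it carries the decoration $t$ to $t'$.

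\emph{Functoriality on corelations.} The cleanest route is through the $\E$-reflection functor $r\colon\Cospan(\C)\to\Corel(\C)$ recalled above (sending a cospan to its $\E$-part) and its analogue $r'$; by Definition \ref{corels}, composition of corelations is cospan composition followed by $r$ (resp.\ $r'$). Since $A$ preserves finite colimits it induces a composition-preserving functor $\Cospan(A)\colon \Cospan(\C)\to\Cospan(\C')$, and on underlying corelations our construction is simply $r'\circ\Cospan(A)$. Preservation of composites then reduces, using functoriality of $r'$ and composition-preservation of $\Cospan(A)$, to the identity
\[
r'\big(\Cospan(A)(d)\big) = r'\big(\Cospan(A)(\widehat{r(d)})\big)
\]
for every cospan $d$ of $\C$, where $\widehat{r(d)}$ is the reflected cospan. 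Writing the universal map of $d$ as $u = me$ with $e\in\E$, $m\in\M$, the map $\widehat{r(d)}$ has universal map $e$ while $\Cospan(A)(d)$ has universal map $Au = (Am)(Ae)$; since $Am\in\M'$, the elementary fact that post-composing with an $\M'$-map leaves the $\E'$-part of a factorisation unchanged (immediate from uniqueness of factorisations) forces $r'$ of the two cospans to agree. Identities are handled identically, as $\Cospan(A)$ and $r'$ preserve them.

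\emph{Decorations and hypergraph structure.} On decorations, one checks that transporting along $\alpha$ and then composing in $F'\Corel$ agrees with the image under $\alpha$ of the composite decoration in $\FCorel$, once both are carried across the canonical isomorphism of apexes produced above. Unwinding the two composition formulas of Definition \ref{FCorel}, this is a diagram chase whose commuting cells are precisely the naturality squares of $\alpha$ at the structure map $[j_N,j_M]$ and at the cospan $m^{\op}$, together with the monoidality of $\alpha$ relating $\varphi$ and $\varphi'$; these match because $A$ preserves pushouts. For the hypergraph structure, cocontinuity of $A$ supplies the coherence isomorphisms $AX+AY\cong A(X+Y)$ and $\emptyset'\cong A\emptyset$, giving the strong symmetric monoidal structure (the coherence axioms reduce to those for $A$), and the empty decorations are preserved since $\alpha$ is monoidal. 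Finally, the Frobenius maps on each object of $\FCorel$ are the empty-decorated images of the canonical Frobenius maps of $\Cospan(\C)$, built from the fold map $[\id,\id]$, the map $\eta=\ !$, and their opposites; as $A$ preserves coproducts, the initial object and fold maps, $(A,\alpha)\Corel$ sends these to the corresponding canonical Frobenius maps on $AX$, which is exactly the condition in the definition of hypergraph functor.

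\emph{Main obstacle.} The delicate point is the interaction of the two factorisation systems under $A$: since $A$ is only assumed to preserve $\M$ and finite colimits, not $\E$, the $\E'$-part $e'$ of $A[i,o]$ is a genuine re-factorisation rather than $Ae$, and it is this re-factorisation that must be reconciled with the one obtained by composing after applying $A$. The reflection-plus-uniqueness argument above makes this clean at the level of corelations; the residual effort is the bookkeeping confirming that the decoration carried by $F'({m'}^{\op})\circ\alpha_N$ is transported consistently through the same isomorphisms, which is where naturality and monoidality of $\alpha$ are indispensable.
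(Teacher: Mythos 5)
Your proposal cannot be compared step-by-step with the paper's proof, because the paper gives none: its entire proof of Proposition~\ref{prop.deccorelfunctor} is the citation \cite[Prop.~6.1]{Fong18}. You have therefore supplied a genuine argument where the authors supply a pointer, and your argument is essentially correct. Its two structural moves are good ones. First, on underlying corelations the construction is exactly $r'\circ\Cospan(A)$, where $r'\colon\Cospan(\C')\to\Corel(\C')$ is the $\E'$-part functor recalled after Definition~\ref{corels}, and $\Cospan(A)$ is well defined and composition-preserving because $A$ preserves pushouts; functoriality then reduces to your lemma that post-composition by a morphism of $\M'$ leaves the $\E'$-part of a factorisation unchanged. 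This is precisely what reconciles the re-factorisation of $A[i,o]$ with $Ae$ (since $A$ is \emph{not} assumed to preserve $\E$), and you correctly identify it as the delicate point. Second, for the decorations your list of commuting cells is the right one: naturality of $\alpha$ at $[j_N,j_M]$ and at $m^{\op}$, monoidality of $\alpha$ and of $F'$, pushout-preservation by $A$ (which, via stability of $\M'$ under pushout, also guarantees that the comparison map from the pushout of the $\E'$-parts into $A(N+_Y M)$ lies in $\M'$), and transport along the canonical isomorphism of apexes, whose defining property under uniqueness of factorisations is exactly what makes the two $F'$-images coincide. These checks, together with preservation of monoidal products of morphisms, are only sketched in your write-up, but the reductions you state are valid and the chase does close. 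For contrast, the paper's own strategy for the surrounding functoriality statement (Theorem~\ref{thm.deccorelfunctorial}) is different again: it factors $(-)\Corel$ as $\algtohyp\circ\Kan$ (Theorem~\ref{thm.kan}), reducing the case of a general factorisation system, by left Kan extension, to the trivial one where the verification is easy (Lemma~\ref{lem.deccospanalgfunctorial}); your proof buys self-containedness at the price of exactly the bookkeeping that this reduction is designed to avoid.
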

\begin{proof}
This can be found in \cite[Prop.\ 6.1]{Fong18}.
\end{proof}

\begin{thm} \label{thm.deccorelfunctorial}
There exists a functor
\[
  (-)\Corel\colon\Data\longrightarrow\Hyp
\]
which, on objects, takes decorating data  $(\C, (\E,\M), F)$ to the hypergraph category $\FCorel$, and whose action on morphisms is described in Prop.\ \ref{prop.deccorelfunctor}.
\end{thm}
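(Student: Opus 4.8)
Proposition~\ref{prop.deccorelfunctor} already supplies, for each morphism of $\Data$, a genuine hypergraph functor, so my plan is to verify only the two functoriality axioms: that $(-)\Corel$ preserves identities and composites. Since each decorated corelation category carries the monoidal and Frobenius structure inherited from $\Cospan$, and the hypergraph functors here act on that structure through the chosen finite-colimit comparisons of the underlying functors, I would reduce the whole verification to agreement on objects and on morphisms (the coherence maps then matching as the corresponding composite of comparison isomorphisms on both sides). I would first record the composition law in the Grothendieck construction $\Data=\int^{\FactSys^\op}D$: the composite of $(A,\alpha)\colon(\C,(\E,\M),F)\to(\C',(\E',\M'),F')$ and $(B,\beta)\colon(\C',(\E',\M'),F')\to(\C'',(\E'',\M''),F'')$ is $(BA,\gamma)$, where $\gamma$ is the vertical composite of $\alpha$ with the whiskering $DA(\beta)=\beta A$, so that on components $\gamma_N=\beta_{AN}\circ\alpha_N$ (writing $A,B$ also for the induced functors on restricted cospans, as in the definition of $D$).

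For the identity $(\id_\C,\id_F)$, I would observe that the defining map $[i,o]$ of an $(\E,\M)$-corelation already lies in $\E$, so $[i,o]=\id_N\circ[i,o]$ is a factorisation with invertible $\M$-part; the resulting corelation is $X\xrightarrow{i}N\xleftarrow{o}Y$ unchanged, and since $F(\id_N^\op)=\id$ the decoration is untouched. Hence $(\id,\id)\Corel=\id_{\FCorel}$.

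For composites I would evaluate both candidate functors on a representative $(X\xrightarrow{i}N\xleftarrow{o}Y,\,s)$; agreement on objects is immediate. Tracing $(B,\beta)\Corel\circ(A,\alpha)\Corel$ factors $A[i,o]=m'e'$ in $(\E',\M')$ (with intermediate decoration $t=F'(m'^\op)\alpha_N(s)$) and then factors $B(e')=m''e''$ in $(\E'',\M'')$, landing on the corelation with apex $\overline{B\overline{AN}}$; tracing $(BA,\gamma)\Corel$ factors $BA[i,o]=\tilde m\tilde e$ in one step, landing on apex $\overline{BAN}$. The key step—what I expect to be the crux—is a \emph{refactorisation lemma}: from $BA[i,o]=B(m')B(e')=(B(m')m'')\,e''$ with $e''\in\E''$ and $B(m')m''\in\M''$ (here I use $B(\M')\subseteq\M''$ and closure of $\M''$ under composition, but crucially \emph{not} any hypothesis that $B$ preserve $\E$), this is a second $(\E'',\M'')$-factorisation of $BA[i,o]$. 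Uniqueness of factorisations then produces a canonical isomorphism $\theta\colon\overline{BAN}\to\overline{B\overline{AN}}$ with $\theta\tilde e=e''$ and $(B(m')m'')\theta=\tilde m$; the first relation exhibits $\theta$ as an isomorphism of the underlying corelations.

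It remains to match the decorations along $\theta$, and this is where the bookkeeping concentrates. I would first verify, working in $\C''\cp{\M''}^\op$, the cospan identity $\theta\circ\tilde m^\op=m''^\op\circ B(m'^\op)$ (both sides being the cospan $BAN\xrightarrow{\id}BAN\xleftarrow{B(m')m''}\overline{B\overline{AN}}$, via $\tilde m\theta^{-1}=B(m')m''$). Applying $F''$ to this identity, using functoriality of $F''$, then invoking naturality of $\beta$ at $m'^\op\colon AN\to\overline{AN}$, i.e.\ $\beta_{\overline{AN}}\circ F'(m'^\op)=F''(B(m'^\op))\circ\beta_{AN}$, together with $\gamma_N=\beta_{AN}\circ\alpha_N$, I would obtain
\[
F''(\theta)\,F''(\tilde m^\op)\,\gamma_N(s)=F''(m''^\op)\,\beta_{\overline{AN}}\,F'(m'^\op)\,\alpha_N(s)=F''(m''^\op)\,\beta_{\overline{AN}}(t).
\]
This shows $\theta$ is an isomorphism of \emph{decorated} corelations from the one-step output to the two-step output, so the two agree as morphisms of $F''\Corel$, completing preservation of composites. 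The main obstacle is exactly this last coordination: arranging $\theta$ so that it is at once a corelation isomorphism and an intertwiner of decorations, which is what glues the refactorisation lemma to the naturality of $\beta$ and to functoriality of $F''$.
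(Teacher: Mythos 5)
Your proof is correct, but it is not the paper's proof: it is precisely the ``straightforward, but lengthy'' elementary argument that the paper explicitly declines to give. You verify functoriality directly---identities via the observation that $[i,o]\in\E$ admits the factorisation $\id_N\circ[i,o]$, and composites via a refactorisation lemma ($BA[i,o]=(B(m')m'')e''$ is a second $(\E'',\M'')$-factorisation, using $B(\M')\subseteq\M''$ and closure of $\M''$ under composition), with the unique comparison isomorphism $\theta$ then shown to intertwine the decorations by combining the cospan identity $\theta\circ\tilde m^\op=m''^{\op}\circ B(m'^{\op})$ in $\C''\cp{\M''}^\op$ with naturality of $\beta$ at $m'^{\op}$. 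The paper instead proves the theorem by factoring the construction: it shows $(-)\Corel=\algtohyp\circ\Kan$ (Lemmas~\ref{lem.corelkanobjects} and \ref{lem.corelkanmorphisms}), where $\algtohyp$ is functorial by the easy check of Lemma~\ref{lem.deccospanalgfunctorial} (trivial factorisation system, so functoriality reduces to one naturality square), and $\Kan$ is functorial by Proposition~\ref{def.kan}; functoriality of $(-)\Corel$ is then automatic. The trade-off: your route is self-contained, needs no Kan extensions and no category $\calg$, and localises all the work in the refactorisation-plus-naturality step; the paper's route requires the machinery of Sections~\ref{section.cospanalg} and \ref{section.hyp.and.cospanalg} but yields the stronger structural result (Theorem~\ref{thm.kan}) and pushes all factorisation-system bookkeeping into the one-time explicit computation of $\Lan F$ (Proposition~\ref{prop.kanexplicit}), rather than redoing it inside the functoriality proof. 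One small point to tighten in your write-up: you should say explicitly that the comparison isomorphisms $BA(X+Y)\cong BAX+BAY$ are absorbed into the factored maps (as the paper does with its ``$\sim$'' maps), and that equality of the two composites as \emph{hypergraph} functors also requires the monoidal coherence data to agree, which holds because the coherence isomorphisms of $(BA,\gamma)\Corel$ are by construction the composites of those of the two factors---your parenthetical remark gestures at this but it deserves one sentence of justification.
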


We will not prove this just yet. It is straightforward, but lengthy, to give an elementary proof: the map respects composition of corelations and composition of decorations due to the universal property of the factorisation systems. We, however, choose to defer this proof to Section \ref{section.corel.functor}, where we show that this correspondence can be expressed as the composition of two functors, and is therefore a functor itself.


\section{The category $\calg$}\label{section.cospanalg}

In this section, we single out a special subcategory
$$\calg\hookrightarrow\Data.$$ This embedding turns out to be part of an
adjunction, which makes $\calg$ a reflective subcategory of $\Data$, with the left adjoint being a Kan extension functor.

\subsection{The subcategory $\calg$}

If we consider any finitely cocomplete category $\C$ as equipped with the trivial factorisation system $(\I,\C)$, we may view the category of finitely cocomplete categories and finite colimit-preserving functors as a subcategory of $\FactSys$. Restricting our Grothendieck construction to this subcategory gives the subcategory $\calg\subseteq\Data$.

\begin{defn}
Write $\calg$ for the full subcategory of $\Data$ whose objects are the tuples $(\C, (\I,\C), F)$, with trivial factorisation system $(\I,\C)$.
\end{defn}

More explicity still, $\calg$ is the category whose objects are simply pairs $(\C,F)$, where $\C$ is a finitely cocomplete category, and $F\colon \Cospan(\C)\to\Set$ is a lax monoidal functor.

By definition, we have an embedding $$\iota\colon \calg \hookrightarrow\Data.$$ It is also possible to construct a functor in the other direction, which we call $\Kan$ since it consists of taking the left Kan extension of the lax monoidal functors present in $\Data$.

\begin{prop} \label{def.kan}
There exists a functor $$\Kan\colon\Data\to\calg$$ taking decorating data $(\C, (\E,\M), F)$ to the cospan algebra $(\C,\Lan F)$, where $\Lan F$ is the left Kan extension of $F$ along the inclusion $\C\cp \M^\op \hookrightarrow\Cospan(\C)$:
\[\btk
\C\cp\M^\op \rar["F"]\dar[hookrightarrow," \ \ \ \Downarrow\kappa" near start] & \Set\\
\Cospan(\C)\ar[ru,"\Lan F"']
\etk\]

For a morphism $(A,\alpha)\colon (\C, \, (\E,\M), \, F)\to (\C', \, (\E',\M'), \, F')$, the associated map of cospan algebras
\[
\Kan(A,\alpha)\colon (\C,\, \Lan F)\to(\C', \, \Lan F')
\]
is the pair $(A,\beta)$, where $\beta\colon \Lan F\Rightarrow\Lan F' A$ is the natural transformation given by the universal property of $(\Lan F,\kappa)$.
\end{prop}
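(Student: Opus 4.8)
The plan is to verify that the assignment described in Proposition~\ref{def.kan} is well-defined on objects and morphisms, and that it respects identities and composition. On objects the only content is that the left Kan extension $\Lan F$ of $F$ along the inclusion $j\colon\C\cp\M^\op\hookrightarrow\Cospan(\C)$ exists \emph{and} carries a canonical lax symmetric monoidal structure, so that $(\C,\Lan F)$ is a legitimate object of $\calg$. First I would address existence: since we assume all categories are essentially small and $\Set$ is cocomplete, the pointwise left Kan extension exists and is computed by the usual colimit formula $\Lan F(c)=\colim_{(d,\,jd\to c)} Fd$ over the comma category $j{\downarrow}c$. The universal $2$-cell $\kappa\colon F\Rightarrow (\Lan F)\circ j$ is part of this data. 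The genuinely substantive point on objects is the monoidal structure: because $j$ is strong monoidal (the inclusion $\C\cp\M^\op\hookrightarrow\Cospan(\C)$ preserves the coproduct-induced tensor) and $F$ is lax symmetric monoidal, one invokes the standard result that a pointwise left Kan extension along a strong symmetric monoidal functor inherits a canonical lax symmetric monoidal structure, provided the tensor in $\Set$ (here $\times$) preserves the relevant colimits in each variable. Since $\times$ preserves colimits in each variable in $\Set$, this hypothesis is met, and $\Lan F$ becomes lax symmetric monoidal with $\kappa$ a monoidal natural transformation. I would cite or briefly sketch this ``monoidal Kan extension'' lemma rather than rebuild it.

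On morphisms, given $(A,\alpha)\colon(\C,(\E,\M),F)\to(\C',(\E',\M'),F')$, I would construct the natural transformation $\beta\colon\Lan F\Rightarrow (\Lan F')\circ A$, where by abuse $A$ denotes the induced strong monoidal functor $\Cospan(\C)\to\Cospan(\C')$ (well-defined because $A$ preserves finite colimits, hence pushouts and the coproduct). The key diagram is that $A$ restricts to a functor $\C\cp\M^\op\to\C'\cp{\M'}^\op$ (this uses exactly $A(\M)\subseteq\M'$), and squares commute relating the two inclusions $j,j'$ with $A$. To build $\beta$ I would use the universal property of $(\Lan F,\kappa)$: it suffices to supply a $2$-cell $F\Rightarrow (\Lan F')\circ A\circ j = (\Lan F')\circ j'\circ A|_{\C\cp\M^\op}$, and such a cell is assembled by whiskering $\kappa'\colon F'\Rightarrow(\Lan F')\circ j'$ by the restricted $A$ and pasting with $\alpha\colon F\Rightarrow F'\circ A|$. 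The universal property then yields a unique $\beta$ making the evident triangle commute. I would then check that $\beta$ is \emph{monoidal}: this follows because $\kappa,\kappa'$ are monoidal, $\alpha$ is monoidal, and $A$ is strong monoidal, so the pasted cell is monoidal and uniqueness transports monoidality to $\beta$.

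The main obstacle I anticipate is the functoriality bookkeeping rather than any conceptual difficulty: verifying $\Kan(\id)=\id$ and $\Kan((A',\alpha')\circ(A,\alpha))=\Kan(A',\alpha')\circ\Kan(A,\alpha)$. Both reduce to uniqueness in the universal property of the Kan extensions involved. For identities, the cell defining $\beta_{\id}$ is the canonical one, so uniqueness forces $\beta_{\id}=\id$. For composites, I would paste the two defining diagrams for $\beta$ and $\beta'$ and show the resulting $2$-cell satisfies the universal property characterising the $\beta$ of the composite morphism; since that characterisation pins the transformation uniquely, the two agree. The place to be careful is the compatibility of the strong monoidal structures on the induced cospan functors under composition (i.e.\ that forming $\Cospan(-)$ is itself functorial on finitely cocontinuous functors and respects composition up to the canonical coherence isomorphisms), together with checking that each $\beta$ is genuinely a natural transformation of lax monoidal functors and not merely of underlying functors. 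Once these coherences are recorded, functoriality is immediate, and this verification also supplies exactly the adjunction unit/counit data needed later, so I would phrase the construction to make the universal property of $\Lan F$ transparent for reuse.
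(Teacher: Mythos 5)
Your proposal is correct and follows essentially the same route as the paper: the paper also treats existence as automatic, reduces the substantive content to a monoidal Kan extension criterion (citing Fritz--Perrone, whose hypothesis amounts to your observation that $\times$ preserves the relevant colimits, i.e.\ that $\Lan F\times\Lan F$ is a Kan extension of $F\times F$), obtains $\beta$ and its monoidality from the universal property, and dismisses functoriality as a routine check.
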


\begin{proof}
In order for this definition to be correct, we must make sure that the image of $\Kan$ lands in $\calg$ as claimed; that is, that $(\C, \Lan F)$ is a cospan algebra and the map $(A,\beta)\colon (\C,\Lan F)\to (\C',\Lan F')$ defined above is a morphism of cospan algebras.

This amounts to proving that the functor $\Lan F$ is lax monoidal, and that the natural transformation $\beta$ given by the universal property is also monoidal, which is ensured by Proposition 1.1 (a) and (c) in \cite{FP}. Checking that our setting fits the hypotheses of this result is mostly a matter of checking $\Lan F \times \Lan F$ is a Kan extension of $F \times F$; this is straightforward, and can be done analogously to our proof of Proposition \ref{prop.kanexplicit}.

Finally, the functoriality of $\Kan$ is a routine check.
\end{proof}

We end this section with the following result, showing that the two functors we just defined are adjoints.

\begin{prop}
The functor $\Kan$ is left adjoint to $\iota$.
\[\btk
\Data\rar[shift left=5pt, "\Kan" pos=0.45,""{name=a}] & \calg\lar[shift left=5pt,"\iota" pos=0.45,""{name=b}] \ar[phantom, from=a, to=b,"\scriptstyle\bot"]
\etk\]
\end{prop}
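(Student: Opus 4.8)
The plan is to establish the adjunction $\Kan \dashv \iota$ by exhibiting a unit and counit, or equivalently by verifying the universal property of $\Kan$ as a left adjoint to the full embedding $\iota$. Since $\iota$ is a full and faithful inclusion of $\calg$ into $\Data$, the cleanest route is to show that for each object $(\C,(\E,\M),F)$ of $\Data$, the left Kan extension construction $\Kan(\C,(\E,\M),F) = (\C, \Lan F)$ is the universal approximation from the left by an object of $\calg$. Concretely, I would exhibit the unit $\eta_{(\C,(\E,\M),F)}\colon (\C,(\E,\M),F) \to \iota\Kan(\C,(\E,\M),F)$ in $\Data$ and check the required universal property.

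First I would identify the unit. By definition of $\Kan$, the object $\iota\Kan(\C,(\E,\M),F)$ is $(\C,(\I,\C),\Lan F)$, and since the underlying finitely cocomplete category is unchanged, the functor-component of the unit should be $\id_\C$, which trivially preserves finite colimits and satisfies $\id_\C(\M)\subseteq\C$. The natural-transformation component of the unit is precisely the universal $2$-cell $\kappa\colon F \Rightarrow (\Lan F)\circ j$ of the left Kan extension along the inclusion $j\colon \C\cp\M^\op \hookrightarrow \Cospan(\C)$, as displayed in Proposition~\ref{def.kan}. Because a morphism in $\Data$ from $(\C,(\E,\M),F)$ to $(\C,(\I,\C),\Lan F)$ consists of a finitely cocontinuous $\M$-preserving functor together with a monoidal natural transformation $F \Rightarrow D(\id_\C)(\Lan F) = (\Lan F)\circ j$, the pair $(\id_\C,\kappa)$ is exactly such a morphism, provided $\kappa$ is monoidal; this last point is already secured in the proof of Proposition~\ref{def.kan} via the cited result of \cite{FP}.

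Next I would verify the universal property. Given any object $(\D, G)$ of $\calg$ and any morphism $(A,\alpha)\colon (\C,(\E,\M),F) \to \iota(\D,G) = (\D,(\I,\D),G)$ in $\Data$, I must produce a unique morphism $(B,\beta)\colon \Kan(\C,(\E,\M),F) \to (\D,G)$ in $\calg$ with $\iota(B,\beta)\circ \eta = (A,\alpha)$. The functor-component is forced: composing with $\eta$ on functors gives $B\circ\id_\C = A$, so $B=A$. The natural-transformation component $\beta\colon \Lan F \Rightarrow G\circ\Cospan(A)$ is then obtained from the $1$-dimensional universal property of the left Kan extension: the datum $\alpha\colon F \Rightarrow (G\circ\Cospan(A))\circ j$ factors uniquely through $\kappa$ as $\beta\cdot(\kappa)$, and this is the content of $\Lan F$ being the left Kan extension along $j$. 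I would then check that the $\beta$ so obtained is \emph{monoidal}, again invoking the monoidality part of Proposition~1.1 of \cite{FP}, and that the triangle identity $\iota(B,\beta)\circ\eta = (A,\alpha)$ holds, which unwinds to the equation $\beta\ast\kappa = \alpha$ guaranteeing the factorisation.

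The main obstacle, and the step requiring genuine care rather than formal bookkeeping, is ensuring the universal property of the left Kan extension interacts correctly with the \emph{monoidal} and \emph{lax} structure throughout: the objects of $\calg$ are lax monoidal functors, the morphisms are monoidal natural transformations, and the Kan extension must be taken in a way compatible with the symmetric monoidal structures on $\C\cp\M^\op$ and $\Cospan(\C)$. In particular I must confirm that the uniquely induced $\beta$ is not merely a natural transformation but a monoidal one, and that composition in $\calg$ matches the composition inherited from $\Data$. Once the monoidality of $\kappa$ and of the induced factorisations is in hand---which is exactly what \cite[Prop.~1.1]{FP} provides, and which was already used to make $\Kan$ well-defined---the remaining verifications are routine applications of the strict universal property of Kan extensions, so I expect no further difficulty.
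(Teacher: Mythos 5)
Your proposal is correct and rests on exactly the same foundation as the paper's proof: the adjunction $\Lan \dashv (-\circ j)$ between $\Lax(\C\cp\M^\op,\Set)$ and $\Lax(\Cospan(\C),\Set)$, the monoidality guarantees of \cite{FP}, and the very same unit $(\id_\C,\kappa)$. The only difference is presentational --- the paper imports the unit \emph{and} counit of these fiberwise adjunctions wholesale and then checks the triangle identities and naturality (observing the counit is the identity), whereas you verify the universal-arrow property of the unit pointwise, which makes naturality and the triangle identities automatic.
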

\begin{proof}
  Recall that left Kan extension is left adjoint to precomposition giving, for each $\C\cp\M^\op \hookrightarrow \Cospan(\C)$,
  \begin{equation} \label{eq.lanadj}
    \btk
\Lax(\C\cp\M^\op,\Set) \rar[shift left=5pt, "\Lan" pos=0.45,""{name=a}] &
\Lax(\Cospan(\C),\Set) \lar[shift left=5pt,""{name=b}]
\ar[phantom, from=a, to=b,"\scriptstyle\bot"]
\etk
\end{equation}
Thus for the unit and counit of the adjunction $\Kan \vdash \iota$, we may use those of the adjunctions \eqref{eq.lanadj}. These then immediately obey the triangle equations, the unit is natural because precomposition with $\C\cp\M^\op \hookrightarrow \Cospan(\C)$ is natural in $\FactSys$ (and the universal property of left Kan extension), and the counit is natural because it is in fact the identity.
%
%
\end{proof}

In particular, the above result implies that $\calg$ is a reflective subcategory of $\Data$.

\subsection{An explicit formula for $\Kan$}

In Proposition \ref{def.kan} above, the functor $\Kan\colon \Data\to\calg$ was defined rather abstractly, making use of the universal property of left Kan extensions. We wish to give a more explicit description of this functor, that will allow us to compare it to other constructions and get a good sense of how this fits into our machinery for building hypergraph categories.

Recall, for example from \cite[Thm.~X.3.1]{Mac}, that in a finitely cocomplete category left Kan extensions can be computed by the formula $$\Lan F(-)=\colim(\iota\mathord{\downarrow}(-)\xrightarrow{\text{proj}} \C\cp\M^\op \xrightarrow{F} \Set).$$  A careful inspection of this colimit yields the following two propositions; elementary proofs are also provided in Appendix~\ref{appendix}.

\begin{prop}[$\Kan$ on objects] \label{prop.kanexplicit}
  Let $(\C,(\E,\M),F)$ be an object on $\Data$. Then the left Kan extension of $F$ along the embedding $\C\cp \M^\op \hookrightarrow\Cospan(\C)$ is the functor $\Lan F\colon \Cospan(\C)\to\Set$ which takes an object $X$ in $\C$ to $$\Lan F X=\{(X\xrightarrow{e} N, 1\xrightarrow{s}FN)\},$$ where $e$ represents an isomorphism class of objects in $X\mathord\downarrow\E$.

  Given a morphism $f=(X\xrightarrow{i} M\xleftarrow{o} Y)$ in $\Cospan(\C)$ and an element $(X\xrightarrow{e} N, 1\xrightarrow{s} FN)$ in $\Lan FX$, we compose $e^\op$ with $f$ to get a cospan $N\xrightarrow{j_N} N+_X M\xleftarrow{j_Mo} Y$. Factoring the right leg of this cospan in $(\E,\M)$, we obtain maps $$Y\xrightarrow{e'} \overline{N+_X M} \xrightarrow{m} N+_X M.$$
We then define
\begin{align*}
  \Lan F (f) \colon \Lan F X &\longrightarrow \Lan F Y \\
(X\xrightarrow{e} N,\, s) &\longmapsto (Y\xrightarrow{e'} \overline{N+_X M},\, F(m^\op )F(j_N)(s)).
\end{align*}

Furthermore, the associated natural transformation
\[\btk
\C\cp\M^\op \dar[hookrightarrow, "\ \ \ \Downarrow\kappa" near start ]\rar["F"] & \Set \\
\Cospan(\C)\ar[ru,"\Lan F"']
\etk\]
is defined on each component by
\begin{align*}
\kappa_X \colon FX&\longrightarrow\Lan FX \\
s&\longmapsto \btk[column sep=small,ampersand replacement=\&] (X\rar[equal] \& X, s). \etk
\end{align*}
\end{prop}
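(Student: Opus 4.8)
The plan is to evaluate, by hand, the pointwise colimit recalled just above,
\[
\Lan F\, X=\colim\bigl(\iota\mathord\downarrow X\xrightarrow{\mathrm{proj}}\C\cp\M^\op\xrightarrow{F}\Set\bigr).
\]
Since colimits in $\Set$ are quotients of disjoint unions, an element of the right-hand side is represented by an object $(A,g)$ of the comma category $\iota\mathord\downarrow X$---an object $A$ of $\C$ together with a morphism $g\colon A\to X$ in $\Cospan(\C)$, say the cospan $A\xrightarrow{p}P\xleftarrow{q}X$---decorated by an element $s\in FA$; two such data are identified precisely when connected by the comma morphisms, namely the $\C\cp\M^\op$-maps $h\colon A\to A'$ satisfying $g'\circ h=g$, along which $s\in FA$ is sent to $F(h)(s)\in FA'$.

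First I would show that every class admits a \emph{normal form} $(X\xrightarrow{e}N,s)$ with $e\in\E$, using two reduction moves, each realised by an honest comma morphism (and hence class-preserving). The left leg $p$, regarded as the $\C\cp\M^\op$-morphism $\bigl(A\xrightarrow{p}P\xleftarrow{=}P\bigr)$, is a comma map $(A,g)\to(P,q^\op)$ because $q^\op\circ p=g$ in $\Cospan(\C)$; it replaces the representative by $(P,q^\op,F(p)(s))$. Next, factoring $q=me$ with $e\colon X\to\overline X$ in $\E$ and $m\colon\overline X\to P$ in $\M$, the cospan $m^\op=\bigl(P\xleftarrow{=}P\xleftarrow{m}\overline X\bigr)$ is a legitimate $\C\cp\M^\op$-morphism (its nontrivial leg lies in $\M$) and a comma map $(P,q^\op)\to(\overline X,e^\op)$, since $e^\op\circ m^\op=q^\op$. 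This brings every class to the asserted shape, with apex $\overline X$ and decoration $F(m^\op)F(p)(s)$.

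The crux is to show these normal forms are unique up to the stated isomorphism: $(X\xrightarrow{e}N,s)$ and $(X\xrightarrow{e'}N',s')$ name the same class exactly when there is an isomorphism $\phi\colon N\to N'$ with $\phi e=e'$ and $F(\phi)(s)=s'$. The ``if'' direction is immediate, since such a $\phi$ is a comma isomorphism. For the converse I would verify the universal property of the candidate set $L$ of isomorphism classes of pairs $(X\xrightarrow{e}N,s)$, equipped with the cocone whose leg at $(A,g)$ is the normalisation map of the previous paragraph. Given any cocone $\mu$ to a set $S$, compatibility of $\mu$ with the two comma moves forces the factorisation $L\to S$ to be $[(X\xrightarrow{e}N,s)]\mapsto\mu_{(N,e^\op)}(s)$, which is well defined because $\mu$ is compatible with the comma isomorphism $\phi$, and unique because the cocone legs already surject onto $L$. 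The one genuinely substantial point---and where I expect the real work to lie---is checking that this assignment really is a cocone, i.e.\ that normalisation is compatible with an \emph{arbitrary} comma morphism $h$. This reduces to a diagram chase in which the uniqueness of $(\E,\M)$-factorisations (the orthogonality axiom), together with the stability of $\M$ under pushout that keeps every intermediate cospan inside $\C\cp\M^\op$, produces the comparison isomorphism between apexes; functoriality of $F$ on $\C\cp\M^\op$ then transports the decorations correctly.

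Finally, the explicit descriptions of $\Lan F$ on morphisms and of $\kappa$ read off immediately. Functoriality of the colimit sends a cospan $f=(X\xrightarrow{i}M\xleftarrow{o}Y)$ to post-composition $g\mapsto f\circ g$ followed by renormalisation; applied to a normal form $(X\xrightarrow{e}N,s)$, that is, to the comma object $(N,e^\op)$, post-composition yields precisely $N\xrightarrow{j_N}N+_X M\xleftarrow{j_M o}Y$, and the two reduction moves then give $\bigl(Y\xrightarrow{e'}\overline{N+_X M},\,F(m^\op)F(j_N)(s)\bigr)$, as claimed. The transformation $\kappa_X$ is the colimit coprojection at the object $(X,\id_X)$ of $\iota\mathord\downarrow X$, whose normal form is $(X\xrightarrow{=}X,s)$, giving $\kappa_X(s)=(X\xrightarrow{=}X,s)$.
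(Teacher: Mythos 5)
Your proposal is correct, and it takes a genuinely different route from the paper's own proof. You evaluate the pointwise colimit formula $\Lan F\,X=\colim\bigl(\iota\mathord\downarrow X\to\C\cp\M^\op\xrightarrow{F}\Set\bigr)$ head-on, exhibiting a normal form $(X\xrightarrow{e}N,s)$ in each equivalence class and then checking that the set of normal forms, with the normalisation cocone, has the colimit universal property. The paper never touches the comma category: it takes the stated formulas for $\Lan FX$, $\Lan F(f)$ and $\kappa$ as a candidate and verifies the universal property of the left Kan extension directly, defining, for each $G\colon\Cospan(\C)\to\Set$ with $\gamma\colon F\Rightarrow G\circ\iota$, the transformation $\beta_X\colon(X\xrightarrow{e}N,s)\mapsto G(e^\op)\gamma_N(s)$, and checking the factorisation equation, naturality of $\beta$, and its uniqueness. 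Your route explains where the formula comes from and gives functoriality of $\Lan F$ for free (it is a colimit construction from the start), at the cost of analysing the generated equivalence relation; the paper's route is shorter and avoids comma-category bookkeeping, but the formula appears ready-made and the well-definedness and functoriality of the explicitly described $\Lan F$ are left as tacit routine checks. The step you flag as the crux---compatibility of normalisation with an arbitrary comma morphism---does go through with exactly the ingredients you name: writing the comma morphism as a cospan whose right leg lies in $\M$, stability of $\M$ under pushout ensures that both candidate apex maps are $\M$-morphisms, so they are the $\M$-parts of two $(\E,\M)$-factorisations of one and the same morphism; orthogonality then supplies the unique comparison isomorphism between the two apexes, and functoriality of $F$ on $\C\cp\M^\op$ shows that this isomorphism carries one decoration to the other. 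Your final readings of $\Lan F$ on morphisms and of $\kappa$ from the colimit structure agree with the statement.
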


\begin{prop}[$\Kan$ on morphisms]\label{Kan.on.maps}
Let
\[
  (A,\alpha) \colon (\C, \, (\E,\M), \, F)\to (\C', \, (\E',\M'), \, F')
\]
be a morphism on $\Data$. Then, the map of cospan algebras
\[
\Kan(A,\alpha) \colon (\C,\, \Lan F)\to(\C', \, \Lan F')
\]
is the pair $(A,\beta)$. Here $\beta\colon \Lan F\Rightarrow \Lan F' A$ is the natural transformation whose components $\beta_X \colon \Lan F X\to \Lan F' AX$ are given by $$(X\xrightarrow{e} N, s)\mapsto (AX\xrightarrow{e'} \overline{AN}, F'({m'}^\op )\alpha_N(s)),$$ where we have the $(\E',\M')$-factorisation $Ae = m' \circ e'$.
\end{prop}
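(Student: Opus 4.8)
The plan is to avoid verifying naturality and well-definedness by hand, and instead to read the formula for $\beta$ directly off the universal property that defines it in Proposition~\ref{def.kan}, exploiting the fact that $\Lan F$ is a \emph{pointwise} left Kan extension. The first task is to pin down which natural transformation $\beta$ transposes. Write $J\colon\C\cp\M^\op\hookrightarrow\Cospan(\C)$ and $J'\colon\C'\cp{\M'}^\op\hookrightarrow\Cospan(\C')$ for the two inclusions, so that $\kappa\colon F\Rightarrow\Lan F\circ J$ and $\kappa'\colon F'\Rightarrow\Lan F'\circ J'$ are the Kan units. Since $A$ preserves the restricted-cospan subcategories, the square $\Cospan(A)\circ J = J'\circ A$ commutes, and so, setting $G\coloneqq\Lan F'\circ\Cospan(A)$, its restriction is $G\circ J=\Lan F'\circ J'\circ A$. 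Hence $\kappa' A\colon F'A\Rightarrow G\circ J$, and $\gamma\coloneqq(\kappa' A)\circ\alpha\colon F\Rightarrow G\circ J$. Unwinding Proposition~\ref{def.kan} through the adjunction $\Lan\dashv(-\circ J)$ of \eqref{eq.lanadj}, the transformation $\beta\colon\Lan F\Rightarrow G$ is exactly the transpose of $\gamma$. Getting this identification right---in particular the commuting square and the typing of $\gamma$---is the one place that requires care, and I expect it to be the main obstacle; once it is in hand the rest is forced.

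Next I would use the colimit description of $\Lan F$ from \cite[Thm.~X.3.1]{Mac}. For a pointwise left Kan extension into $\Set$, the transpose of $\gamma$ is given on coprojections by $\beta_X\big(\lambda_{(W,\phi)}(t)\big)=G(\phi)\big(\gamma_W(t)\big)$, where $\lambda$ denotes the colimit injections indexed by $(W,\phi)\in J\downarrow X$. This formula is manifestly natural in $X$ and independent of representatives, so neither naturality nor well-definedness requires a separate argument. Under the simplified colimit of Proposition~\ref{prop.kanexplicit}, the element $(X\xrightarrow{e} N, s)\in\Lan F X$ is the class of $(N, e^\op)$ carrying the value $s\in FN$, whence $\beta_X(X\xrightarrow{e} N, s)=G(e^\op)\big(\gamma_N(s)\big)$.

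Finally I would evaluate this using the explicit formulas of Proposition~\ref{prop.kanexplicit}. By the description of $\kappa'$ there, $\gamma_N(s)=\kappa'_{AN}(\alpha_N(s))=(AN\xrightarrow{=}AN,\ \alpha_N(s))$, while $G(e^\op)=\Lan F'\big((Ae)^\op\big)$ is $\Lan F'$ applied to the cospan $AN\xrightarrow{=}AN\xleftarrow{Ae}AX$. Feeding the former into the morphism-formula of Proposition~\ref{prop.kanexplicit}, the relevant pushout is taken along an identity leg and so degenerates: the coprojection is an identity, contributing $F'(\id)=\id$ to the decoration, and leaving only the $(\E',\M')$-factorisation $Ae=m'e'$. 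The output is precisely $AX\xrightarrow{e'}\overline{AN}$ with decoration $F'({m'}^\op)\,\alpha_N(s)$, which is the claimed formula. The sole genuine computation is recognising this pushout-along-an-identity degeneracy, and it is immediate.

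It is worth noting the alternative, elementary route (as in the appendix): take the displayed formula as the definition of $\beta_X$ and verify naturality against a cospan $f=(X\xrightarrow{i} M\xleftarrow{o} Y)$ by hand. There the difficulty migrates into a diagram chase reconciling the two orders of ``form the pushout, then factor'', which succeeds because $A$ preserves pushouts, $\M'$ is closed under composition and stable under pushout, $(\E',\M')$-factorisations are unique up to unique isomorphism, and the naturality of $\alpha$ matches up the decorations.
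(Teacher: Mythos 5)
Your proposal is correct and follows essentially the same route as the paper's own proof: identify $\beta$ as the transpose of $\gamma=(\kappa'A)\circ\alpha$ with respect to $G=\Lan F'\circ\Cospan(A)$, read off $\beta_X(X\xrightarrow{e}N,s)=G(e^\op)(\gamma_N(s))$ from the pointwise/colimit description underlying Proposition~\ref{prop.kanexplicit}, and evaluate via the degenerate pushout and the factorisation $Ae=m'e'$. The only difference is cosmetic---you invoke the general transpose formula for pointwise Kan extensions where the paper cites the formula established in its own proof of Proposition~\ref{prop.kanexplicit}, and you are somewhat more careful about the distinction between $A$ and $\Cospan(A)$.
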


\section{Deconstructing decorated corelations}\label{section.hyp.and.cospanalg}

In this section we prove our main result, that the decorated corelations construction factors as $\Kan$ then $\algtohyp$, and hence is functorial. 

\subsection{Decorating using cospan algebras}

When restricting the decorated cospan constructions to the category $\calg$, where the factorization systems considered are trivial, verifying that the correspondence is functorial becomes a simple check.

\begin{lemma} \label{lem.deccospanalgfunctorial}
  Write $\algtohyp$ for the restriction of the map
  $
  (-)\Corel\colon \Data \to \Hyp
  $
  to the subcategory $\calg \subseteq \Data$. The map
  \[
    \algtohyp\colon \calg \to \Hyp
  \]
  is functorial.
\end{lemma}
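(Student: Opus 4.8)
The plan is to show that $\algtohyp$ respects identities and composition by unwinding the description in Proposition~\ref{prop.deccorelfunctor} in the special case where the factorisation systems are trivial, i.e.\ $(\E,\M)=(\I,\C)$. The key simplification, which I would emphasise at the outset, is that for objects of $\calg$ the category $\C\cp\M^\op$ is exactly $\Cospan(\C)$, and every cospan is already an $(\I,\C)$-corelation. Consequently the factorisation step $A(X+Y)\xrightarrow{e'}\overline{AN}\xrightarrow{m'}AN$ that appears in the general construction becomes trivial: we may take $e'=A[i,o]$ (up to the canonical iso $A(X+Y)\cong AX+AY$) and $m'=\id_{AN}$, so that $\overline{AN}=AN$ and ${m'}^\op$ is an identity cospan. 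Thus on a decorated corelation $(X\xrightarrow{i}N\xleftarrow{o}Y,\,1\xrightarrow{s}FN)$ the functor $\algtohyp(A,\alpha)$ simply postcomposes the cospan with $A$ and sends the decoration $s$ to $\alpha_N(s)\in F'AN$, with no factorisation bookkeeping.

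With this simplification in hand I would carry out two checks. First, identities: the identity morphism on $(\C,F)$ in $\calg$ is $(\id_\C,\id_F)$, and feeding $A=\id_\C$, $\alpha=\id_F$ into the description above leaves both the underlying cospan and its decoration untouched, so $\algtohyp(\id)=\id_{\FCorel}$. Second, composition: given composable morphisms $(A,\alpha)\colon(\C,F)\to(\C',F')$ and $(B,\beta)\colon(\C',F')\to(\C'',F'')$, their composite in $\Data$ (hence in $\calg$) is $(BA,\,\beta A\circ\alpha)$, where the natural transformation component at an object $N$ is $\beta_{AN}\circ\alpha_N$. I would then compare $\algtohyp(B,\beta)\circ\algtohyp(A,\alpha)$ with $\algtohyp(BA,\beta A\circ\alpha)$ on a representative decorated corelation: on objects both give $BAX$; on the underlying cospan both give $BA$ applied legwise to $X\xrightarrow{i}N\xleftarrow{o}Y$; and on the decoration the first sends $s\mapsto\alpha_N(s)\mapsto\beta_{AN}(\alpha_N(s))$ while the second sends $s\mapsto(\beta A\circ\alpha)_N(s)=\beta_{AN}(\alpha_N(s))$. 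These agree, giving functoriality. Since Proposition~\ref{prop.deccorelfunctor} already guarantees each $\algtohyp(A,\alpha)$ is a well-defined hypergraph functor, no further structure needs to be verified.

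I do not expect a genuine obstacle here; the content of the lemma is precisely that the trivial factorisation system collapses the one subtle ingredient of the general construction. The main care point is bookkeeping: one must check that the canonical coherence isomorphisms $A(X+Y)\cong AX+AY$ (part of the data making $A$ finitely cocontinuous) interact correctly with the factorisations, so that the representatives chosen above really are the ones produced by Proposition~\ref{prop.deccorelfunctor}, and that composition of the monoidal natural transformations $\alpha,\beta$ in $\Data$ does restrict to componentwise composition as claimed. These are routine verifications flowing from the definition of composition in the Grothendieck construction $\int^{\FactSys^\op}D$, and I would present them briefly rather than in full detail.
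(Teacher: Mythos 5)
Your overall strategy is sound and, once repaired, yields a correct proof; but as written it rests on a false statement. You claim that ``every cospan is already an $(\I,\C)$-corelation.'' This is backwards: that is the description of $(\C,\I)$-corelations (Example~\ref{trivial_dec_corels2}, where $F\Corel_{(\C,\I)}=\FCospan$). For the factorisation system $(\I,\C)$ that defines $\calg$, a corelation $X\xrightarrow{i}N\xleftarrow{o}Y$ must have $[i,o]\in\E=\I$, i.e.\ invertible, so by Example~\ref{trivial_corels} there is exactly one $(\I,\C)$-corelation $X\to X+Y\leftarrow Y$ up to isomorphism. (What is true for $\calg$ is that $\C\cp\M^\op=\Cospan(\C)$, i.e.\ the decorating functor is defined on \emph{all} cospans; that does not make all cospans corelations.) This matters because your key simplification --- taking $e'=A[i,o]$ composed with $A(X+Y)\cong AX+AY$, and $m'=\id_{AN}$ --- is a legitimate $(\I,\C')$-factorisation only when $A[i,o]$ is an isomorphism, which holds precisely because the morphisms of $\FCorel_{(\I,\C)}$ are genuine corelations with $[i,o]$ invertible. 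Under your stated premise the step would fail: for an arbitrary cospan, $A[i,o]$ need not lie in $\E'=\I$. Replace the false sentence by the correct characterisation and everything downstream goes through, since factorisations are unique up to unique isomorphism and Proposition~\ref{prop.deccorelfunctor} is well defined on isomorphism classes, so you are free to compute with your chosen representatives.

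With that repair, your route differs from the paper's in a mild but genuine way. The paper normalises every morphism of $\FCorel$ to an element of $F(X+Y)$ (Example~\ref{trivial_dec_corels1}), always taking the apex to be $X+Y$; the price is that composing $(A,\alpha)\Corel$ and $(B,\beta)\Corel$ then requires renormalising the apex, and the crux of the paper's proof is a naturality square for $\beta$ at the canonical isomorphism $A(X+Y)\cong AX+AY$. You instead keep the apex $AN$ un-normalised, so that $(A,\alpha)\Corel$ acts legwise by $A$ and by $\alpha_N$ on decorations, and functoriality reduces to the componentwise formula $(\beta A\circ\alpha)_N=\beta_{AN}\circ\alpha_N$ for composition in the Grothendieck construction; no naturality square appears explicitly, its content being absorbed into the well-definedness of Proposition~\ref{prop.deccorelfunctor} on isomorphism classes. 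Both arguments are correct; yours shifts the burden onto the choice of representatives, the paper's onto naturality of $\beta$.
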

\begin{proof}
  It is immediate that $\algtohyp$ preserves identities. Suppose we have the pair of composable morphisms
  \[
    \btk[column sep=large]
    \Cospan(\C) \ar[rd,"F"] \ar[d, "A"',"\ \ \ \ \Downarrow\alpha" near end] \\
    \Cospan(\C')\ar[r,"F'" description] \ar[d, "B"',"\ \ \ \ \Downarrow\beta" near start] & \Set \\
    \Cospan(\C'') \ar[ru,"F''"']
    \etk
  \]
  in $\calg$. We must show that
  \[
    (B,\beta)\Corel \circ (A,\alpha)\Corel = (BA,\beta\circ\alpha)\Corel.
  \]
  On objects this is easy: both functors map each object $X$ of $F\Corel$ to $BAX$ in $F''\Corel$.

  For morphisms, recall from Example~\ref{trivial_dec_corels1} that since $F$ is in $\calg$, a morphism $X \to Y$ in $F\Corel$ is simply an element $s \in F(X+Y)$. By Proposition~\ref{prop.deccorelfunctor}, the images of this morphism $s$ under the functors $(B,\beta)\Corel \circ (A,\alpha)\Corel$ and $(BA,\beta\circ\alpha)\Corel$ are given respectively by the upper and lower composites in the diagram
  \[\btk[row sep=3ex,column sep=2.5ex,font=\small]
  1\ar[r,"s"]
  & F(X+Y)\ar[r,"\alpha_{X+Y}"]
  & F'A(X+Y)\ar[r,"F'(\sim)"]\ar[d,"\beta_{A(X+Y)}"']
  &[3ex] F'(AX+AY)\ar[d,"\beta_{AX+AY}"]
  \\
  & & F''BA(X+Y)\ar[r,"F''B(\sim)"]
  & F''B(AX+AY)\ar[r,"F''(\sim)"]
  &[3ex] F''(BAX+BAY)
  \etk\]
  where $\sim$ are the relevant maps given by the universal property of the coproduct, which are isomorphisms since $A$ and $B$ are cocontinuous. The square commutes by the naturality of $\beta$; this proves the two required functors are equal.
\end{proof}

\subsection{Decorated corelations is $\Kan$ then $\algtohyp$}\label{section.corel.functor}

First, we must understand how the functor $\Kan$ interacts with decorating corelations.  We show that $(-)\Corel$ factors through $\calg$.

\begin{thm}\label{thm.kan}
  The diagram
\[
\btk[row sep=1.5ex,column sep=6ex]
  \calg \ar[rd,"\algtohyp" near start]\\
  & \Hyp \\
  \Data \ar[uu,"\Kan"] \ar[ru,"(-)\Corel"' near start]
  \etk
\]
commutes in $\Cat$.
\end{thm}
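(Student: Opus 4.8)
The plan is to show that the two composites $\algtohyp\circ\Kan$ and $(-)\Corel$ agree as assignments on objects and on morphisms of $\Data$. Since $\Kan$ (Prop.~\ref{def.kan}) and $\algtohyp$ (Lemma~\ref{lem.deccospanalgfunctorial}) are already known to be functors, such an equality of assignments immediately upgrades to an equality of functors, which is also what yields Theorem~\ref{thm.deccorelfunctorial}. Because $\Kan(\C,(\E,\M),F)=(\C,\Lan F)$ carries the trivial factorisation system $(\I,\C)$, its image under $\algtohyp$ is the decorated corelation category built from $\Lan F\colon\Cospan(\C)\to\Set$, which I write $(\Lan F)\Corel$. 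So the task reduces to identifying $(\Lan F)\Corel$ with $\FCorel$, compatibly with the two morphism assignments.

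First I would set up the identification on hom-sets. By Example~\ref{trivial_dec_corels1}, a morphism $X\to Y$ in $(\Lan F)\Corel$ is just an element of $\Lan F(X+Y)$, and by Proposition~\ref{prop.kanexplicit} such an element is a pair $(X+Y\xrightarrow{e}N,\,s)$ with $e\in\E$ and $s\in FN$, taken up to isomorphism in $(X+Y)\mathord\downarrow\E$. Setting $i=e\iota_X$ and $o=e\iota_Y$, the condition $e\in\E$ is exactly the requirement that $X\xrightarrow{i}N\xleftarrow{o}Y$ be an $(\E,\M)$-corelation, so this datum is precisely an $F$-decorated corelation $X\to Y$, and the isomorphism classes match. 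Identities agree trivially.

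The main obstacle is matching composition. Unfolding composition in $(\Lan F)\Corel$ via Example~\ref{trivial_dec_corels1}, I would first derive the laxator of $\Lan F$ from the colimit formula, in the style of Appendix~\ref{appendix}: it sends a pair $(X+Y\xrightarrow{e_1}N_1,\sigma_1)$, $(Y+Z\xrightarrow{e_2}N_2,\sigma_2)$ to $(X+Y+Y+Z\xrightarrow{e_1+e_2}N_1+N_2,\,\varphi(\sigma_1,\sigma_2))$, where $\varphi$ is the laxator of $F$ and $e_1+e_2\in\E$. Applying $\Lan F(\comp)$ through Proposition~\ref{prop.kanexplicit} then forms the pushout of $e_1+e_2$ along the left leg $q=\id_X+[\id_Y,\id_Y]+\id_Z$ of $\comp$ and factors the resulting right leg. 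The key computation is that, since $q$ is epi and folds the two copies of $Y$, this pushout collapses to the pushout $N_1+_Y N_2$ of Definition~\ref{FCorel}; the right leg becomes the induced map $[j_{N_1}e_1\iota_X,\,j_{N_2}e_2\iota_Z]\colon X+Z\to N_1+_Y N_2$, whose $(\E,\M)$-factorisation is exactly the one used in $\FCorel$; and the decoration $F(m^\op)\,F([j_{N_1},j_{N_2}])\,\varphi(\sigma_1,\sigma_2)$ reproduces the composite decoration of Definition~\ref{FCorel}, since $[j_{N_1},j_{N_2}]$ is the canonical map $N_1+N_2\to N_1+_Y N_2$. In short, the crux is to verify that the pushout-then-factorise performed ``up front'' in $\FCorel$ is the same as the pushout-then-factorise hidden inside $\Lan F(\comp)$.

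Finally I would check the hypergraph structure and the morphism assignment. The monoidal and Frobenius structures of both categories are built from empty-decoration corelations as in Example~\ref{cospan_is_hypergraph}, and these correspond under the unit $\kappa$ of Proposition~\ref{prop.kanexplicit}, so they agree by a routine check. For a morphism $(A,\alpha)$ of $\Data$, I would compare $(A,\alpha)\Corel$ (Proposition~\ref{prop.deccorelfunctor}) with $\algtohyp\Kan(A,\alpha)=(A,\beta)\Corel$, where $\beta$ is given by Proposition~\ref{Kan.on.maps}. Specialising Proposition~\ref{prop.deccorelfunctor} to the trivial factorisation system on $\C'$, the corelation part is trivial and the factorisation of the identity is trivial, so $(A,\beta)\Corel$ acts on $(X+Y\xrightarrow{e}N,\sigma)$ by the component $\beta_{X+Y}$, which by Proposition~\ref{Kan.on.maps} equals $\big(A(X+Y)\xrightarrow{e'}\overline{AN},\,F'({m'}^\op)\alpha_N(\sigma)\big)$ for the $(\E',\M')$-factorisation $Ae=m'e'$. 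This is precisely the image under $(A,\alpha)\Corel$ of the corresponding $F$-decorated corelation $(X\xrightarrow{e\iota_X}N\xleftarrow{e\iota_Y}Y,\sigma)$. Hence the two functors coincide, and the triangle commutes in $\Cat$.
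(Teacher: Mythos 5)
Your proposal is correct in outline and its two key computations are sound, but it takes a genuinely more computational route than the paper for the object-level half. The paper never unfolds the laxator of $\Lan F$ or the map $\Lan F(\comp)$: in Lemma~\ref{lem.corelkanobjects} it applies Proposition~\ref{prop.deccorelfunctor} to the $\Data$-morphism $(\id_\C,\kappa)$, which hands it a hypergraph functor $I=(\id_\C,\kappa)\Corel\colon \FCorel_{(\E,\M)}\to(\Lan F)\Corel_{(\I,\C)}$ whose preservation of composition, identities, monoidal and Frobenius structure is already guaranteed by \cite[Prop.\ 6.1]{Fong18}; it then only checks that $I$ is the identity on objects and on hom-sets, a short unfolding of Propositions~\ref{prop.deccorelfunctor} and~\ref{prop.kanexplicit}, after which all structure matching is automatic. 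You instead verify the matching by hand: you derive the laxator of $\Lan F$ (your formula is right, though you should justify both that $e_1+e_2\in\E$ --- true because the left class of an orthogonal factorisation system is closed under coproducts --- and that your formula satisfies the universal property characterising the laxator), and you compare $\Lan F(\comp)\circ\varphi$ with Definition~\ref{FCorel}; your pushout-collapse claim is correct, although the reason is pasting of pushouts ($X+Y+Z$ is itself the pushout $(X+Y)+_Y(Y+Z)$), not that $q$ is epi. Your morphism-level comparison coincides with the paper's Lemma~\ref{lem.corelkanmorphisms}.

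The one genuinely understated step is your final ``routine check'' of the hypergraph structures. Under your hom-set identification, the Frobenius and identity decorations of $(\Lan F)\Corel_{(\I,\C)}$ are \emph{not} naive empty decorations, and they do not ``correspond under the unit $\kappa$'': they are obtained by applying $\Lan F$ to cospans such as $\emptyset\to X\xleftarrow{[\id,\id]}X+X$, whose right leg lies outside $\M$, while $\kappa$ only describes $\Lan F$ on $\C\cp\M^\op$. Computing instead with the naive empty decoration $\Lan F(!)\circ\varphi_0$ gives the wrong element: for $\C=\FinSet$, $(\E,\M)=(\text{surjections},\text{injections})$ and $F$ terminal, it yields the discrete partition of $X+X$ rather than the pairing partition, and the latter is the actual identity of $(\Lan F)\Corel_{(\I,\C)}$. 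The structures do match, but establishing this requires unfolding $\Lan F$ on these non-$\M$-legged cospans via Proposition~\ref{prop.kanexplicit}, a computation of the same kind as your composition step. (Identities, by contrast, come for free once hom-sets and composition agree; the Frobenius structure is extra data and does need the check.) This is exactly the work that the paper's trick of recycling the ready-made hypergraph functor $(\id_\C,\kappa)\Corel$ renders unnecessary.
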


To prove this, we show the following two lemmas, which separately study the action of these correspondences on objects and on maps.

\begin{lemma} \label{lem.corelkanobjects}
  Let $(\C,(\E,\M),F)$ be an object in $\Data$. Then
  \[
    \FCorel_{(\E,\M)} = (\Lan F)\Corel_{(\I,\C)}
  \]
  as hypergraph categories.
\end{lemma}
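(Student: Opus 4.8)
The plan is to prove the equality of hypergraph categories $\FCorel_{(\E,\M)} = (\Lan F)\Corel_{(\I,\C)}$ by comparing the two constructions componentwise: objects, hom-sets, composition, identities, and the monoidal and Frobenius structures. Both sides have the same objects (namely the objects of $\C$), so the content lies in the morphisms and their composition. The key observation is that the explicit formula for $\Lan F$ from Proposition~\ref{prop.kanexplicit} was engineered precisely to make these two descriptions match.

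First I would compare the hom-sets. A morphism $X \to Y$ in $\FCorel_{(\E,\M)}$ is an $(\E,\M)$-corelation $X \xrightarrow{i} N \xleftarrow{o} Y$ together with a decoration $s \in FN$, up to isomorphism. On the other side, a morphism in $(\Lan F)\Corel_{(\I,\C)}$ is, by Example~\ref{trivial_dec_corels1}, simply an element of $\Lan F(X+Y)$, since $(\I,\C)$-corelations are trivial. By Proposition~\ref{prop.kanexplicit}, an element of $\Lan F(X+Y)$ is an isomorphism class of pairs $(X+Y \xrightarrow{e} N, s \in FN)$ with $e \in \E$. The bijection I would exhibit sends such a pair to the corelation whose legs are $e\iota_X$ and $e\iota_Y$: precisely the condition that $[i,o] \in \E$ defining an $(\E,\M)$-corelation is the same as saying $e \in \E$, so the two sets of data coincide. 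I would check this correspondence is well-defined on isomorphism classes and is a bijection.

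Next I would verify that composition agrees. In $\FCorel_{(\E,\M)}$, composition is given by pushout followed by $(\E,\M)$-factorisation of the induced map $X+Z \to N +_Y M$, with the decoration transported via $Fm^\op \circ F[j_N,j_M] \circ \varphi$ (Definition~\ref{FCorel}). In $(\Lan F)\Corel_{(\I,\C)}$, composition is the $\comp$-cospan formula from Example~\ref{trivial_dec_corels1} applied to the functor $\Lan F$, where $\Lan F(\comp)$ unpacks via Proposition~\ref{prop.kanexplicit} into a pushout-then-factorise operation on the underlying cospans together with exactly the decoration transport $F(m^\op)F(j_N)$. I expect this to be the main obstacle: one must carefully trace how $\Lan F$ applied to the composition cospan $\comp$ reproduces both the corelation composition of Definition~\ref{corels} and the decoration-composition formula, checking that the lax structure map $\varphi$ on $\Lan F$ (induced by the universal property) agrees with $\varphi$ on $F$ in the relevant slot. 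The compatibility hinges on the uniqueness of $(\E,\M)$-factorisations and on the explicit description of the lax monoidal structure of $\Lan F$.

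Finally, since composition determines the symmetric monoidal and hypergraph structure, and both categories inherit their Frobenius structure from the underlying (co)relation category in the same canonical way (Example~\ref{cospan_is_hypergraph}), I would note that the structures on identities, braidings, and the Frobenius maps $\mu,\eta,\delta,\epsilon$ match under the bijection established above. This reduces to checking the generating morphisms correspond, which is immediate from the trivial-decoration embedding $\Corel(\C) \hookrightarrow \FCorel$. Assembling these observations yields the claimed equality of hypergraph categories.
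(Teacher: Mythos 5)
Your outline is essentially sound, but it takes a genuinely different, more labor-intensive route than the paper. The paper's proof is built on a single trick that makes almost all of your work unnecessary: the pair $(\id_\C,\kappa)$, where $\kappa\colon F \Rightarrow \Lan F \circ \iota$ is the canonical natural transformation of the left Kan extension, is a morphism in $\Data$ from $(\C,(\E,\M),F)$ to $(\C,(\I,\C),\Lan F)$, so Proposition~\ref{prop.deccorelfunctor} (i.e.\ the already-established result of \cite{Fong18}) hands us a \emph{hypergraph functor} $I=(\id_\C,\kappa)\Corel\colon \FCorel_{(\E,\M)} \to (\Lan F)\Corel_{(\I,\C)}$ for free. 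One then only has to unpack definitions to see that $I$ is identity-on-objects and identity-on-morphisms (the hom-set identification being exactly the one you give, via Proposition~\ref{prop.kanexplicit} and Example~\ref{trivial_dec_corels1}), and equality of hypergraph categories follows: preservation of composition, of the monoidal structure, and of the Frobenius maps is already packaged into the statement that $I$ is a hypergraph functor. Your plan instead verifies all of this by hand, and the step you rightly flag as ``the main obstacle'' is precisely the one the paper never has to face.

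Two caveats about carrying out your route. First, your composition check needs an explicit description of the lax monoidal structure map $\varphi^{\Lan F}\colon \Lan F(A)\times\Lan F(B)\to \Lan F(A+B)$; the paper never writes this down (it only cites \cite{FP} for its existence), so you would have to derive from the universal property that it sends $\bigl((A\xrightarrow{e_1}N,s),(B\xrightarrow{e_2}M,t)\bigr)$ to $\bigl(A+B\xrightarrow{e_1+e_2}N+M,\varphi^F(s,t)\bigr)$, including the fact that $\E$ is closed under coproducts. Once that formula is in hand, applying $\Lan F(\comp)$ via Proposition~\ref{prop.kanexplicit} does reproduce the pushout-then-factorise composition of Definition~\ref{FCorel}, so this step succeeds. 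Second, your closing claim that the Frobenius structures match is stated too glibly: composition does not ``determine'' the monoidal and hypergraph structure, and the two sides inherit their Frobenius maps from \emph{different} corelation categories --- on the $(\E,\M)$ side the generators are $(\E,\M)$-factorisations of the canonical Frobenius cospans with empty $F$-decoration, while on the $(\I,\C)$ side they are coproduct-inclusion corelations whose decoration is the Kan-extended empty decoration --- so a short computation, not an appeal to ``the same canonical way,'' is needed to see they correspond under your bijection. Both gaps are fillable, but they illustrate what the paper's approach buys: all such verifications are subsumed by one citation.
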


\begin{proof}
  Note that, by Proposition~\ref{prop.deccorelfunctor}, the pair $(\id_\C,\kappa)$, where $\kappa$ is the canonical natural transformation $F \to \Lan F \circ \iota$ of the left Kan extension, induces a hypergraph functor $I\coloneqq(\id_\C,\kappa)\Corel\colon \FCorel_{(\E,\M)} \to (\Lan F)\Corel_{(\I,\C)}$. Simply by unpacking definitions, we will show that $I$ is identity-on-objects and identity-on-morphisms.

  The object case is trivial: both $\FCorel_{(\E,\M)}$ and $(\Lan F)\Corel_{(\I,\C)}$ have the same objects as $\C$, and $\id_\C$ is the identity functor, so $I$ is identity-on-objects.

  Let's now consider morphisms. Fix objects $X,Y$ in $\C$. The homset $\FCorel_{(\E,\M)}(X,Y)$ is the set of isomorphism classes of $F$-decorated $(\E,\M)$-corelations $(X\xrightarrow{i} N\xleftarrow{o} Y, \ s\in FN)$.
On the other hand, Example~\ref{trivial_dec_corels1} shows that the homset $(\Lan F)\Corel_{(\I,\C)}(X,Y)=\Lan F(X+Y)$, and by Proposition~\ref{prop.kanexplicit}, this set is just the set of isomorphism classes of pairs $(X+Y\xrightarrow{e} N, \ s\in FN)$. By a very minor abuse of notation, we consider $(\E,\M)$-corelations $X\xrightarrow{i} N\xleftarrow{o} Y$ to be the same as maps $X+Y \xrightarrow{e} N$, and hence consider $\FCorel_{(\E,\M)}(X,Y)$ equal to $(\Lan F)\Corel_{(\I,\C)}(X,Y)$. It remains to check that $I$ is the identity on this set.

  Let $(X\xrightarrow{i} N\xleftarrow{o} Y, \ s\in FN)$ be an $F$-decorated $(\E,\M)$-corelation. By Proposition~\ref{prop.deccorelfunctor}, its image is $\big(\Lan F([i,o]^\op )\big)\circ \kappa (s)$. But by Proposition~\ref{prop.kanexplicit}, this is exactly the pair $(X+Y \xrightarrow{[i,o]} N,  s\in FN)$, which we consider to the same as the corelation we started with. Thus $I$ is identity-on-morphisms, and thus $\FCorel_{(\E,\M)} = (\Lan F)\Corel_{(\I,\C)}$ as hypergraph categories.
\end{proof}

\begin{lemma} \label{lem.corelkanmorphisms}
  Let $(A,\alpha)\colon (\C,(\E,\M),F)\to (\C',(\E',\M'),F')$ be a morphism in $\Data$. Then
  \[
    (A,\alpha)\Corel = (\Kan(A,\alpha))\Corel
  \]
  as hypergraph functors $F\Corel \to F'\Corel$.
\end{lemma}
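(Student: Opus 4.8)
The plan is to prove the equality of these two hypergraph functors by checking that they agree on objects and on morphisms. Since a hypergraph functor is a strong symmetric monoidal functor carrying the \emph{property} (not extra data) of preserving the Frobenius structures, agreement of the underlying functors is enough to conclude equality as hypergraph functors. Throughout I will use Lemma~\ref{lem.corelkanobjects} to identify both the source $\FCorel = (\Lan F)\Corel_{(\I,\C)}$ and the target $F'\Corel = (\Lan F')\Corel_{(\I,\C')}$, so that a morphism $X\to Y$ may be recorded interchangeably as an $F$-decorated $(\E,\M)$-corelation $(X\xrightarrow{i} N\xleftarrow{o} Y,\, s\in FN)$ or, via Proposition~\ref{prop.kanexplicit} and Example~\ref{trivial_dec_corels1}, as the element $(X+Y\xrightarrow{[i,o]} N,\, s)$ of $\Lan F(X+Y)$. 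On objects there is nothing to do: both $(A,\alpha)\Corel$ and $(\Kan(A,\alpha))\Corel = (A,\beta)\Corel$ are built from the same underlying functor $A$, so both send $X$ to $AX$.

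For morphisms I would first compute $(A,\alpha)\Corel$ on a decorated corelation $(X\xrightarrow{i} N\xleftarrow{o} Y,\, s)$ directly from Proposition~\ref{prop.deccorelfunctor}. This factors $A[i,o]\colon A(X+Y)\to AN$ in $(\E',\M')$ as $A(X+Y)\xrightarrow{e'}\overline{AN}\xrightarrow{m'} AN$, produces the corelation $AX\xrightarrow{e'\iota_{AX}}\overline{AN}\xleftarrow{e'\iota_{AY}} AY$, and decorates it by $t = \bigl(F'({m'}^\op)\circ\alpha_N\bigr)(s)$. Passing to the $\Lan F'$-description via Lemma~\ref{lem.corelkanobjects}, and using $[e'\iota_{AX}, e'\iota_{AY}] = e'$, this corelation is recorded as the element $(AX+AY\xrightarrow{e'}\overline{AN},\, t)$ of $\Lan F'(AX+AY)$.

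Next I would compute $(A,\beta)\Corel$. Here the target factorisation system is the trivial $(\I,\C')$, so the factorisation step of Proposition~\ref{prop.deccorelfunctor} is vacuous (its $\E'$-part is an isomorphism and the outer application is trivial), and the entire content sits inside the decoration map $\beta_{X+Y}$. By Proposition~\ref{Kan.on.maps}, the component $\beta_{X+Y}\colon \Lan F(X+Y)\to \Lan F'(A(X+Y))$ sends $(X+Y\xrightarrow{[i,o]} N,\, s)$ to exactly $(A(X+Y)\xrightarrow{e'}\overline{AN},\, F'({m'}^\op)\alpha_N(s))$, where $A[i,o] = m'e'$ is the \emph{very same} $(\E',\M')$-factorisation used above. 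Composing with the canonical isomorphism $A(X+Y)\cong AX+AY$ gives precisely the element $(AX+AY\xrightarrow{e'}\overline{AN},\, t)$ obtained in the previous step, so the two functors agree on morphisms.

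The comparison is thus immediate once both sides are unpacked, and the only real obstacle is bookkeeping: the single $(\E',\M')$-factorisation of $A[i,o]$ enters the two computations in different guises. For $(A,\alpha)\Corel$ it is performed explicitly inside Proposition~\ref{prop.deccorelfunctor}, whereas for $(A,\beta)\Corel$ it is concealed inside the definition of $\beta$ supplied by Proposition~\ref{Kan.on.maps}, the outer application of Proposition~\ref{prop.deccorelfunctor} being trivial because the factorisation system on $\C'$ has been replaced by $(\I,\C')$. Recognising these as the same factorisation, and carefully tracking the identification $A(X+Y)\cong AX+AY$, is all the argument requires.
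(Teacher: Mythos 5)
Your proof is correct and is essentially the paper's own argument: the paper's proof consists of the single remark that the lemma is ``precisely the statement of Prop.~\ref{Kan.on.maps}, when compared to the definition of the functor $(-)\Corel$,'' and your write-up carries out exactly that comparison, including the key observation that the one $(\E',\M')$-factorisation of $A[i,o]$ enters explicitly via Proposition~\ref{prop.deccorelfunctor} on one side and is concealed inside $\beta$ via Proposition~\ref{Kan.on.maps} on the other. There is no gap; you have simply made explicit the bookkeeping (including the trivial factorisation step in $(\I,\C')$ and the identification $A(X+Y)\cong AX+AY$) that the paper leaves to the reader.
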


\begin{proof}
 This is precisely the statement of Prop.\ \ref{Kan.on.maps}, when compared to the definition of the functor $(-)\Corel$ given in Thm.\ \ref{thm.deccorelfunctorial}.
\end{proof}
\begin{proof}[Proof of Theorems~\ref{thm.deccorelfunctorial} and \ref{thm.kan}]
  By Lemmas~\ref{lem.corelkanobjects} and \ref{lem.corelkanmorphisms}, we see respectively that on objects and on morphisms we have $(-)\Corel = \algtohyp \circ \Kan$. Since $\Kan$ is functorial by Definition~\ref{def.kan}, and $\algtohyp\colon \calg \to \Hyp$ is functorial by Lemma~\ref{lem.deccospanalgfunctorial}, this implies that $(-)\Corel\colon \Data \to \Hyp$ is functorial. This proves Theorem~\ref{thm.deccorelfunctorial}.

  Moreover, now that we know $(-)\Corel$ is a functor, Lemmas~\ref{lem.corelkanobjects} and \ref{lem.corelkanmorphisms} prove the diagram commutes in $\Cat$, so we also have Theorem~\ref{thm.kan}.
\end{proof}

\section{All hypergraph categories are decorated corelation categories}\label{recover_hyp}

We devote this section to showing that every hypergraph category can be expressed, up to equivalence, as a decorated corelation category built from some decorating data. This supports our claim that $\Data$ is a suitable setting in which to work when constructing black-box functors. 

Furthermore, if the hypergraph category is \emph{object-wise free}, we can recover it as a decorated corelation category up to isomorphism.

\subsection{From hypergraph categories to cospan algebras}
Given a hypergraph category $\H$, we can construct a cospan algebra in two steps. First, we make use of the fact that $\Cospan(\FinSet)$ is the free hypergraph category, and use this to construct a functor from $\Ob\H$-many copies of $\Cospan(\FinSet)$ to $\H$, that describes the Frobenius structure in $\H$. Second, we then use the hom functor on the monoidal unit $\H(I,-)\colon \H \to \Set$ to capture the homsets of $\H$. As we shall see, this is enough to construct a cospan algebra that captures all the structure of $\H$.

\begin{defn}
  Let $\Lambda$ be a set. The comma category $\FinSet\comma\Lambda\coloneqq \FinSet\mathord{\downarrow}\Lambda$ is the one whose objects are functions $x \colon n\to\Lambda$ for some finite set $n$, and whose morphisms $(n\xrightarrow{x}\Lambda)\to(m\xrightarrow{y}\Lambda)$ are functions $f \colon n\to m$ such that the diagram below commutes
\[\btk[column sep=small]
n\ar[rr,"f"]\ar[dr,"x"'] & & m\ar[dl,"y"]\\
& \Lambda &
\etk\]

Alternatively, objects of $\FinSet\comma\Lambda$ can be interpreted as finite lists of elements in $\Lambda$. We call the objects of $\FinSet\comma\Lambda$ \define{labelled finite sets}, and $\Lambda$ the set of labels.
\end{defn}

\begin{rmk}
  Using colimits in $\Set$, it is easy to show that $\FinSet\comma\Lambda$ is finitely cocomplete. In fact, it will be important that $\FinSet\comma\Lambda$ is the free finitely cocomplete category on $\Lambda$ \cite[Ch.\ 6]{Joh77}.
\end{rmk}

\begin{prop}\label{frob}
  Let $\H$ be a hypergraph category.  There exists an identity-on-objects hypergraph functor $$\Frob \colon \Cospan(\FinSet\comma{\Ob\H})\to\H$$ whose image is the subcategory of $\H$ generated by the Frobenius morphisms of the hypergraph structure.
\end{prop}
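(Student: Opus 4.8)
The plan is to obtain $\Frob$ from a universal property: I claim that $\Cospan(\FinSet\comma{\Ob\H})$ is the \emph{free hypergraph category on the set} $\Ob\H$, and that $\Frob$ is the hypergraph functor classified by the identity function $\Ob\H \to \Ob\H$. To justify this universal property I would combine two facts. First, the Remark above records that $\FinSet\comma\Lambda$ is the free finitely cocomplete category on the set $\Lambda$; thus finite-colimit-preserving functors $\FinSet\comma\Lambda \to \D$ correspond to functions $\Lambda \to \Ob\D$. Second, the construction $\C \mapsto \Cospan(\C)$ carries free finitely cocomplete categories to free hypergraph categories, so that a hypergraph functor out of $\Cospan(\FinSet\comma\Lambda)$ is determined by, and may be freely prescribed by, its values on the generating objects. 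Taking $\Lambda = \Ob\H$ and the identity function then produces $\Frob$.

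Concretely, this functor acts as follows. On objects, a labelled finite set $x\colon n \to \Ob\H$, viewed as a list $(x_1,\dots,x_n)$, is sent to the tensor product $x_1 \otimes \cdots \otimes x_n$ in $\H$ (the empty list to the unit $I$); in particular each singleton is fixed, which is the sense in which $\Frob$ is identity-on-objects. On morphisms I would build $\Frob$ from generators: every cospan in $\FinSet\comma{\Ob\H}$ decomposes, up to composition and monoidal product, into the canonical Frobenius maps $\mu,\eta,\delta,\epsilon$ of $\Cospan(\FinSet\comma{\Ob\H})$ (together with symmetries and identities), and $\Frob$ sends each such generator to the corresponding Frobenius map of $\H$. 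Because $\H$ is a hypergraph category, these assignments respect the Frobenius, symmetry and monoidal axioms, so $\Frob$ is a strong symmetric monoidal functor preserving Frobenius structure, i.e. a hypergraph functor; well-definedness (independence of the chosen decomposition) is exactly the freeness statement above.

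For the image I would argue both inclusions. The image contains every Frobenius morphism of $\H$, since each of $\mu_X,\eta_X,\delta_X,\epsilon_X$ is by construction $\Frob$ applied to the canonical Frobenius map on the singleton $(X)$ in $\Cospan(\FinSet\comma{\Ob\H})$; hence the subcategory generated by the Frobenius morphisms is contained in the image. Conversely, I would show that every morphism of $\Cospan(\FinSet\comma{\Ob\H})$ lies in the sub-symmetric-monoidal-category generated by these canonical Frobenius maps: a general cospan $X \xrightarrow{i} N \xleftarrow{o} Y$ factors as the embedded morphism $i$ followed by $o^\op$, and -- crucially -- since $\FinSet\comma{\Ob\H}$ is free finitely cocomplete on a set, each of its morphisms is itself a composite of coproduct injections, fold maps $[\id,\id]$, and the unique maps from the initial object, all of which are realised by the monoid operations $\mu,\eta$ (and dually by the comonoid operations $\delta,\epsilon$). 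Applying $\Frob$ then exhibits every morphism in the image as a composite of Frobenius morphisms of $\H$, giving the reverse inclusion.

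The step I expect to be the main obstacle is the morphism-level decomposition underpinning both the construction and the image claim: namely, that every function between labelled finite sets -- and therefore every cospan -- can be written using only the canonical Frobenius generators, and that $\Frob$ is independent of the chosen factorisation. This is precisely the content of the freeness of $\Cospan(\FinSet\comma{\Ob\H})$ as a hypergraph category, so once that universal property is taken as established (via \cite{Joh77} together with the fact that $\Cospan(\FinSet)$ is the free hypergraph category on one object), the remaining verifications reduce to routine diagram chases using the hypergraph axioms of $\H$.
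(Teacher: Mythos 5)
Your proposal is correct and follows essentially the same route as the paper: both obtain $\Frob$ from the universal property of $\Cospan(\FinSet\comma{\Ob\H})$ as the free hypergraph category on the set $\Ob\H$, defining it by sending the generating Frobenius cospans on each singleton $(X)$ to $\mu_X,\eta_X,\delta_X,\epsilon_X$. The only difference is one of bookkeeping: the paper cites this freeness directly (Theorem~3.14 of \cite{FS}), whereas you sketch deriving it from the free finite-cocompletion property of $\FinSet\comma\Lambda$ together with the claim that $\Cospan(-)$ sends free finitely cocomplete categories to free hypergraph categories---that transfer claim is precisely the cited theorem, so your argument is sound provided it is invoked as an established result rather than treated as self-evident.
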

\begin{proof}
  Theorem~3.14 of \cite{FS} states that given a set $\Lambda$, $\Cospan(\FinSet \comma \Lambda)$ is the free hypergraph category on the set $\Lambda$. The functor $\Frob$ is the map given by this universal property. More concretely, it is the functor generated by sending, for every $X \in \Ob\H$, the morphism $(X,X) \to X \leftarrow X$ in $\Cospan(\FinSet\comma{\Ob\H})$ to $\mu_X$, $\varnothing \to X \leftarrow X$ to $\eta_X$, $X \to X \leftarrow (X,X)$ to $\delta_X$, and $X \to X \leftarrow \varnothing$ to $\epsilon_X$.
\end{proof}

\begin{prop}
  We can construct a functor $$\hyptoalg \colon \Hyp\to\calg$$ sending a hypergraph category $\H$ to the pair $(\FinSet\comma{\Ob\H}, \, A_\H)$, where
  \[
    A_\H \colon \Cospan(\FinSet\comma{\Ob\H})\to\Set
  \]
  is defined by $A_\H(-)\coloneqq\H(I,\Frob(-))$.

  On morphisms, $\hyptoalg$ maps a hypergraph functor $F \colon \H\to\K$ to the pair $(A_F,\alpha)$, where $A_F \colon \FinSet\comma{\Ob\H}\to\FinSet\comma{\Ob\K}$ is the functor taking a list $(X_1,\dots,X_n)$ in $\H$ to the list $(FX_1,\dots FX_n)$ in $\K$ (and is identity on morphisms), and $$\alpha \colon \H(I,\Frob(-))\Rightarrow\K(I,\Frob A_F(-))$$ is the natural transformation given by \begin{align*}
\alpha_{(X_1,\dots,X_n)} \colon \H(I,X_1\otimes\dots\otimes X_n)& \to\K(I,FX_1\otimes\dots\otimes FX_n))\\
(I\xrightarrow{s} X_1\otimes\dots\otimes X_n) & \mapsto (I\xrightarrow{\varphi_I} FI\xrightarrow{\varphi^{-1}Fs} FX_1\otimes\dots\otimes FX_n),
\end{align*}
where $\varphi_I$ and $\varphi$ denote structure maps of the (strong) monoidal functor $F$.
\end{prop}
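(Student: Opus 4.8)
The plan is to verify, in turn, that $\hyptoalg$ is well defined on objects, well defined on morphisms, and strictly functorial.

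\emph{Objects.} Since $\FinSet\comma{\Ob\H}$ is finitely cocomplete, it remains only to check that $A_\H=\H(I,\Frob(-))$ is lax symmetric monoidal, so that $(\FinSet\comma{\Ob\H},A_\H)$ is an object of $\calg$. I would exhibit $A_\H$ as a composite of the hypergraph functor $\Frob\colon\Cospan(\FinSet\comma{\Ob\H})\to\H$ of Proposition~\ref{frob}, which is strong symmetric monoidal, with the representable $\H(I,-)\colon\H\to\Set$. The latter carries its standard lax symmetric monoidal structure, with laxator sending $(s,t)$ to the composite $I\cong I\otimes I\xrightarrow{s\otimes t}X\otimes Y$ and unit $\ast\mapsto\id_I$. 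A composite of a strong and a lax symmetric monoidal functor is lax symmetric monoidal, giving the claim.

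\emph{Morphisms.} The functor $A_F$ is finitely cocontinuous: it is the finite-colimit-preserving extension of $\Ob F$ supplied by the universal property of $\FinSet\comma{\Ob\H}$ as the free finitely cocomplete category on $\Ob\H$ (concretely, $A_F$ fixes underlying finite sets and only relabels, and finite colimits in $\FinSet\comma{\Ob\H}$ are computed on underlying sets). Each component $\alpha_L$ is well typed, since for $s\colon I\to\Frob_\H L$ the morphism $\varphi^{-1}\circ Fs\circ\varphi_I$ lands in $\K(I,\Frob_\K\Cospan(A_F)L)=A_\K(\Cospan(A_F)L)$, where $\varphi$ denotes the iterated strong structure map of $F$. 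The substantive points are naturality and monoidality of $\alpha$. Unwinding both legs of the naturality square shows it is equivalent to the identity
\[
  F\,\Frob_\H(f)\circ\varphi_L \;=\; \varphi_{L'}\circ\Frob_\K\Cospan(A_F)(f)
\]
for every $f\colon L\to L'$ in $\Cospan(\FinSet\comma{\Ob\H})$; that is, $\varphi$ should assemble into a monoidal natural isomorphism $\Frob_\K\Cospan(A_F)\Rightarrow F\,\Frob_\H$. Both of these are hypergraph functors out of the free hypergraph category $\Cospan(\FinSet\comma{\Ob\H})$ agreeing on labels, so by freeness it suffices to verify the identity on the generating Frobenius morphisms $\mu_X,\eta_X,\delta_X,\epsilon_X$, where it is exactly the requirement that $F$ preserve Frobenius structures, i.e.\ the defining property of a hypergraph functor. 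Monoidality of $\alpha$, namely compatibility with the laxators and units of $A_\H$ and $A_\K$, is then a coherence computation using that these laxators arise from tensoring morphisms out of $I$ together with the strong monoidal coherence of $F$.

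\emph{Functoriality.} For identities the structure maps of $\id_\H$ are identities, whence $A_{\id_\H}=\id$ and $\alpha=\id$. For a composite $G\circ F$ one checks $A_{GF}=A_GA_F$, since relabellings compose, and that the induced natural transformations compose as required by the Grothendieck construction defining $\calg$, using that the structure maps of $GF$ factor as composites of those of $G$ and $F$. I expect the naturality of $\alpha$ to be the main obstacle: it is the only step genuinely needing $F$ to be a hypergraph functor rather than merely strong monoidal, and it requires combining the universal property of $\Cospan(\FinSet\comma{\Ob\H})$ with the explicit Frobenius-preservation formula from the definition of hypergraph functor; the remaining checks are bookkeeping with the coherence isomorphisms $\varphi$.
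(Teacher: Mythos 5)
Your proof is correct and follows essentially the same route as the paper: lax monoidality of $A_\H$ via the composite of the strong monoidal $\Frob$ with the hom functor $\H(I,-)$, monoidal naturality of $\alpha$ deduced from the Frobenius-preservation property of the hypergraph functor $F$, and a routine functoriality check. The paper compresses all of this into a few lines, and your reduction of the naturality of $\alpha$ to the Frobenius generators via freeness of $\Cospan(\FinSet\comma{\Ob\H})$ is precisely the detail it leaves implicit in the phrase ``follows from the hypergraph functor structure of $F$''.
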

\begin{proof}
 First, we note that $\hyptoalg(\H)$ is indeed an object of $\calg$, since  the functor $A_\H$ is the composite of the monoidal functor $\Frob$ constructed in Proposition \ref{frob}, with the hom functor on the monoidal unit (which is lax monoidal), and is therefore lax monoidal.

  Moreover, it is easy to see that $A_F$ preserves finite colimits, and that $\alpha$ is a monoidal natural transformation (which follows from the hypergraph functor structure of $F$); this implies $(A_F,\alpha)$ is a morphism in $\calg$.

  These two facts show that $\hyptoalg$ is well defined. The functoriality of $\hyptoalg$ is a routine check.
\end{proof}

\subsection{$\Hyp$ is a coreflective subcategory of $\calg$}

We now prove that the functors $\algtohyp:\calg\to\Hyp$ and $\hyptoalg\colon\Hyp\to\calg$ are adjoint. We shall need the following technical lemma.

\begin{lemma}\label{mon}
Let $\C$ be a small finitely cocomplete category. There exists a functor $$\mon\colon \FinSet\comma{\Ob\C}\to\C$$ taking a list $(X_1,\dots,X_n)$ of objects of $\C$ to the object $X_1 + \dots + X_n$, and a map $f\colon (n\xrightarrow{x}\Lambda)\to(m\xrightarrow{y}\Lambda)$ to the morphism $X_1 + \dots + X_n\to Y_1 + \dots + Y_m$ in $\C$ induced by the maps $$X_i\xrightarrow{\id} X_i=Y_{f(i)}\xrightarrow{\iota_{Y_{f(i)}}} Y_1 + \dots + Y_m.$$
Furthermore, this functor is finitely cocontinuous.
\end{lemma}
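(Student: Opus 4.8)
The plan is to obtain $\mon$ directly from the universal property recorded in the remark above, namely that $\FinSet\comma{\Ob\C}$ is the free finitely cocomplete category on the set $\Ob\C$. The unit of this freeness is the functor sending each object $X\in\Ob\C$ to the singleton list $(X)$, and the universal property states that, for any finitely cocomplete category $\D$, restriction along this unit is an equivalence between the finite-colimit-preserving functors $\FinSet\comma{\Ob\C}\to\D$ and the functions $\Ob\C\to\Ob\D$ (functors out of the discrete category). Applying this with $\D=\C$ to the identity function $\id_{\Ob\C}$ produces a finite-colimit-preserving functor $\mon\colon\FinSet\comma{\Ob\C}\to\C$ with $\mon(X)=X$ on singletons. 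This single move delivers both the functoriality and the finite cocontinuity asserted in the statement for free; the remaining work is purely to check that the $\mon$ so obtained agrees with the explicit formulas given.

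For this verification, the key observation is that coproducts in $\FinSet\comma{\Ob\C}$ are computed by disjoint union of index sets, so that an arbitrary object $(X_1,\dots,X_n)$---that is, the function $x\colon n\to\Ob\C$---is the coproduct $\coprod_{i=1}^n (X_i)$ of its singletons. Since $\mon$ preserves coproducts and fixes singletons, $\mon(X_1,\dots,X_n)=X_1+\dots+X_n$, matching the claimed action on objects. For a morphism $f\colon(X_1,\dots,X_n)\to(Y_1,\dots,Y_m)$, i.e. a function $f\colon n\to m$ with $X_i=Y_{f(i)}$, the same decomposition exhibits $f$ as the cotuple whose $i$-th component is the coproduct injection $(X_i)=(Y_{f(i)})\hookrightarrow(Y_1,\dots,Y_m)$. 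Applying $\mon$ and again using preservation of coproducts sends this injection to $\iota_{Y_{f(i)}}\colon Y_{f(i)}\to Y_1+\dots+Y_m$, so $\mon(f)$ is exactly the map induced by the $X_i\xrightarrow{\id} Y_{f(i)}\xrightarrow{\iota_{Y_{f(i)}}} Y_1+\dots+Y_m$ described in the statement.

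The point requiring the most care is the bookkeeping in this last comparison: the universal functor a priori preserves coproducts only up to canonical isomorphism, so I must track how it can be taken to send the specific coproduct cocone $\big((X_i)\hookrightarrow(X_1,\dots,X_n)\big)_i$ to the chosen cocone $\big(X_i\xrightarrow{\iota_{X_i}} X_1+\dots+X_n\big)_i$ in $\C$. Once a choice of finite coproducts in $\C$ is fixed, the universal functor may be rectified so as to respect these canonical cocones strictly, after which the two descriptions of $\mon$ coincide on the nose. For completeness I would remark that functoriality can alternatively be checked by hand, since the coproduct universal property makes $\mon(f)$ well defined and both $\mon(\id)=\id$ and $\mon(g)\circ\mon(f)=\mon(g\circ f)$ follow by comparing components against the injections $\iota_{X_i}$; the freeness argument is nonetheless preferable, as it yields the finite cocontinuity---the only substantive claim---with no further work.
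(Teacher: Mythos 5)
Your proof is correct, but it inverts the paper's logic. The paper's own proof is a direct, elementary verification: it takes the explicit formulas as the definition of $\mon$, observes that functoriality is evident, and then checks by hand that $\mon$ preserves finite coproducts and coequalizers (which suffices, since these generate all finite colimits). You instead invoke the freeness of $\FinSet\comma{\Ob\C}$ as the finitely cocomplete category on the set $\Ob\C$ --- a fact the paper records only as a remark, citing Johnstone --- apply it to the identity function $\Ob\C\to\Ob\C$ to produce a cocontinuous functor, and then reconcile that abstract functor with the explicit formulas. This buys you finite cocontinuity with no computation, at the cost of (i) relying on the nontrivial freeness theorem, and (ii) the rectification step, which is genuinely needed: the universal property determines the functor only up to natural isomorphism, and the lemma asserts a functor with \emph{specified} values $X_1+\dots+X_n$ and specified components $\iota_{Y_{f(i)}}$. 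You correctly flag this and the transport-of-structure argument you sketch does close the gap (conjugating by the canonical comparison isomorphisms yields a functor naturally isomorphic to the abstract one, hence still cocontinuous, whose components against the injections are exactly the maps $X_i \to Y_{f(i)} \to Y_1+\dots+Y_m$ in the statement). So both arguments are sound; the paper's is shorter and self-contained, while yours explains \emph{why} the lemma is true --- $\mon$ is forced by the universal property --- and would generalize immediately to any other assignment of objects, not just the identity.
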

\begin{proof}
Functoriality of $\mon$ is evident, and it is straightforward to prove, for example, that $\mon$ preserves finite coproducts and coequalizers.
\end{proof}

\begin{thm}\label{A.and.phi}
The functor $\hyptoalg$ is left adjoint to $\algtohyp$.
\[\btk
\Hyp\rar[shift left=5pt, "\hyptoalg",""{name=a}] & \calg\lar[shift left=5pt,"\algtohyp",""{name=b}] \ar[phantom, from=a, to=b,"\scriptstyle\bot"]
\etk\]
\end{thm}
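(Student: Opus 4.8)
The plan is to establish the adjunction $\hyptoalg \dashv \algtohyp$ by constructing the unit and counit and verifying the triangle identities, but in practice the cleanest route is to exhibit a natural isomorphism of hom-sets
\[
\calg\big(\hyptoalg(\H),(\C,F)\big) \;\cong\; \Hyp\big(\H,\algtohyp(\C,F)\big).
\]
A morphism on the left is a pair $(A,\gamma)$ where $A\colon \FinSet\comma{\Ob\H}\to\C$ is finitely cocontinuous and $\gamma\colon A_\H\Rightarrow F\circ\Cospan(A)$ is monoidal. The key structural fact I would lean on is that $\FinSet\comma{\Ob\H}$ is the \emph{free} finitely cocomplete category on the set $\Ob\H$. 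Thus a finitely cocontinuous functor $A$ out of it is determined, up to the coherence forced by freeness, by a function $\Ob\H\to\Ob\C$, and via $\mon$ (Lemma~\ref{mon}) such functors factor through the objectwise data in a controlled way. This is what lets me trade functors out of $\FinSet\comma{\Ob\H}$ for genuinely hypergraph-theoretic data on $\H$.

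The heart of the argument is to reconstruct a hypergraph functor $\H \to \algtohyp(\C,F) = F\Corel_{(\I,\C)}$ from the pair $(A,\gamma)$. Recall from Example~\ref{trivial_dec_corels1} that a morphism $X\to Y$ in $F\Corel_{(\I,\C)}$ is simply an element of $F(AX + AY)$ (after transporting objects along the function underlying $A$). On the other hand $A_\H(-) = \H(I,\Frob(-))$, so a morphism $f\colon X\to Y$ in $\H$, regarded as an element of $\H(I, X\otimes Y)$ by bending the input leg around using the Frobenius comultiplication (the standard ``cup'' trick available in any hypergraph category), is precisely an element of $A_\H(X,Y)$. Applying the component $\gamma_{(X,Y)}$ then lands us in $F(AX+AY)$, giving the required morphism. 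I would define the candidate functor $\H\to F\Corel_{(\I,\C)}$ this way and, conversely, show every hypergraph functor out of $\H$ into $\algtohyp(\C,F)$ arises uniquely from such a $(A,\gamma)$ by reading off its action on objects (to get $A$) and on homsets (to get $\gamma$). The bending/unbending of legs via the Frobenius structure is exactly the bridge between $\Hom$-sets $\H(X,Y)$ and the ``states'' $\H(I, X\otimes Y)$ that $A_\H$ records, and the hypergraph-functor axioms guarantee this correspondence is compatible with composition, which in $F\Corel_{(\I,\C)}$ is the $\comp$-composition spelled out in Example~\ref{trivial_dec_corels1}.

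I would then check that this hom-set bijection is natural in both $\H$ and $(\C,F)$, which amounts to verifying it commutes with precomposition by hypergraph functors and postcomposition by morphisms of cospan algebras; both reduce to naturality of $\gamma$ together with functoriality of $A_F$ and $\alpha$ as already recorded in the construction of $\hyptoalg$ on morphisms. The main obstacle I anticipate is the faithful bookkeeping in the correspondence between $\H(X,Y)$ and $\H(I,X\otimes Y)$: one must confirm that composition in $\H$ translated through the cups and caps matches exactly the $\comp$-composition of decorated corelations, and that the Frobenius coherence (speciality, the Frobenius law) makes the bending canonical and invertible, so that the assignment is a genuine bijection rather than merely a surjection. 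Because $\Frob$ is an identity-on-objects hypergraph functor whose image generates the Frobenius morphisms of $\H$ (Proposition~\ref{frob}), every morphism of $\H$ is expressible via these structure maps composed with states, and this is the technical lever that forces the bijection; verifying it respects composition is the step where the hypergraph axioms must be used in full, and is where I would concentrate the real work.
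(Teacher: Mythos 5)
Your route is genuinely different from the paper's. The paper proves the adjunction by writing down an explicit unit and counit and checking naturality and the triangle identities: the unit $\eta_\H\colon\H\to\H(I,\Frob(-))\Corel$ sends $X$ to the singleton list $(X)$ and is the identity on morphisms, under the very identification $\H(X,Y)\cong\H(I,X\otimes Y)$ that you make explicit with Frobenius cups and caps; the counit is $(\mon,\id)$, with $\mon$ the list-to-coproduct functor of Lemma~\ref{mon}, using that the extension of $\mon$ to cospans equals $\Frob$. Your hom-set isomorphism is exactly the transpose of this data: applying your forward map to $\id_{\hyptoalg(\H)}$ recovers the paper's unit, and your ``read off objects and homsets'' inverse is $G\mapsto\epsilon_{(\C,F)}\circ\hyptoalg(G)$. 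The technical core is the same in both arguments, and you locate it correctly: compatibility of composition in $\H$, transported through the cup/cap bijection, with the $\comp$-composition of Example~\ref{trivial_dec_corels1}. The paper compresses precisely this into ``Observe that \dots morphisms $X\to Y$ are the elements in $\H(X,Y)$'' and ``It is easy to verify,'' so your version is more candid about where the Frobenius and special axioms are actually used. What the paper's route buys is that once unit and counit are written down, the remaining checks are mechanical; what yours buys is an explicit adjunction isomorphism, but you must then certify a genuine bijection, and that is where there is a gap.

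The gap is the word ``uniquely.'' Freeness of $\FinSet\comma{\Ob\H}$ determines a finitely cocontinuous functor out of it by its values on singletons only \emph{up to natural isomorphism}, not on the nose. Concretely, suppose $\theta\colon A\Rightarrow A'$ is a monoidal natural isomorphism between finitely cocontinuous functors $\FinSet\comma{\Ob\H}\to\C$ which is the identity on singleton lists but with $A\neq A'$ (such pairs exist whenever some value of $A$ on a longer list has a distinct isomorphic copy in $\C$, i.e.\ in essentially every non-skeletal example), and let $\gamma'$ be $\gamma$ transported along $\theta$. Then $(A,\gamma)\neq(A',\gamma')$ as morphisms of $\calg$, yet a short naturality computation shows they induce the same hypergraph functor $\H\to\FCorel$: the canonical isomorphisms $A((X))+A((Y))\to A((X,Y))$ and $A'((X))+A'((Y))\to A'((X,Y))$ are intertwined by $\theta$, so the resulting decorations agree. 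Hence the assignment from pairs to hypergraph functors is not injective, and the bijection you propose does not exist as stated. The repair is to rigidify the inverse by a canonical choice of cocontinuous extension (lists go to chosen coproducts of the objects read off), which is exactly what the paper's counit accomplishes via $\mon$. To be fair, the paper's own proof harbours the mirror-image wrinkle---strict naturality of $(\mon,\id)$ needs morphisms of $\calg$ to preserve the chosen coproducts, which the paper leaves unaddressed---but since your entire architecture rests on the bijection being exact, this point must be handled explicitly rather than delegated to freeness.
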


\begin{proof}
  Let $\H$ be a hypergraph category; applying $\algtohyp \hyptoalg$ to $\H$ yields $$\H\xmapsto{\hyptoalg} (\FinSet\comma{\Ob\H}, \, \H(I,\Frob(-)))\xmapsto{\algtohyp} \H(I,\Frob(-))\Corel.$$

Observe that objects in $\H(I,\Frob(-))\Corel$ are lists in $\Ob\H$, and morphisms $X\to Y$ are the elements in $\H(X,Y)$; this allows us to define the unit of the adjunction as the natural transformation whose components are the functors
\begin{align*}
\eta_\H \colon \H & \to\H(I,\Frob(-))\Corel\\
X & \mapsto (X)
\end{align*}
mapping an object $X$ to the list with only one entry valued in $X$, and given by the identity on maps.

Now, let $(\C,F)$ be an object in $\calg$. Applying $\hyptoalg\algtohyp$ to $(\C, F)$ gives $$(\C,F)\xmapsto{\algtohyp} \FCorel \xmapsto{\hyptoalg} (\FinSet\comma{\Ob\FCorel}, \, \FCorel(I,\Frob(-))).$$
Since $\Ob\FCorel=\Ob\C$, and morphisms $I\to X$ in $\FCorel$ are the elements in $FX$, we let the counit $(\mon,\id)$ be given by the commutative diagram:
\[
  \btk
  \Cospan(\FinSet\comma{\Ob\FCorel}) \ar[r,"{\FCorel(I,\Frob(-))}"] \ar[d, "\Frob"']
  &[10ex] \Set \\
  \C \ar[ur, "F"']
  \etk
\]
where we note that the extension of the functor $\mon$ defined in Lemma~\ref{mon} to the domain $\Cospan(\FinSet\comma{\Ob\FCorel})$ is equal to $\Frob$.
Note that $\eta$ and $\epsilon$ are well-defined, since $\eta_\H$ is clearly a hypergraph functor for every $\H$, and $\mon$ is a finitely cocontinuous functor. It is easy to verify the naturality of $\eta$ and $\epsilon$, and that the triangle conditions are satisfied.
\end{proof}

\begin{rmk}
In the proof of Thm.\ \ref{A.and.phi} we can see that, for every hypergraph category $\H$, the corresponding component of the unit $$\eta_\H \colon \H\to\H(I,\Frob(-))\Corel$$ is an equivalence of hypergraph categories.

This means every hypergraph category is equivalent to $\FCorel$ for some choice of lax monoidal functor $F$. This fact ensures that $\calg$, and by extension, $\Data$, are sufficiently general to handle all hypergraph categories; an important fact for applications.
\end{rmk}

\subsection{Recovering the hypergraph category up to isomorphism}

Previously, we saw the unit of the adjunction $\hyptoalg \dashv \algtohyp$ only recovered a hypergraph category $\H$ up to equivalence. In the special case that $\H$ is \define{objectwise free}---meaning, is strict and has monoid of objects freely generated by some set $\Lambda$---\cite{FS} shows that we can use similar techniques to recover $\H$ up to \emph{isomorphism}. In this section we describe the relationship between these two constructions.

Write $\HypOF$ for the full subcategory of $\Hyp$ with objects those hypergraph categories that are objectwise free, and write $\CospanAlg$ for the full subcategory of $\calg$ with objects those cospan algebras with domain the category $\FinSet_\Lambda$ of $\Lambda$-labelled finite sets for some set $\Lambda$.

\begin{prop} \label{prop.ofequivalence}
  There exists an equivalence of categories
  \[
    \btk
      \HypOF \ar[r,shift left =3pt, "\hyptoalg'"] & \CospanAlg \ar[l,shift left=3pt, "\algtohyp'"]
    \etk
  \]
  such that
  \begin{equation} \label{eq.algtohyprestricts}
    \btk
    \HypOF \ar[r,hookrightarrow] & \Hyp\\
    \CospanAlg \ar[r,hookrightarrow] \ar[u,"\algtohyp'", "\cong"'] & \calg \ar[u,"\algtohyp"]
    \etk
  \end{equation}
  commutes and there exists a natural transformation
\[
  \btk
  \HypOF \ar[r,hookrightarrow] \ar[d,"A'"', "\cong"] & \Hyp \ar[d,"\hyptoalg"]\\
  \CospanAlg \ar[r,hookrightarrow] \ar[ur,Rightarrow,shorten >=25pt,shorten <=25pt, "\alpha"] & \calg
  \etk
\]
\end{prop}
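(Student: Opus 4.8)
The plan is to realize $\algtohyp'$ as the corestriction of $\algtohyp$ and $\hyptoalg' = A'$ as the refinement of $\hyptoalg$ that decorates using the generating set of labels rather than all objects, and then to read off the equivalence (essentially the objectwise-free recovery of \cite{FS}) from the adjunction of Theorem~\ref{A.and.phi}, upgraded to an isomorphism by the freeness hypothesis.

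First I would define $\algtohyp'$ to be the corestriction of $\algtohyp$ to $\CospanAlg$. To see this is well defined I must check that $\algtohyp(\FinSet\comma\Lambda, F) = F\Corel$ is objectwise free: its objects are those of $\FinSet\comma\Lambda$, i.e.\ finite lists in $\Lambda$, which form the free monoid $\Lambda^*$ under the coproduct. Choosing concatenation as the strict model of the coproduct makes $F\Corel$ a strict hypergraph category whose monoid of objects is freely generated by $\Lambda$, so it lies in $\HypOF$. With $\algtohyp'$ defined as this corestriction, the square \eqref{eq.algtohyprestricts} commutes on the nose, since $\algtohyp'$ and $\algtohyp$ agree as assignments and the vertical map is the genuine inclusion.

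Next I would define $A' = \hyptoalg'$ on an objectwise free $\H$ with generating set $\Lambda$ by $A'\H = (\FinSet\comma\Lambda,\, \H(I,\Frob'(-)))$, where $\Frob'\colon \Cospan(\FinSet\comma\Lambda)\to\H$ is the identity-on-objects hypergraph functor from the free hypergraph category on $\Lambda$ (this is Proposition~\ref{frob} with $\Lambda$ in place of $\Ob\H$, via \cite{FS}). By construction $A'$ lands in $\CospanAlg$, and it differs from $\hyptoalg$ only in using $\Lambda$ rather than all of $\Ob\H$. The point of this refinement is that, since $\H$ is objectwise free, $\Ob\H$ is exactly $\Lambda^*$, which is precisely the object-monoid of $\algtohyp' A'\H = \H(I,\Frob'(-))\Corel$; the associated unit $\eta'_\H\colon \H \to \algtohyp'A'\H$ is therefore \emph{identity} on objects, hence bijective there, while it remains fully faithful by the same homset computation (morphisms $I\to\Frob'(-)$ in the corelation category are the decorations) that shows $\eta_\H$ is an equivalence in Theorem~\ref{A.and.phi}. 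Thus $\eta'_\H$ is an isomorphism of hypergraph categories, giving $\algtohyp'\circ A' \cong \id_{\HypOF}$. Dually, for $(\FinSet\comma\Lambda, F)\in\CospanAlg$ one has $A'\algtohyp'(\FinSet\comma\Lambda, F) = (\FinSet\comma\Lambda,\, F\Corel(I,\Frob'(-)))$, and since by Example~\ref{trivial_dec_corels1} the morphisms $I\to X$ in $F\Corel$ are exactly the elements of $FX$, the decoration $F\Corel(I,\Frob'(-))$ is canonically isomorphic to $F$; this yields $A'\circ\algtohyp' \cong \id_{\CospanAlg}$, establishing the equivalence.

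Finally I would construct $\alpha$. Its component at $\H$ is the morphism of cospan algebras $(\FinSet\comma\Lambda,\, \H(I,\Frob'(-))) \to (\FinSet\comma{\Ob\H},\, \H(I,\Frob(-)))$ whose functor part is the finitely cocontinuous functor $\FinSet\comma\Lambda \to \FinSet\comma{\Ob\H}$ induced by the inclusion of generators $\Lambda \hookrightarrow \Ob\H = \Lambda^*$ as singleton lists, and whose monoidal natural transformation part is the evident comparison, available because $\Frob'$ is the composite of $\Frob$ with this induced functor. Naturality of $\alpha$ and the verification that each component is a legitimate $\calg$-morphism both reduce to that same compatibility of $\Frob$ with $\Frob'$. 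I expect the main obstacle to be exactly the upgrade from equivalence to isomorphism in the previous paragraph: verifying that passing from $\Ob\H$ to the generating set $\Lambda$ makes $\eta'_\H$ \emph{literally} identity-on-objects rather than merely iso-on-objects. This is where the full force of the objectwise-free hypothesis (strictness together with freeness of the object-monoid on $\Lambda$) and the free-hypergraph-category theorem of \cite{FS} are both needed, and where the bookkeeping must be done carefully so that the two strict models of $\Lambda^*$ are identified on the nose.
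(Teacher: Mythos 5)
Your proposal is correct and follows essentially the same route as the paper: $\algtohyp'$ is defined as the restriction of $\algtohyp$ to $\CospanAlg$ (with image checked to lie in $\HypOF$), $\hyptoalg'$ decorates over the generating set $\Lambda$ via $\Frob\circ G$ for the inclusion $G\colon\FinSet\comma\Lambda\to\FinSet\comma{\Ob\H}$, and $\alpha$ is the $\calg$-morphism given by $G$ together with the identity monoidal natural transformation. The only difference is that where the paper cites Theorem~4.15 of \cite{FS} for the equivalence $\HypOF\simeq\CospanAlg$, you reconstruct that argument directly (identity-on-objects unit using freeness of the object monoid, full faithfulness via $\H(I,X\otimes Y)\cong\H(X,Y)$, and the canonical isomorphism $F\Corel(I,\Frob'(-))\cong F$), which matches what that reference establishes.
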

\begin{proof}
  The functors $\algtohyp'$ and $\hyptoalg'$ witnessing the above equivalence of categories are given in Theorem~4.15 in \cite{FS}, where in that case they have the notation $\H_{-}$ and $A_{-}$ respectively.

  The commutativity of the square \eqref{eq.algtohyprestricts} states that the functor $\algtohyp'$ is a restriction of $\algtohyp\colon \calg \to \Hyp$ to the subcategory $\CospanAlg$. Referring to \cite{FS}, it is easy to observe that $\algtohyp'$ is indeed the restriction of $\algtohyp$ to the subcategory $\CospanAlg$ of $\calg$; here, we may take this as the definition of $\algtohyp'$. It is then straightforward to see that the image of $\algtohyp'$ lies in $\HypOF$, since the objects and monoidal structure of $\FCorel$ are inherited from the domain of $F$, and for every object of $\CospanAlg$ the domain is some objectwise free category $\FinSet_\Lambda$.

  The natural transformation $\alpha$ is defined as follows. Let $\H$ be a strict hypergraph category with objects generated by $\Lambda$. Then $\Lambda$ is some subset of $\Ob\H$, and this inclusion $g$ induces an inclusion functor $G\colon \FinSet\comma\Lambda \to \FinSet\comma{\Ob\H}$. On objects, the functor $\hyptoalg'$ is defined to map a hypergraph category $\H$ to the cospan algebra $(\FinSet_\Lambda, \H(I,-) \circ \frob_\H \circ G)$, and the component of the natural transformation $\alpha$ at $\H$ is given by the finite colimit preserving functor $G$ with the identity monoidal natural transformation
  \[
    \btk[row sep=5pt, column sep=10ex]
    \Cospan(\FinSet\comma\Lambda) \ar[rd,"{\H(I,-)\circ \frob_\H\circ G}" near start] \ar[dd, "G"'] \\
    & \Set \\
    \Cospan(\FinSet\comma{\Ob\H}) \ar[ur,"{\H(I,-)\circ\frob_\H}"' near start]
    \etk
  \]
\end{proof}

In particular, this means that if $\H$ has monoid of objects generated by some set $\Lambda$, then we apply $\hyptoalg'$ to obtain a representation of the hypergraph category as decorating data, and then taking decorated corelations on that data returns a hypergraph category isomorphic to $\H$.

\begin{cor} \label{cor.ofequivalence}
  Let $\H$ be an objectwise-free strict hypergraph category. Then there is an identity-on-objects isomorphism $\H \cong \hyptoalg'(\H)\Corel$.
\end{cor}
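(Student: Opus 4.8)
The plan is to deduce the corollary almost formally from three facts already in hand: the identification of $(-)\Corel$ with $\algtohyp$ on the subcategory $\calg$, the commuting square~\eqref{eq.algtohyprestricts}, and the equivalence of Proposition~\ref{prop.ofequivalence}. First I would note that $\hyptoalg'(\H)$ is an object of $\CospanAlg\subseteq\calg$, so evaluating the decorated corelations functor on it is the same as applying $\algtohyp$: by Lemma~\ref{lem.deccospanalgfunctorial} the functor $\algtohyp$ \emph{is} the restriction of $(-)\Corel$ to $\calg$, whence
\[
  \hyptoalg'(\H)\Corel = \algtohyp\big(\hyptoalg'(\H)\big).
\]
The commuting square~\eqref{eq.algtohyprestricts} then identifies $\algtohyp$ on $\CospanAlg$ with $\algtohyp'$, giving
\[
  \hyptoalg'(\H)\Corel = \algtohyp'\big(\hyptoalg'(\H)\big),
\]
with both equalities holding on the nose rather than merely up to isomorphism.

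Next I would invoke Proposition~\ref{prop.ofequivalence}: since $\algtohyp'$ and $\hyptoalg'$ constitute an equivalence of categories, there is a natural isomorphism $\id_{\HypOF}\cong\algtohyp'\circ\hyptoalg'$, whose component at $\H$ supplies an isomorphism $\H\cong\algtohyp'(\hyptoalg'(\H))=\hyptoalg'(\H)\Corel$ in $\HypOF$. This already establishes the isomorphism of hypergraph categories, and it remains only to arrange that it can be taken identity-on-objects.

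For that last point I would compare objects directly. Because $\H$ is objectwise free with objects generated by $\Lambda$, its monoid of objects is the free monoid on $\Lambda$, i.e.\ the words in $\Lambda$; on the other side the objects of $\hyptoalg'(\H)\Corel$ are the objects of $\FinSet_\Lambda$, which under the list interpretation are again exactly the words in $\Lambda$, with monoidal product the concatenation inherited from the coproduct of $\FinSet_\Lambda$. Thus the evident identity assignment on objects is strict monoidal, and full faithfulness of the equivalence---morphisms $X\to Y$ in $\hyptoalg'(\H)\Corel$ being elements of $\H(I,\frob_\H(X+Y))\cong\H(X,Y)$ via the self-duality furnished by the Frobenius structure---forces it to underlie an isomorphism of hypergraph categories. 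I expect the main obstacle, and the only content beyond bookkeeping, to be confirming that the equivalence taken from \cite{FS} is genuinely strict on objects in this way: that replacing $\FinSet\comma{\Ob\H}$ by the smaller $\FinSet_\Lambda$ removes the singleton-list discrepancy visible in the general unit $\eta_\H$ of Theorem~\ref{A.and.phi}, so that no passage to isomorphism classes of objects is required and the object sets of $\H$ and $\hyptoalg'(\H)\Corel$ coincide on the nose.
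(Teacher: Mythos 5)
Your proposal is correct and follows essentially the same route as the paper: the corollary is obtained by combining the on-the-nose identifications $\hyptoalg'(\H)\Corel = \algtohyp(\hyptoalg'(\H)) = \algtohyp'(\hyptoalg'(\H))$ (via Lemma~\ref{lem.deccospanalgfunctorial} and the square~\eqref{eq.algtohyprestricts}) with the equivalence of Proposition~\ref{prop.ofequivalence}. The one point you flag as the remaining obstacle---that the equivalence imported from \cite{FS} is identity-on-objects once $\FinSet\comma{\Ob\H}$ is replaced by $\FinSet_\Lambda$, so no singleton-list discrepancy as in the general unit $\eta_\H$ arises---is exactly the point the paper also defers to Theorem~4.15 of \cite{FS} rather than proving directly.
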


\section{An application: reaction networks}\label{section.application}

To conclude, we give a novel construction of the black box functor for reaction networks.

\subsection{The black box functor for reaction networks}

As mentioned in the introduction, the language of category theory and, in particular, of decorating data, is especially useful to express the semantics of {\it open} dynamical systems. As the name suggests, these consist of  dynamical systems which allow for a notion of inflow and outflow. In their work \cite{BP17}, Baez and Pollard assemble open dynamical systems into a decorated cospan category defined as follows.
\begin{defn}
  Let $\Dynam \coloneqq \DCospan$, where $D\colon \FinSet\to\Set$ is the lax monoidal functor which maps a finite set $X$ to the set $$DX \coloneqq \{v\colon \mathbb{R}^X\to\mathbb{R}^X \mid v \mbox{ is an algebraic vector field}\}$$ of all algebraic vector fields on $\mathbb{R}^X$.

A function $f\colon X\to Y$ is mapped by $D$ to the function
\begin{align*}
  Df\colon DX&\longrightarrow DY\\
  v&\longmapsto f_* \circ v \circ f^*
\end{align*}
where $f^* \colon \R^Y\to\R^X$ is the pullback, given by $f^* (c)(x)=c(f(x))$, and $f_* \colon \R^X\to\R^Y$ is the pushforward, defined as $f_*(c)(y)=\sum_{x\in f^{-1}(y)} c(x)$.
\end{defn}

An \define{open dynamical system} is a morphism $(X\xrightarrow{i} N \xleftarrow{o} Y, \, \R^N\xrightarrow{v}\R^N)$ in the category $\Dynam$, where $i\colon X\to N$ and $o\colon Y\to N$ mark the inflow and outflow variables, and $v$ is an algebraic vector field. Given an inflow $I\in\R^X$ and an outflow $O\in\R^Y$, the total flow of the system is given by the equation $v(c)+i_* I(c)-o_* O (c)$.

Just like with their non-open counterparts, it is of interest to study the {\it steady states} of an open system; the main difference being that now the possible action of inflows and outflows must also be taken into account. Given an open dynamical system as above, a steady state with inflows $I$ and outflows $O$ is an element $c\in\R^N$ satisfying
\begin{equation}
\label{eq.steady_state}
v(c)+i_* I(c)-o_* O (c)=0.
\end{equation}
The set of solutions to such equations form what is known as a semialgebraic relation.

A \define{semialgebraic subset} of the vector space $\R^n$ is a finite union of subsets of $\R^n$ of the forms $\{v \mid P(v) =0\}$ and $\{v \mid P(v)> 0\}$, where $P\colon \R^n \to \R$ is any polynomial $P(x_1,\dots,x_n)$ in the coordinates of $\R^n$. The property of being semialgebraic is invariant under linear transformations, and hence we can define semialgebraic subsets for any finite dimensional vector space $V$: they are simply subsets that are semialgebraic under any isomorphism $V \cong \R^n$.

A \define{semialgebraic relation} $\R^m \to \R^n$ is then a binary relation $S \subseteq \R^m \oplus \R^n$ that forms a semialgebraic subset. Semialgebraic relations are closed under composition of relations. We thus have a category.

\begin{defn}
  The category $\SemiAlgRel$ is the category with finite dimensional vector spaces as objects and semialgebraic relations between them as morphisms.
\end{defn}
Baez and Pollard study reaction networks by creating a black-box functor which takes an open dynamical system to the set of possible input and output flows and concentrations that yield open steady states of the system.

\begin{thm} \label{thm.baezpollard}
  There exists a symmetric monoidal functor $$\blacksquare\colon \Dynam \to \SemiAlgRel$$ taking a finite set $X$ to the vector space $\mathbb{R}^X\oplus\mathbb{R}^X$, and a morphism $(X\xrightarrow{i} N\xleftarrow{o}Y,\, v)$ to the semialgebraic subset of steady states
\[
  \{(i^* c,I,o^*c,O)\mid v(c)+i_* I-o_* O=0\} \subseteq \R^X\oplus\R^X\oplus\R^Y\oplus\R^Y.
\]
\end{thm}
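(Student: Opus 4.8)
The plan is to obtain $\blacksquare$ as the image under $(-)\Corel$ of a single morphism of decorating data, post-composed with the equivalence furnished by the unit of $\hyptoalg\dashv\algtohyp$. First, by Example~\ref{trivial_dec_corels2} the domain $\Dynam=\DCospan$ is already a decorated corelation category: it is $D\Corel$ for the object $\mathbf{D}:=(\FinSet,(\FinSet,\I),D)\in\Data$, where $D\colon\FinSet\to\Set$ is the algebraic-vector-fields functor. For the codomain, I would first equip $\SemiAlgRel$ with a hypergraph structure: since every linear relation is semialgebraic, the identity-on-objects strict monoidal inclusion $\LinRel\hookrightarrow\SemiAlgRel$ lets us transport the Frobenius structures of Example~\ref{linrel} to $\SemiAlgRel$, the required closure under relational composition being the one recalled above. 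Applying $\hyptoalg$ and then $\iota$ produces an object $S:=\iota\,\hyptoalg(\SemiAlgRel)=(\FinSet\comma{\Ob\SemiAlgRel},(\I,\FinSet\comma{\Ob\SemiAlgRel}),\SemiAlgRel(I,\Frob(-)))\in\Data$, and by the remark following Theorem~\ref{A.and.phi} the unit $\eta_{\SemiAlgRel}\colon\SemiAlgRel\xrightarrow{\ \simeq\ }\algtohyp\hyptoalg(\SemiAlgRel)=S\Corel$ is an equivalence of hypergraph categories.

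The core of the argument is a morphism $(A,\alpha)\colon\mathbf{D}\to S$ in $\Data$. For the functor $A\colon\FinSet\to\FinSet\comma{\Ob\SemiAlgRel}$ I would send a finite set $X$ to the labelled set on $X\sqcup X$ that attaches a concentration copy of $\R$ to the first summand and a flow copy of $\R$ to the second, so that $\Frob(AX)\cong\R^X\oplus\R^X$; this $A$ is finitely cocontinuous, being a composite of the cocontinuous doubling $X\mapsto X\sqcup X$ with a relabelling (post-composition with a map into $\Ob\SemiAlgRel$), which acts as the identity on underlying finite sets. The component $\alpha_N\colon DN\to\SemiAlgRel(I,\R^N\oplus\R^N)$ sends an algebraic vector field $v$ to its steady-state relation $\alpha_N(v)=\{(c,\phi)\mid v(c)+\phi=0\}\subseteq\R^N\oplus\R^N$, the first summand recording concentrations and the second the net boundary flow; this is semialgebraic because $v$ is algebraic. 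With these choices the object part of $\eta_{\SemiAlgRel}^{-1}\circ(A,\alpha)\Corel$ sends $X$ to $\R^X\oplus\R^X$, as required.

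The main obstacle is showing that $\alpha$ is a monoidal natural transformation $D\Rightarrow\SemiAlgRel(I,\Frob(A-))$ and that the resulting functor reproduces the Baez--Pollard formula on morphisms. Naturality in $N$ requires, for each $f\colon N\to N'$ in $\FinSet$, that $\alpha_{N'}\circ Df=\SemiAlgRel(I,\Frob(Af))\circ\alpha_N$; unwinding $Df(v)=f_* v f^*$, this reduces to the computation that the Frobenius morphism $\Frob(Af)$ pulls concentrations back along $f^*$ and pushes flows forward along $f_*$, so that $\{(c,\phi)\mid v(c)+\phi=0\}$ is carried to $\{(c',\phi')\mid f_* v f^*(c')+\phi'=0\}=\alpha_{N'}(Df(v))$. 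The genuinely delicate point, and the technical heart of the proof, is this explicit identification of the Frobenius structure of $\SemiAlgRel$ under $\Frob$ with the pushforward/pullback pair $(f_*,f^*)$: one must arrange the hypergraph structure so that the \emph{copy} structure governs the concentration factors and the \emph{additive} structure governs the flow factors, and check that the ensuing signs yield the asymmetric $+i_* I - o_* O$ pattern. Monoidality of $\alpha$ is then the same bookkeeping applied to coproduct coprojections.

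Finally I would assemble the functor. By Theorem~\ref{thm.deccorelfunctorial} the pair $(A,\alpha)\Corel\colon\Dynam=D\Corel\to S\Corel$ is a hypergraph functor, and tracing Proposition~\ref{prop.deccorelfunctor} through the trivial factorisation $(\I,\FinSet\comma{\Ob\SemiAlgRel})$ shows that a decorated cospan $(X\xrightarrow{i}N\xleftarrow{o}Y,v)$ acquires the decoration $\SemiAlgRel(I,\Frob(m^{\op}))\,\alpha_N(v)$ on the apex $AX+AY$, where $m=A[i,o]$. Post-composing with $\eta_{\SemiAlgRel}^{-1}$ and reading $\SemiAlgRel(I,\Frob(m^{\op}))$ as relational composition then yields exactly $\{(i^* c,I,o^* c,O)\mid v(c)+i_* I-o_* O=0\}$. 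Thus $\blacksquare:=\eta_{\SemiAlgRel}^{-1}\circ(A,\alpha)\Corel$ is the sought functor; being a composite of hypergraph functors it is in particular strong symmetric monoidal, which both recovers and strengthens the statement.
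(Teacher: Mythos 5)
Your overall strategy---realise $\Dynam$ as $D\Corel$, reduce the codomain to decorating data, build a single morphism in $\Data$, and apply $(-)\Corel$---is exactly the paper's. But there is a genuine gap at the step where you equip the codomain with a hypergraph structure. In any hypergraph category the Frobenius structure on a monoidal product $V \oplus W$ is \emph{forced} to be the one induced by the chosen structures on $V$ and $W$. In $\SemiAlgRel$ the object $\R^X\oplus\R^X$ is the monoidal product of $\R^X$ with itself, so whatever structure you choose, both factors carry the \emph{same} one; the structure you actually need---and which your own naturality computation uses, with $f^*$ acting on the concentration factor and $f_*$ on the flow factor---is the \emph{mixed} one: copy on the first factor, additive on the second. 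That structure cannot exist on $\SemiAlgRel$ (nor does either of the two homogeneous structures transported from Example~\ref{linrel} work: with copy everywhere or addition everywhere, composition of decorations identifies, or sums, concentrations and flows alike, and $\alpha$ fails to be natural, i.e.\ $\Frob(f)\,\alpha_N(v) \neq \alpha_{N'}(Df(v))$). The same obstruction reappears in your functor $A\colon\FinSet\to\FinSet\comma{\Ob\SemiAlgRel}$: the ``concentration copy of $\R$'' and ``flow copy of $\R$'' are the same label $\R\in\Ob\SemiAlgRel$, and $\hyptoalg$ assigns one Frobenius structure per object, so $\Frob(AX)$ is again homogeneous, not mixed.

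This is precisely why the paper does not work with $\SemiAlgRel$ (or with $\hyptoalg$ applied to it), but instead introduces $\sarel$: a \emph{strict, objectwise-free} hypergraph category whose monoid of objects is freely generated by the single object $\R\oplus\R$ equipped with the mixed structure \eqref{eq.sarelfrobs}. There, $\R^X\oplus\R^X$ decomposes as $\lvert X\rvert$ copies of the generator $\R\oplus\R$---not as two copies of $\R^X$---so the mixed structure is coherent. The paper then applies $\hyptoalg'$ (Proposition~\ref{prop.ofequivalence}), not $\hyptoalg$, obtaining decorating data $S$ over plain $\FinSet$ and, by Corollary~\ref{cor.ofequivalence}, an identity-on-objects \emph{isomorphism} $S\Corel\cong\sarel$; this also sidesteps your reliance on a quasi-inverse $\eta_{\SemiAlgRel}^{-1}$ of the unit equivalence, which you would separately need to show is a hypergraph (or even monoidal) functor. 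Your sign convention $\alpha_N(v)=\{(c,\phi)\mid v(c)+\phi=0\}$ versus the paper's $\graph(v)=\{(c,v(c))\}$ is merely a bookkeeping difference, consistent with the sign remark in the paper; the real repair needed is to replace ``equip $\SemiAlgRel$'' by ``define the objectwise-free $\sarel$ and use $\hyptoalg'$,'' after which your final assembly goes through as in Corollary~\ref{black_box}.
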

Proving this theorem requires some work and, more importantly, some ingenuity.

In this final section, we show that Theorem~\ref{thm.baezpollard} is a corollary to a stronger, and more structured result (Cor.\ \ref{black_box}) that is easily derived from working within the framework of decorated corelations.  This stronger result constructs an objectwise free \emph{hypergraph} category $\sarel$ of semialgebraic relations and a hypergraph functor $\Dynam \to \sarel$. Instead of constructing the hypergraph functor directly, however, we find it is simpler to use the functor
  \[
    \HypOF\xrightarrow{\hyptoalg'}\calg\xhookrightarrow{\iota}\Data
  \]
  to find decorating data $S$ for $\sarel$, and then construct a morphism of decorating data from $D$ to $S$. Using the decorated corelations construction, we find that converting this morphism back to a hypergraph functor then easily implies the existence of the black box functor $\blacksquare$.

In short, recall the functors
\[
\btk[column sep=small]
\Hyp \ar[rr, shift left =5pt, "\hyptoalg"] &
  \scriptstyle\bot &
  \calg \ar[ll, shift left =5pt, "\algtohyp"] \ar[rr, shift left =5pt, "\iota"] &
  \scriptstyle\top &
  \Data.  \ar[ll,shift left =5pt, "\Kan"]
  \etk
\]
We find it easier to work on the right, in $\Data$, than on the left, in $\Hyp$.

\subsection{The hypergraph category of semialgebraic relations}

As mentioned above, we are interested in $\SemiAlgRel$ as the codomain for the black box functor. Note, however, that the image of $\blacksquare$ lies within the full subcategory of $\SemiAlgRel$ whose objects are of the form $\R^X\oplus\R^X$, for any finite set $X$.

\begin{defn}
  Let $\sarel$ be the full symmetric monoidal subcategory of $\SemiAlgRel$ whose objects are of the form $\R^X \oplus \R^X$ for some finite set $X$.
\end{defn}

We will see that this category is a more suitable semantic category for studying reaction networks.
The reason for this is that having restricted the objects to those in the image, $\sarel$ can now be equipped with a semantically meaningful hypergraph structure. Namely, recall that from the point of view of the reaction network, the two summands of the vector space $\R^X \oplus \R^X$ represent the spaces of concentrations and flows respectively. Concentrations and flows require different Frobenius structures to describe how they transform under coupling of reaction networks: we set concentrations equal, and add their flows. This is reminiscent of the case of potential and current in electric circuits and other passive linear networks in \cite{BF}, where the same structure is discussed in Sections~5.5 and 7.1.

More formally, we can obtain this hypergraph structure from the two different such structures present in $\LinRel$, as described in Example~\ref{linrel}. Observe that every linear subspace is semialgebraic, so there is an inclusion functor $\LinRel \hookrightarrow \SemiAlgRel$. Denote the two Frobenius structures on $\R\in\LinRel$ in Example~\ref{linrel} by $(\R,\mu_1,\eta_1,\delta_1,\epsilon_1)$ and $(\R,\mu_2,\eta_2,\delta_2,\epsilon_2)$. The object $\R\oplus\R$ can be given the Frobenius structure
\begin{equation} \label{eq.sarelfrobs}
  \big(\R \oplus \R, \:\: (\mu_1\oplus\mu_2)\circ \sigma_{2,3}, \:\: \eta_1\oplus\eta_2,\:\: \sigma_{2,3}\circ (\delta_1 \oplus\delta_2),\:\: \epsilon_1\oplus\epsilon_2\big)
\end{equation}
where $\sigma_{i,j}$ is the map that permutes the coordinates $i$ and $j$. In short, the object $\R\oplus\R$ is given the first Frobenius structure in the first coordinate, and the second in the second.

Then, since every object $\R^X \oplus \R^X$ in the category $\sarel$ is canonically isomorphic to the monoidal product of $\lvert X \rvert$ copies of $\R\oplus\R$, every object in $\sarel$ inherits a Frobenius structure from $\R \oplus \R$, and this equips $\sarel$ with the structure of a hypergraph category. Moreover, note that the objects of $\sarel$ can simply be considered finite sets, and hence the objects of $\FinSet$.

This yields the following proposition.
\begin{prop}
  With the hypergraph structure of \eqref{eq.sarelfrobs}, $\sarel$ is a strict hypergraph category with objects free on the one element set $1$.
\end{prop}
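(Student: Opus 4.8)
The plan is to reduce the three assertions---hypergraph structure, strictness, and freeness---to two standard facts: that the monoidal product of two special commutative Frobenius monoids is again one, and that a symmetric monoidal category all of whose objects are tensor powers of a single Frobenius object is canonically a hypergraph category, exactly as in the construction of Example~\ref{linrel}. Symmetric monoidality itself is free of charge, since $\sarel$ is by definition a full symmetric monoidal subcategory of $\SemiAlgRel$.

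First I would check that $\R\oplus\R$, equipped with the maps of \eqref{eq.sarelfrobs}, is a special commutative Frobenius monoid in $\sarel$. The inclusion $\LinRel\hookrightarrow\SemiAlgRel$ is strong symmetric monoidal (both categories use $\oplus$) and sends each linear relation to the same subset now regarded as semialgebraic; it therefore carries the two Frobenius objects $(\R,\mu_1,\eta_1,\delta_1,\epsilon_1)$ and $(\R,\mu_2,\eta_2,\delta_2,\epsilon_2)$ of Example~\ref{linrel} to special commutative Frobenius monoids on $\R$ inside $\SemiAlgRel$. In any symmetric monoidal category, if $A$ and $B$ are special commutative Frobenius monoids then so is $A\otimes B$, with structure $\big((\mu_A\otimes\mu_B)\circ(1\otimes\sigma\otimes1),\ \eta_A\otimes\eta_B,\ (1\otimes\sigma\otimes1)\circ(\delta_A\otimes\delta_B),\ \epsilon_A\otimes\epsilon_B\big)$---which is precisely the compatibility formula appearing in the definition of a hypergraph category. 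Taking $A=(\R,\mu_1,\eta_1,\delta_1,\epsilon_1)$ and $B=(\R,\mu_2,\eta_2,\delta_2,\epsilon_2)$ produces exactly \eqref{eq.sarelfrobs}, and its structure maps are legitimate morphisms of $\sarel$ since their domains and codomains are powers of $\R\oplus\R$. The verification that this product structure satisfies the Frobenius, (co)commutativity and special axioms is a routine string-diagram argument, which I would either cite or run through once; the only point requiring attention is that the interleaving braiding $\sigma_{2,3}$ is what keeps concentrations and flows from mixing.

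Next I would extend this to a hypergraph structure on all of $\sarel$. Since every object $\R^X\oplus\R^X$ is canonically isomorphic to the monoidal product of $\lvert X\rvert$ copies of $\R\oplus\R$, equipping it with the induced $\lvert X\rvert$-fold combined Frobenius structure both determines and is forced by the two compatibility conditions in the definition of a hypergraph category: the structure on a product is the interleaved one, and the structure on the unit $\R^{\varnothing}\oplus\R^{\varnothing}=0$ is trivial. These hold by construction, and this is exactly the mechanism by which Example~\ref{linrel} equips $\LinRel$ with a hypergraph structure from a single Frobenius object on $\R$; I would invoke that construction directly rather than reprove it.

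Finally, for strictness and freeness I would use the observation, already noted in the surrounding text, that the objects of $\sarel$ may be taken to be finite sets, and indeed---after restricting to a skeleton---natural numbers, with the object $n$ being $\R^n\oplus\R^n$ and the monoidal product given by addition $n\otimes m=n+m$. Under this reindexing the associator and unitors are identities, so $\sarel$ is strict, and its monoid of objects is $(\mathbb{N},+,0)$, the free monoid on the singleton $\{1\}$, with generator the object $\R\oplus\R$; this is precisely the assertion that $\sarel$ is objectwise free on $1$. The main obstacle is not in any single calculation---each Frobenius check is standard---but in the bookkeeping that makes strictness literal: one must confirm that the canonical reordering isomorphisms $\R^X\oplus\R^X\oplus\R^Y\oplus\R^Y\cong\R^{X+Y}\oplus\R^{X+Y}$ used to close $\sarel$ under $\otimes$ cohere so that $\otimes$ becomes genuinely strict, a coherence matter resolved by the skeletal choice rather than by new mathematics.
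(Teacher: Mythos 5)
Your proposal is correct and follows essentially the same route as the paper, whose proof is the discussion preceding the proposition: pull the two Frobenius structures on $\R$ back from $\LinRel$ along the strong monoidal inclusion into $\SemiAlgRel$, interleave them on $\R\oplus\R$ as in \eqref{eq.sarelfrobs}, extend to all objects via the canonical identification of $\R^X\oplus\R^X$ with the $\lvert X\rvert$-fold monoidal power of $\R\oplus\R$, and identify objects with finite sets to read off strictness and freeness. Your explicit passage to a skeleton with objects $(\mathbb{N},+,0)$ is a slightly more careful treatment of the strictness bookkeeping that the paper handles only implicitly, but it is the same argument, not a different one.
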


\subsection{A new viewpoint on the black box functor}
As $\sarel$ is objectwise free, we may apply the functor $\hyptoalg'$ of Proposition~\ref{prop.ofequivalence} to yield the cospan algebra
\[
  S\coloneqq \sarel(0,\frob(-))\colon \Cospan(\FinSet) \to \Set,
\]
where $0$ is the zero dimensional vector space. Note that $\frob$ sends a finite set $X$ to the object $\R^X \oplus \R^X$ of $\sarel$, and hence on objects this functor sends a finite set $X$ to the set of semialgebraic subsets of $\R^X \oplus \R^X$. On morphisms, loosely speaking, it sends a cospan of finite sets to its interpretation as Frobenius maps using the hypergraph structure of $\sarel$. Note that by Corollary~\ref{cor.ofequivalence}, we have an identity on objects isomorphism $S\Corel \cong \sarel$.

\begin{rmk}
  Note that while $S\Corel \cong \sarel$, there is a slight, but only cosmetic, difference in conventions for presenting morphisms. Indeed, in $S\Corel$ a morphism $X \to Y$ is specified by a subspace of $\R^{X+Y} \oplus \R^{X+Y}$, rather than a subspace of $\R^{X} \oplus \R^X \oplus \R^Y \oplus \R^Y$. 

  More subtly, due to the nature of the additive Frobenius structure on the second `flow' coordinate of each object in $\sarel$, composition in this second coordinate is given by summing to zero, rather than equality. That is, given decorations $U  \subseteq \R^{X+Y} \oplus \R^{X+Y}$ and $V  \subseteq \R^{Y+Z} \oplus \R^{Y+Z}$, their composite, as defined in Defn.~\ref{FCorel}, can be shown to be given by the subset
  \[
    \{(c_X,c_Z,f_X,f_Z) \mid \exists c_Y,f_Y. (c_X,c_Y,f_X,-f_Y) \in U, (c_Y,c_Z,f_Y,f_Z)\in V\}
  \]
  of $\R^{X+Z}\oplus \R^{X+Z}$. This difference with relational composition leads to the sign difference for $o_*O$ in equations \eqref{eq.steady_state} above and \eqref{eq.scorel} below: in the former $o_\ast O$ describes net flow \emph{out}, while in the latter $o_\ast O$ describes net flow \emph{in} (and hence has a negative sign).
\end{rmk}

We now have two objects of $\Data$: the data $(\FinSet, \, (\FinSet,\I), \, D)$ from which $\Dynam$ is constructed, and the data $(\FinSet, \, (\I,\FinSet), \, S)$ from which $\sarel$ is constructed. By abuse of notation we'll just refer to these objects as $D$ and $S$ respectively.

We now wish to define a morphism of decorating data $D \to S$. As we know, this is given by a pair $(A,\alpha)$ consisting of a finitely cocontinuous functor $A\colon \FinSet\to\FinSet$ and a natural transformation $\alpha\colon D \Rightarrow S \circ A$.

\begin{prop}
  For all finite sets $X$, define $\alpha_X\colon DX \to SX$ to be the function
  \[
    \alpha_X\colon \big\{v\colon \R^X\to\R^X \,\big|\, v \textrm{ algebraic} \big\}
    \longrightarrow
    \big\{R \subseteq \R^X\oplus\R^X \,\big|\, R \textrm{ semialgebraic}\big\}
  \]
sending an algebraic vector field $v$ to its graph $\graph(v) =\{(c,v(c))\}\subseteq\R^X\oplus\R^X$. This defines a monoidal natural transformation
\[\btk
\FinSet\dar[hookrightarrow, "\ \ \ \Downarrow\alpha" near start]\rar["D"] & \Set \\
\Cospan(\FinSet)\ar[ru,"S"'] &
\etk\]
\end{prop}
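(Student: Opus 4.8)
The plan is to verify the two defining conditions of a monoidal natural transformation: naturality of $\alpha$ over the common domain, and compatibility with the lax monoidal structures of $D$ and $S$.

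First I would pin down the domain. Since the factorisation system attached to the source datum $D$ is $(\FinSet,\I)$, the category $\FinSet\cp\I^\op$ over which $\alpha$ must be natural is, up to isomorphism of cospans, just $\FinSet$; its morphisms are functions $f\colon X\to Y$, embedded as the cospans $X\xrightarrow{f} Y\xleftarrow{\id} Y$. So it suffices to check $Sf\circ\alpha_X=\alpha_Y\circ Df$ for each such $f$, and I would do this by computing both sides on a vector field $v\in DX$. The right-hand side is immediate: $\alpha_Y(Df(v))=\graph(f_*\circ v\circ f^*)$. For the left-hand side, observe that $Sf=\sarel(0,\frob(f))$ is postcomposition in $\sarel$ with the semialgebraic relation $\frob(f)\colon\R^X\oplus\R^X\to\R^Y\oplus\R^Y$, i.e.\ ordinary relational composition.

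The hard part, and the step I expect to be the main obstacle, is identifying this relation $\frob(f)$. Unwinding the hypergraph structure \eqref{eq.sarelfrobs}, the function-cospan $f$ is interpreted coordinatewise: on the first (concentration) summand, carrying the copy/compare structure $(\mu_1,\delta_1)$ of Example~\ref{linrel}, it becomes the pullback relation $\{(c_X,c_Y)\mid c_X=f^*c_Y\}$; on the second (flow) summand, carrying the additive structure $(\mu_2,\delta_2)$, it becomes the pushforward relation $\{(I_X,I_Y)\mid I_Y=f_*I_X\}$. I would establish these two identifications by evaluating $\frob$ on the generating Frobenius morphisms and using that $\frob$ is a hypergraph functor, the point being that $(\mu_1,\delta_1)$ precisely encode $c_X=f^*c_Y$ while $(\mu_2,\delta_2)$ encode $I_Y=f_*I_X$. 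Granting this, composing $\graph(v)=\{(c,v(c))\}$ with $\frob(f)$ relationally forces $c_X=f^*c_Y$ and $I_Y=f_*v(c_X)=f_*vf^*(c_Y)$, so the composite equals $\{(c_Y,f_*vf^*(c_Y))\}=\graph(f_*vf^*)$, matching the right-hand side. I would remark that no sign intervenes here, in contrast to composition \emph{within} $S\Corel$ of the previous remark, because $\frob(f)$ performs no matching at an internal boundary.

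Finally, monoidality should be routine. For the multiplication constraint I would take $v\in DX$, $w\in DY$ and compare $\alpha_{X+Y}(\varphi^D_{X,Y}(v,w))=\graph(v\oplus w)$ with $\varphi^S_{X,Y}(\alpha_X v,\alpha_Y w)=\graph(v)\oplus\graph(w)$; under the canonical reshuffling $\R^{X+Y}\oplus\R^{X+Y}\cong(\R^X\oplus\R^X)\oplus(\R^Y\oplus\R^Y)$ these agree, since $(v\oplus w)(c_X,c_Y)=(v(c_X),w(c_Y))$. For the unit constraint, $\varphi^D_I$ names the unique vector field on $\R^\varnothing=0$, whose graph is the full subset of $0\oplus0$, which is exactly the element named by $\varphi^S_I$. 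Together these show $\alpha$ is a monoidal natural transformation.
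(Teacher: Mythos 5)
Your proof is correct and follows essentially the same route as the paper's: reduce naturality to functions $f\colon X\to Y$ viewed as cospans $X\xrightarrow{f}Y\xleftarrow{\id}Y$, identify $\frob(f)$ as the pullback-on-concentrations, pushforward-on-flows relation, and compute the relational composite $\frob(f)\graph(v)=\graph(f_*vf^*)$. The only differences are cosmetic: the paper cites \cite{BF} for the identification of $\frob(f)$ where you derive it from the Frobenius generators, and it leaves the monoidality and well-definedness checks implicit where you spell them out.
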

\begin{proof}
Since $v$ is algebraic, its graph $\graph(v)\coloneqq \{(c,v(c))\}\subseteq\R^X\oplus\R^X$ forms a semialgebraic subset, and thus $\alpha$ is well defined. Showing that $\alpha$ is natural in $\FinSet$ amounts to proving that, for any function $f\colon X\to Y$ and any algebraic vector field $v \in DX$ we have
\[
\frob(f)\graph(v)=Sf \circ \alpha_X(v)= \alpha_Y\circ Df (v)= \graph(f_* v f^*)
\]
in the set $SY$ of semialgebraic subsets of $\R^Y \oplus \R^Y$, where we abuse notation to also write $f$ for the cospan $X \xrightarrow{f} Y \xleftarrow{\id_Y} Y$.

To prove this, we must more closely examine the Frobenius structure in $\sarel$. In particular, we must understand the semialgebraic (and in fact linear) relation $\frob(f)\colon \R^X \oplus \R^X \to \R^Y \oplus \R^Y$. It is not difficult to show that this is given by the notion of pushforward and pullback, so that
\[
  \frob(f) = \{(f^\ast b,a,b,f_\ast a)\} \subseteq \R^X \oplus \R^X \oplus \R^Y \oplus \R^Y.
\]
See for example \cite[\textsection 5.5, \textsection 7.1]{BF}. This immediately implies naturality:
\[
  \frob(f)\graph(v)=\{(d,f_\ast v(c)) \mid f^\ast d=c\} = \{(d,f_\ast v f^\ast(d))\}= \graph(f_* v f^*). \qedhere
\]
\end{proof}

Applying the functor $(-)\Corel$ to the morphism $(\id,\alpha)\colon D \to S$ gives the following corollary.

\begin{cor}\label{black_box}
  There exists a hypergraph functor
  \[
    \blacksquare\colon \Dynam \to S\Corel \cong \sarel.
  \]
  On objects, this functor maps a finite set $X$ to the vector space $\R^X \oplus \R^X$. On morphisms, it maps a $D$-decorated cospan $(X\xrightarrow{i} N \xleftarrow{o} Y, \, v)$ to the decoration that results from the composition
  \begin{equation} \label{eq.scorel}
    v\xmapsto{\alpha} \{(c, v(c))\}\xmapsto{S[i,o]^\op} \{(i^* c, o^* c, I,O ) \mid v(c)=i_* I + o_* O \}.
\end{equation}
\end{cor}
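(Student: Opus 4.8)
The plan is to realise $\blacksquare$ as the image of the $\Data$-morphism $(\id,\alpha)\colon D\to S$ under the decorated corelations functor $(-)\Corel$, and then to unwind the general formula of Proposition~\ref{prop.deccorelfunctor} in this concrete situation. First I would record that $(\id,\alpha)$ is genuinely a morphism in $\Data$: the underlying functor $\id_\FinSet$ is finitely cocontinuous and carries the $\M$-part $\I$ of $D$ into the $\M'$-part $\FinSet$ of $S$, while $\alpha$ is the monoidal natural transformation constructed in the preceding proposition, witnessing $D\Rightarrow S$ along the inclusion $\FinSet\hookrightarrow\Cospan(\FinSet)$. By Theorem~\ref{thm.deccorelfunctorial} the map $(-)\Corel$ is a functor $\Data\to\Hyp$, so applying it yields a hypergraph functor $(\id,\alpha)\Corel\colon D\Corel\to S\Corel$.

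Next I would identify the two endpoints. Since $D$ carries the trivial factorisation system $(\FinSet,\I)$, Example~\ref{trivial_dec_corels2} gives $D\Corel = \DCospan = \Dynam$. On the other side, $S=\hyptoalg'(\sarel)$, so Corollary~\ref{cor.ofequivalence} supplies an identity-on-objects isomorphism $S\Corel\cong\sarel$; composing with it produces the desired functor $\blacksquare\colon\Dynam\to\sarel$. On objects this functor sends a finite set $X$ to $X$ in $S\Corel$, which is the object $\R^X\oplus\R^X$ of $\sarel$ under the above identification.

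It then remains to read off the action on morphisms from Proposition~\ref{prop.deccorelfunctor}. Because the codomain factorisation system is $(\I,\FinSet)$, the required $(\E',\M')$-factorisation of $[i,o]\colon X+Y\to N$ is trivial: the $\E'$-part $e'$ is the identity on $X+Y$ and the $\M'$-part is $m'=[i,o]$. Hence the underlying corelation of the image is simply $X\xrightarrow{\iota_X} X+Y\xleftarrow{\iota_Y} Y$, and its decoration is
\[
  \big(S([i,o]^\op)\circ\alpha_N\big)(v) = S([i,o]^\op)(\graph(v)),
\]
an element of $S(X+Y)$, that is, a semialgebraic subset of $\R^{X+Y}\oplus\R^{X+Y}$.

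The single genuinely computational step---and the main thing to verify---is evaluating the Frobenius map $S([i,o]^\op)$ explicitly. This is entirely parallel to the computation of $\frob(f)$ in the preceding proposition: unwinding the interpretation of the cospan $[i,o]^\op$ as Frobenius maps in $\sarel$, and applying it to $\graph(v)=\{(c,v(c))\}$, should produce exactly the steady-state relation $\{(i^*c,o^*c,I,O)\mid v(c)=i_*I+o_*O\}$ of equation~\eqref{eq.scorel}. Here one must be careful with the additive Frobenius structure on the flow coordinate and the resulting sign convention already flagged in the remark above, which is precisely what replaces equality of flows by the balance equation $v(c)=i_*I+o_*O$.
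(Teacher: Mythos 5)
Your proposal is correct and takes essentially the same route as the paper: the paper also obtains $\blacksquare$ by applying $(-)\Corel$ to the $\Data$-morphism $(\id,\alpha)\colon D\to S$, identifying $D\Corel=\Dynam$ and $S\Corel\cong\sarel$ exactly as you do, and reading off the action on morphisms from Proposition~\ref{prop.deccorelfunctor} with the trivial $(\I,\FinSet)$-factorisation $e'=\id$, $m'=[i,o]$. The only step you leave as a verification is the one the paper makes explicit, namely the Frobenius computation
\[
  S[i,o]=\frob[i,o]=\{(i^*c,o^*c,I,O,c,i_*I+o_*O)\}\subseteq\R^{X+Y}\oplus\R^{X+Y}\oplus\R^N\oplus\R^N,
\]
from which \eqref{eq.scorel} follows immediately.
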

\begin{proof}
  Recall that the action of the functor $(-)\Corel$ on morphisms is given in Prop.~\ref{prop.deccorelfunctor}. In particular, note that 
  \[
    S[i,o] = \frob[i,o] = \{(i^*c,o^*c,I,O,c,i_*I+o_*O)\} \subseteq \R^{X+Y}\oplus \R^{X+Y} \oplus \R^N \oplus \R^N,
  \]
  yielding the map \eqref{eq.scorel}.
\end{proof}

This functor is, up to a restriction of the codomain to $\sarel \subseteq \SemiAlgRel$, equal to black box functor $\blacksquare$ constructed in \cite{BP17}.

\newpage

\appendix
\section{The explicit computation of $\Kan$} \label{appendix}

\begin{proof}[Proof of Proposition \ref{prop.kanexplicit}]
Verifying that $\kappa$ is a natural transformation is virtually immediate; instead, we focus on proving that the universal property of the left Kan extension is satisfied.

For this, suppose there exists a functor $G \colon \Cospan(\C)\to\Set$ and a natural transformation $\gamma$ as on the right below; we wish to show there exists a unique $\beta \colon \Lan F\Rightarrow G$ such that we have the equation
\[\btk[row sep=large]
\C\cp \M^\op \dar[hookrightarrow, "\ \ \ \Downarrow\kappa" near start ]\rar["F"] & \Set \\
\Cospan(\C)\ar[ru,"\Lan F" description]\ar[ru,"G"',"\Downarrow\beta" pos=1/3, bend right=40pt]
\etk \ \ \ = \ \ \
\btk[row sep=large]
\C\cp \M^\op \rar["F"]\dar[hookrightarrow,"\ \ \ \Downarrow\gamma" near start] & \Set \\
\Cospan(\C)\ar[ru,"G"']
\etk\]
Given an object $X$ in $\C$, let $\beta_X \colon \Lan FX\to GX$ be the function $$(X\xrightarrow{e} N, s\in FN)\mapsto G(e^\op )\gamma_N(s).$$
Then, for every $X$, the composition $\beta_X\kappa_X$ is the correspondence $$s\xmapsto{\kappa_X} \btk[column sep=small] (X\rar[equal] & X, s)\etk \xmapsto{\beta_X} G(\id^\op )\gamma_X(s)=\gamma_X(s)$$ and so the above equality is satisfied.

To show that $\beta$ is natural, let $f=(X\xrightarrow{i} M \xleftarrow{o} Y)$ be a map in $\Cospan(\C)$, and $(X\xrightarrow{e} N, s)$ an element in $\Lan FX$. Denoting the composition by $fe^\op =(N\xrightarrow{j_N} N+_X M\xleftarrow{j_M o} Y)$ and the factorisation of the right leg of this cospan by $Y\xrightarrow{e'} \overline{N+_X M}\xrightarrow{m} N+_X M$, we see that
\begin{align*}
(X\xrightarrow{e} N,\, s)
&\xmapsto{\Lan F (f)} \big(Y\xrightarrow{e'} \overline{N+_X M},\, F(m^\op )F(j_N)(s)\big) \\
&\xmapsto{\mathmakebox[6.5ex]{\beta_Y}} G({e'}^\op )\gamma_{\overline{N+_X M}}F(m^\op )F(j_N)(s).
\end{align*}
On the other hand, we have
$$(X\xrightarrow{e} N, s)\xmapsto{\beta_X} G(e^\op )\gamma_N(s)\xmapsto{G(f)} G(f)G(e^\op )\gamma_N(s).$$
The naturality of $\gamma$, together with the fact that the cospans $fe^\op $ and ${e'}^\op m^\op j_N$ belong to the same isomorphism class, imply these two compositions yield the same result.

To prove uniqueness, assume there exists another natural transformation $\beta' \colon \Lan F\Rightarrow G$ such that $\btk[column sep=small] \beta'_X(X\rar[equal] & X, s)\etk =\gamma_X(s)$ for every $X$. Consider any other element $(X\xrightarrow{e} N,t)$ in $\Lan F X$; using the naturality of $\beta'$, and chasing the element $\btk[column sep=small] (N\rar[equal] & N, t)\etk$ along the commutative diagram
\[\btk
\Lan FN\rar["\beta'_N"]\dar["\Lan F (e^\op )"'] & GN\dar[" G(e^\op )"]\\
\Lan FX\rar["\beta'_X"'] & GX
\etk\] gives $$\beta'_X(X\xrightarrow{e} N,t)=G(e^\op )\gamma_N(t)$$ which is, by definition, equal to  $\beta_X(X\xrightarrow{e} N, t)$. This implies $\beta'_X=\beta_X$ for every $X$, which concludes our proof.
\end{proof}
\begin{proof}[Proof of Proposition \ref{Kan.on.maps}]
According to Proposition \ref{def.kan}, $\beta$ is the (unique) natural transformation such that
\[\btk[column sep=large]
\C\cp\M^\op \ar[dr,"F"]\dar[hookrightarrow,"\iota"', "\ \ \ \ \Downarrow\kappa" near end] &  \\
\Cospan(\C)\dar[hookrightarrow,"A"', "\ \ \ \ \Downarrow\beta" near start]\rar["\Lan F" description] & \Set\\
\Cospan(\C')\ar[ru,"\Lan F'"'] &
\etk \ \ = \ \
\btk
\C\cp\M^\op \ar[dr,"F"]\dar[hookrightarrow,"\iota"'] &  \\
\Cospan(\C)\rar[phantom, "\scriptstyle{\Downarrow\kappa'\alpha}" near start]\dar[hookrightarrow,"A"'] & \Set\\
\Cospan(\C')\ar[ru,"\Lan F'"'] &
\etk\]
Following the proof of Proposition \ref{prop.kanexplicit}, we have $G=\Lan F' A$ and $\gamma=\kappa'\alpha \colon F\Rightarrow \Lan F' A$, which is defined as
\[\btk[column sep=small]
\gamma_X(s)=\kappa'_{AX}\alpha_X(s)=(AX\rar[equal] & AX, \alpha_X(s)).
\etk\]
Thus, $\beta \colon \Lan F\Rightarrow\Lan F' A$ is given on its $X$-component by
\[\btk[column sep=small]
(X\xrightarrow{e} N, s\in FN)\mapsto \Lan F' A(e^\op )(AN\rar[equal] & AN, \alpha_N(s)).
\etk\]

To compute the right-hand side, we factor the right leg of the cospan $\id_{AN}Ae^\op =Ae^\op$ in $(\E',\M')$ as shown in the triangular diagram above, obtaining $Ae=m'e'$; then, the function $\Lan F' (Ae^\op )$ takes the element $\btk[column sep=small]
(AN\rar[equal] & AN, \alpha_N(s))
\etk$ to $$(AX\xrightarrow{e'} \overline{AN}, F'({m'}^\op )\alpha_N(s))$$ as claimed.
\end{proof}

\end{document}